\renewcommand*{\backref}[1]{}
\renewcommand*{\backrefalt}[4]{({%
		\ifcase #1 Not cited.%
		\or On p.~#2%
		\else On pp.~#2%
		\fi%
	})}
\crefname{subsection}{Subsection}{Subsection}
\newcommand{\setword}[2]{%
  \phantomsection
  #1\def\@currentlabel{\unexpanded{#1}}\label{#2}%
}
\DeclareMathAlphabet{\mathbbe}{U}{bbold}{m}{n}
\newcommand{\A}{\mathscr{A}}
\newcommand{\C}{\mathscr{C}}
\newcommand{\cC}{\mathcal{C}}
\newcommand{\D}{\mathscr{D}}
\newcommand{\F}{\mathscr{F}}
\newcommand{\cF}{\mathcal{F}}
\newcommand{\bF}{\mathbb{F}}
\newcommand{\bG}{\mathbb{G}}
\newcommand{\cL}{\mathcal{L}}
\newcommand{\bN}{\mathbb{N}}
\newcommand{\s}{\mathscr{S}}
\newcommand{\cM}{\mathcal{M}}
\newcommand{\bM}{\mathbb{M}}
\newcommand{\cS}{\mathcal{S}}
\newcommand{\bT}{\mathbb{T}}
\newcommand{\cT}{\mathcal{T}}
\newcommand{\cU}{\mathcal{U}}
\newcommand{\cV}{\mathcal{V}}
\newcommand{\cW}{\mathcal{W}}
\newcommand{\cP}{\mathcal{P}}
\newcommand{\bX}{\mathbb{X}}
\newcommand{\bY}{\mathbb{Y}}
\newcommand{\set}{\mathscr{S}\mathrm{et}}
\newcommand{\sset}{s\set}
\newcommand{\cat}{\mathscr{C}\mathrm{at}}
\newcommand{\Fun}{\mathrm{Fun}}
\newcommand{\Map}{\mathrm{Map}}
\newcommand{\Ho}{\mathrm{Ho}}
\newcommand{\id}{\mathrm{id}}
\newcommand{\Alg}{\mathcal{A}\mathrm{lg}}
\newcommand{\Algf}{\mathcal{A}\mathrm{lg}_{\textbf{\text{f}}}}
\newcommand{\Fib}{\mathcal{F}\mathrm{ib}}
\newcommand{\bFib}{\mathbb{F}\mathrm{ib}}
\newcommand{\Inst}{\mathcal{I}\mathrm{nst}}
\newcommand{\Mnd}{\mathcal{M}\mathrm{nd}}
\newcommand{\Sub}{\mathscr{S}\mathrm{ub}}
\newcommand{\End}{\mathcal{E}\mathrm{nd}}
\newcommand{\Grpd}{\mathscr{G}\mathrm{rpd}}
\newcommand{\Ty}{\mathbb{T}\mathrm{y}}
\newcommand{\Tm}{\mathbb{T}\mathrm{m}}
\newcommand{\Eq}{\mathrm{Eq}}
\newcommand{\idtoequiv}{\mathrm{idtoequiv}}
\newcommand{\colim}{\mathrm{colim}}
\newtheorem{theorem}[equation]{Theorem}
\newtheorem{lemma}[equation]{Lemma}
\newtheorem{proposition}[equation]{Proposition}
\newtheorem{corollary}[equation]{Corollary}
\theoremstyle{definition}
\newtheorem{definition}[equation]{Definition}
\newtheorem{example}[equation]{Example}
\theoremstyle{remark}
\newtheorem{remark}[equation]{Remark}
\newtheorem{notation}[equation]{Notation}
\numberwithin{equation}{section}
\title{Non-Standard Models of Homotopy Type Theory}
\date{August 2025}
\author{Nima Rasekh}
\address{Institut f{\"u}r Mathematik und Informatik, Universit{\"a}t Greifswald, Greifswald, Germany}
\email{nima.rasekh@uni-greifswald.de}
\subjclass[2020]{03B38, 03H05, 18N40, 18C50, 03G30, 18N60}
\keywords{Homotopy type theory, categorical semantics, model category theory, filter quotient construction, non-standard models}
\begin{document}

\begin{abstract}
	\emph{Homotopy type theory} is a modern foundation for mathematics that introduces the \emph{univalence axiom} and is particularly suitable for the study of homotopical mathematics and its formalization via proof assistants. In order to better comprehend the mathematical implications of homotopy type theory, a variety of models have been constructed and studied. Here a model is understood as a model category with suitable properties implementing the various type theoretical constructors and axioms. A first example is the simplicial model due to Kapulkin--Lumsdaine--Voevodsky \cite{kapulkinlumsdaine2021kanunivalent}. By now, many other models have been constructed, due to work of Arndt, Kapulkin, Lumsdaine, Warren and particularly Shulman, culminating in a proof that every Grothendieck $\infty$-topos can be obtained as the underlying $\infty$-category of a model category that models homotopy type theory \cite{arndtkapulkin2011modelstypetheory,shulman2015homotopycanonicity,lumsdainewarren2015localuniverses,lumsdaineshulman2020goodexcellent,shulman2019inftytoposunivalent}. 
	
	In this paper, we propose the filter quotient construction as a new method to construct further models of homotopy type theory. Concretely, we prove that with minor assumptions, the filter quotient construction \cite{rasekh2025filtermodelcat} preserves all model categorical properties individually that implement various type theoretical constructors and axioms. On the other hand, the filter quotient construction does not preserve many external properties that are of set theoretical nature, such as cocompleteness, local presentability or cofibrant generation. Combining these, the filter quotient construction preserves models of homotopy type theory and can result in models that have not been considered before and exhibit behaviors that diverge from any of the established models. 
\end{abstract}

\maketitle

\section{Introduction}

\subsection{From Homotopy Theory to Homotopy Type Theory}
Homotopy theory is a branch of mathematics that studies mathematical structures (topological spaces, chain complexes, categories, spectra, ...) up to some chosen notion of equivalence (homotopical equivalences, quasi-isomorphism, categorical equivalence, stable equivalence, ...). Historically, it first arose in topology, where topological spaces were considered up to (weak) homotopy equivalences. From these early days, homotopy theory faced a fundamental tension: homotopy equivalences of topological spaces are strictly weaker than bijections of the underlying sets. This means homotopy equivalences are not naturally part of the underlying mathematical foundation. As a result, first steps in algebraic topology and homotopy theory added those equivalences artificially to the structure, for example via localizations or via the construction of homotopy categories \cite{brown1965abstracthomotopy,gabrielzisman1967calculusfractions}. This situation suggested the need for pursuing and developing a foundation for mathematics more suitable for homotopical mathematics.

Since the 2010s, a new foundation has been developed, that can properly incorporate homotopical mathematics: \emph{homotopy type theory} \cite{hottbook2013}. In general, type theories are a foundation for mathematics with a particular focus syntax. This syntactic angle makes it a particularly suitable foundation for formalizing mathematics via proof assistants, such as Rocq or Lean \cite{coquandhuet1986coc}. Homotopy type theory is one such type theory that, via the \emph{univalence axiom}, axiomatically incorporates a notion of equivalence, making it the ideal setting for homotopy theory. As a result, in recent years many homotopical definitions and theorems have been developed in homotopy type theory, what is known as \emph{synthetic homotopy theory} or \emph{synthetic topology}. 

As already pointed out, the syntactical nature of type theories makes homotopy type theory also a suitable framework for the formalization of homotopical mathematics via proof assistants. Prominent examples includes the formalization of various results in topology (such as the Blakers-Massey theorem or the computation of homotopy groups) in proof assistants, using \emph{(cubical) Agda} and \emph{Rocq} \cite{coquandhuet1986coc,licatashulman2013fundamental,favoniafinsterlicatalumsdaine2016blakersmassey,vezzosimortbergabel2019cubicalagda,brunerieljungstrommortberg2022synthetic,ljungstrommortberg2023pi4s3}.

\subsection{Models of homotopy type theory}
While a type theoretical approach is ideal when working with computers and proof assistants, many mathematicians prefer a semantic approach. This has motivated the development of rigorous methods to translate between type theories and type theoretical constructors on the one side (syntax), and categorical structures with categorical universal properties on the other side (semantics). This is known as the \emph{syntax-semantics duality}. From this perspective, for a given type theory we construct categories that are the \emph{models of the type theory}, whereas on the other side, for a given category, we extract the type theory as the \emph{internal language} of the category. As a result there is now an extensive literature establishing precise equivalences between various type theories (such as Martin-L{\"o}f type theories, possibly with $\Pi$-types) and various categories (such as locally Cartesian closed categories or topoi) \cite{seely1983hyperdoctrines,seely1984locallycartesian,lambekscott1988higherorderlogic,hofmann1995lccc,curiengarnerhofmann2014categorical,clairambaultdybjer2014biequivalence}. This effort, in particular, includes a model of Martin-L{\"o}f type theory via groupoids, which is the first example where non-trivial equivalences (in this case for groupoids) arise \cite{hofmannstreicher1998groupoidtypes}.

Unfortunately, this translation between type theories and categorical models is not straightforward, and involves many technicalities. As a result many technical categorical tools have been introduced that can assist with this translation effort, and give us effective ways to construct categorical models out of type theories. This includes \emph{contextual categories} and \emph{categories with attributes} \cite{cartmell1986contexualcategories}, \emph{display map categories} \cite{taylor1987displaymap}, \emph{comprehension categories} \cite{jacobs1993comprehension}, \emph{categories with families} \cite{dybjer1996families}, and \emph{natural models} and \emph{natural pseudo-models} \cite{awodey2018natural}. Moreover, many of these approaches have proven to be equivalent in recent years \cite{kapulkinlumsdaine2018typetheories,ahrenslumsdainenorth2025semanticframeworks}. Each one of these different notions is defined as a category along with additional data, that captures the data that comes from the type theory. Using these precise technical tools, many further categorical models of various type theories have been constructed in recent years.

\subsection{Semantics via Model Categories}
The development outlined in the previous section suggests a natural solution when trying to better understand homotopy type theory: we pick one of the technical categorical methods for constructing models of type theories and apply it to the specific case of homotopy type theory. Unfortunately, the notion of equivalence that is inherent to homotopy type theory gives it a homotopical flavor, which suggests that any suitable model should also involve a notion of equivalence, making the construction and study of models of homotopy type theory particularly challenging. In the particular case of the groupoid model this issue was addressed ``by hand'', as natural equivalences that are equivalent are necessarily equal, naturally restricting the amount of coherences that needed to be addressed \cite{hofmannstreicher1998groupoidtypes}. However, for the general case, the required coherences to properly deal with equivalences are not restricted in any way. What is hence necessary is a strict category with a manageable notion of equivalence. Fortunately, here one can benefit from the pioneering work of Quillen.

Quillen originally introduced the notion of a \emph{model category}\footnote{The ``model'' in model category is not related to models of type theory, and arose independently as a historical coincidence.} as a category with an axiomatic notion of weak equivalence along with further structures (fibrations and cofibrations) that help effectively manage the behavior of weak equivalences in a model category \cite{quillen1967modelcats}. Model categories have found many applications in mainstream homotopy theory. As a result there is now an extensive literature permitting an effective construction and study of a variety of model categorical notions \cite{hovey1999modelcategories,hirschhorn2003modelcategories}. Beyond these advances, we now also have a very good understanding of the relation between model category theory and other categorical axiomatizations of the notion of equivalence, namely \emph{$\infty$-categories}. In particular, every model category comes with an \emph{underlying $\infty$-category}, which retains many important properties of the model category \cite{lurie2009htt}.

The combination of these two facts, namely the existence of additional challenges when pursuing models of homotopy type theory and the powerful ability of model categories to capture homotopical data, motivated several figures, such as Voevodsky and Awodey, to pursue an alternative intermediate step when constructing categorical models of type theories: instead of directly constructing a model for homotopy type theory, we first construct a model category, and then extract a model for homotopy type theory from it, using one of the technical categorical models discussed above \cite{awodeywarren2009identitytypes,awodeygarnermartinlofvoevodsky2011univalence,voevodsky2014origins}.

This approach was tremendously successful. The first success was the construction of the simplicial model of homotopy type theory, where the authors prove that the \emph{Kan model structure} on the category of simplicial sets (whose underlying $\infty$-category is the $\infty$-category of spaces), results in a contextual category that is a model for homotopy type theory \cite{kapulkinlumsdaine2021kanunivalent}. After that first success, building on several major developments due to Arndt, Kapulkin, Lumsdaine, Shulman and Warren \cite{arndtkapulkin2011modelstypetheory,shulman2015homotopycanonicity,shulman2015elegantunivalence,lumsdainewarren2015localuniverses,shulman2017eidiagrams,lumsdaineshulman2020goodexcellent}, Shulman proved that \emph{type-theoretic model topoi}, whose underlying $\infty$-categories recover all \emph{Grothendieck $\infty$-topoi}, give us a natural pseudo-model that indeed models homotopy type theory with all relevant axioms and constructors, including univalent universes \cite{shulman2019inftytoposunivalent}.\footnote{The model categorical approach has also been used in other contexts, such as \emph{cubical type theory} \cite{awodeycavallocoquandriehlsattler2024equivariant}. Moreover, there are further developments towards homotopical models of type theories, for example via \emph{weak algebraic factorization systems} \cite{gambinolarre2023models}} While all these results represent a significant step towards the classification of all models of homotopy type theory, many models remain unexplored, and concretely any model for which the underlying $\infty$-category is not a Grothendieck $\infty$-topos.

\subsection{New Semantics via Filter Quotients}
In this work we introduce a new method for constructing models of homotopy type theory via \emph{filter quotient model categories}. This results in models that heretofore had not been constructed and differ in their properties from the established models. The idea of using filters in mathematical logic to construct non-trivial models goes back to major figures such as Skolem \cite{skolem1934earlyultraproducts} and {\L}o{\'s} \cite{los1955ultraproduct}. Additionally, while not explicitly in the proof, an intuition regarding filter quotients plays an important role in Cohen's forcing construction, proving the independence of the continuum hypothesis \cite{cohen1963forcing}. This motivated Lawvere and Tierney to introduce topos theory as a categorical foundation for mathematics and to define therein \emph{filter quotients} as a way to construct new models \cite{lawvere1964elementarysets,tierney1972elementarycontinuum}. These techniques were further refined by Adelman and Johnstone \cite{adelmanjohnstone1982serreclasses}, and have since become a standard part of modern topos theory \cite{johnstone1977topostheory,maclanemoerdijk1994topos}. Recently, the filter quotient construction has also been generalized to $\infty$-categories \cite{rasekh2021filterquotient} and model categories \cite{rasekh2025filtermodelcat}. 

In order to understand how filter quotients help construct models of homotopy type theory, it is instructive to first review relevant aspects of the filter quotient construction. A filter is defined as a certain subset of a poset. Given a model category $\cM$ and a suitable filter in the poset of subterminal objects in $\cM$, denoted $\Phi$, we obtain a new model category $\cM_\Phi$, along with a projection functor $P_\Phi\colon \cM \to \cM_\Phi$ \cite{rasekh2025filtermodelcat}. With this result at hand, the major aim of this paper is to state and prove a precise version of the following meta-statement:
\begin{align} \label{slogan}
 \text{\emph{\textbf{\parbox{13.5cm}{The functor $P_\Phi$ preserves enough model categorical properties, so that the filter quotient model category $\cM_\Phi$ models the same type theoretic constructors and axioms as $\cM$.}}}}
\end{align}
Here the word ``enough'' is important, as the projection functor will certainly not preserve all properties, such as infinite (co)limits and cofibrantly generated model structures, necessitating a careful analysis. We tackle this problem in three steps:

\begin{enumerate}[leftmargin=*]
	\item In \cref{not:main} we summarize all relevant model categorical properties required to model type theoretical constructors via model categories.
	\item In \cref{thm:prop} we precisely prove which one of the properties are preserved by the projection functor.
	\item In \cref{thm:main} we prove that the preservation properties in \cref{thm:prop} suffice to model the same type theoretic constructors.
\end{enumerate}

\subsection{New Models and Their Implications}
Establishing this result immediately implies that anything proven in homotopy type theory is valid in more situations than previously anticipated. This includes the examples already mentioned, but also other results, such as the \emph{Hurewicz theorem} \cite{christensenscoccola2023hurewicz}. Beyond this general observation, this approach has concrete implications regarding our understanding of homotopy type theory.
\begin{enumerate}[leftmargin=*]
	\item We can immediately observe that the universal property of infinite coproducts cannot be expressed in the internal language of homotopy type theory (\cref{cor:independent infinite colimit}).
	\item We can observe that while all models of homotopy type theory have a natural number object, the size of the set of natural numbers can vary significantly between models, and models of homotopy type theory can have \emph{non-standard natural numbers} (\cref{cor:independent standard nno}).
	\item We can observe the existence of models of homotopy type theory that are well-pointed, meaning generated by the terminal object, but not equivalent to the $\infty$-category of spaces, which was an impossibility with previously known models (\cref{prop:wellpointed unique}). Motivated by other constructions in logic, such as \cite{palmgren1997nonstandardanalysis}, such a model can reasonably be considered a \emph{non-standard model of homotopy type theory} or a \emph{non-standard model for spaces} (\cref{ex:nonstandard model}).
\end{enumerate}
Fundamentally all of these examples are a concrete manifestation of the fact that in certain models a notion of ``infinity'' internal to homotopy type theory diverges from a notion of ``infinity'' from the model. This observation is in fact crucial when studying $\infty$-category theory in different foundations (i.e. internal to homotopy type theory and $\infty$-topos theory), which is the aim of \cite{rasekh2025shott}. See \cref{rem:infinity} for a detailed discussion.

\subsection{Acknowledgment}
I would like to thank Niels van der Weide for his careful reading and helpful comments on the first draft of this paper. I am also grateful to the Max Planck Institute for Mathematics in Bonn for its hospitality and financial support. Moreover, I am grateful to the Hausdorff Research Institute for Mathematics in Bonn, Germany, for organizing the trimester ``Prospects of Formal Mathematics,'' funded by the Deutsche Forschungsgemeinschaft (DFG, German Research Foundation) under Germany's Excellence Strategy – EXC-2047/1 – 390685813, which resulted in many fruitful interactions and relevant conversations regarding foundation and type theory.

\section{Filter Quotient Models of Homotopy	Type Theory}\label{sec:main}
In this section we review the relevant background (regarding model categories and filter quotients) and present the main results. The proofs have been relegated to \cref{sec:technicalities}.

\subsection{Model Categories and Filter Quotients} \label{subsec:model cat}
In this short subsection we review model categorical terminology and how they relate to filter quotients. 

\begin{definition}
	A model structure on a category $\C$ is a triple $(\cF,\cC,\cW)$ of classes of morphisms in $\cM$ satisfying the following axioms:
	\begin{itemize}[leftmargin=*]
		\item $(\cC \cap \cW, \cF)$ and $(\cC, \cF \cap \cW)$ are weak factorization systems.
		\item If two of $f$, $g$, and $g \circ f$ are in $\cW$, so is the third.
	\end{itemize}
	A model category is a tuple $(\C,\cM)$ of a category $\C$, that is either finitely (co)complete or small (co)complete, and $\cM$ is a model structure on $\C$. 
\end{definition}

\begin{remark}
	The first definition of a model category, due to Quillen, only assumed finite (co)limits \cite{quillen1967modelcats}. However, more modern references assume small (co)limits \cite{hovey1999modelcategories,hirschhorn2003modelcategories}. In the next subsection we will see examples of both, hence we will not assume either and always mention whether a model category is finitely or small (co)complete.
\end{remark}

We now review how to construct new model structures via filter quotients.

\begin{definition}
	Let $(P, \leq)$ be a poset. A \emph{filter} on $P$ is a non-empty subset of $P$, which is:
	\begin{itemize}[leftmargin=*]
		\item Upwards closed: for all $x \in \Phi$ and $y \geq x$, $y \in \Phi$.
		\item Intersection closed: for all $x,y$ in $\Phi$, there exists a $z$ in $\Phi$, with $z \leq x, y$.
	\end{itemize}
\end{definition}

Recall that an object $U$ in a category $\C$ is subterminal, if for every object $X$ in $\C$, there is at most one morphism from $X$ into $U$. 

\begin{definition}
	Let $\C$ be a category. A \emph{filter of subterminal objects} on $\C$ is a filter on the poset of subterminal objects $\Sub(\C)$.
\end{definition}

\begin{definition} \label{def:filter quotient category}
 Let $\C$ be a category with finite products and $\Phi$ a filter of subterminal objects. Then the filter quotient $\C_\Phi$ is a category with the same objects as $\C$, and for two objects $X, Y$ 

	\[\C_\Phi(X,Y)=\left(\coprod_{U\in\Phi}\C(X\times U,Y)\right)\mathbin{\big/\mkern-6mu\sim_\Phi}\]
	where $f\colon U \times X \to Y$, $g\colon V \times X \to Y$ are equivalent if there exists a $W \in \Phi$, with $W \leq U, V$, and $f$ and $g$ are equal when restricted to $W$.
\end{definition}

There is an evident \emph{projection functor} $P_\Phi\colon \C \to \C_\Phi$, which is the identity on objects and sends each morphism to its equivalence class, and preserves many properties of interest.

\begin{proposition} \label{prop:filter quotient category}
	Let $\C$ be a category with finite products, $\Phi$ a filter of subterminal objects.
	\begin{enumerate}[leftmargin=*]
		\item $P_\Phi$ preserves finite (co)limits, monomorphisms, (local) exponentiability, subobject classifiers, and natural number objects.
		\item If $\C$ is (locally) Cartesian closed, an elementary topos, or simplicially enriched, then so is $\C_\Phi$.
		\item If $\C$ is a simplicial filter, meaning $U$ in $\Phi$, $K$ a finite simplicial set and $X$ in $\cM$, we have $(X \otimes K) \times U = (X \times U) \otimes K$, and $\C$ is (co)tensored over finite simplicial sets, then $\C_\Phi$ is (co)tensored over finite simplicial sets.
	\end{enumerate} 
\end{proposition}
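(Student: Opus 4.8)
The plan is to verify the three claimed preservation statements about $P_\Phi$ one by one, relying throughout on the explicit description of $\C_\Phi$ in \cref{def:filter quotient category} together with the observation that for any fixed $U \in \Phi$ the functor $U \times (-)\colon \C \to \C$ is right adjoint to nothing in general but does preserve the relevant structure when $U$ is subterminal --- indeed $U \times (-)$ is the composite of pullback along $U \to 1$ with the forgetful functor $\C/U \to \C$, so on subterminal $U$ it behaves like a restriction to an open subobject. The key unifying principle is that a morphism in $\C_\Phi$ is represented on \emph{some} member of the filter, and two representatives agreeing on a common refinement are identified; since $\Phi$ is intersection-closed, any finite diagram of morphisms in $\C_\Phi$ can be represented simultaneously on a single $U \in \Phi$. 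This ``common refinement'' trick reduces essentially every verification to a statement about $\C$ itself, fibered over the slice $\C/U$.

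First I would treat the finite (co)limit claim in part (1): given a finite diagram in $\C_\Phi$, lift all its arrows to representatives over a common $U\in\Phi$, form the (co)limit of the resulting diagram in $\C/U$ (which exists and is computed as in $\C$ since $U$ is subterminal, so $\C/U$ has the same finite (co)limits as $\C$ with apex twisted by $U$), and check the universal property after projecting --- uniqueness of the mediating morphism uses exactly the $\sim_\Phi$ relation. For monomorphisms, $P_\Phi(f)$ is monic iff whenever $f\circ g \sim_\Phi f \circ h$ then $g \sim_\Phi h$, which after passing to a common refinement $W$ reduces to $f$ being monic in $\C$ (here one uses that $U \times (-)$ preserves monos, true since it preserves pullbacks). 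For (local) exponentiability, subobject classifiers, and natural number objects, the strategy is the same: these are each characterized by a (finite-limit) universal property, the representing objects in $\C$ are sent to candidates in $\C_\Phi$, and one checks the universal property using common refinements; for the subobject classifier one additionally notes $P_\Phi$ preserves monos and that subobjects of $X$ in $\C_\Phi$ biject with $\Phi$-germs of subobjects of $X\times U$ in $\C$, which stabilize. Part (2) is then essentially formal: (local) Cartesian closure and being an elementary topos are conjunctions of the structures in part (1) (finite limits, (local) exponentials, subobject classifier), and simplicial enrichment transports because $\C_\Phi(X,Y)$ for the enriched hom is built levelwise from $\C(X \times U \times \Delta^n, Y)$ and the simplicial identities are preserved by $P_\Phi$ on the nose.

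For part (3), the hypothesis that $\Phi$ is a \emph{simplicial filter}, $(X\otimes K)\times U \cong (X\times U)\otimes K$ naturally, is precisely what is needed to make the tensor descend: define $[X]\otimes K := [X \otimes K]$ on $\C_\Phi$ and check this is well-defined on $\sim_\Phi$-classes and functorial, using the compatibility isomorphism to move the $\times U$ past the $\otimes K$; the adjunction $\C_\Phi(X\otimes K, Y) \cong \sset_{\mathrm{fin}}(K, \C_\Phi(X,Y))$ then follows by taking the colimit over $U\in\Phi$ of the corresponding adjunction isomorphisms in $\C$, which commute with the filtered colimit since $K$ is finite. The cotensor is dual, using that the cotensor $Y^K$ also commutes with $\times U$ up to the analogous canonical isomorphism (which one derives from the tensor compatibility by adjointness). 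The main obstacle I anticipate is bookkeeping rather than conceptual: making sure that all the canonical isomorphisms (associativity/unit of $\times$, the interchange of $\times U$ with $\otimes K$, the simplicial identities) are chosen \emph{coherently} so that the descended structures genuinely satisfy the axioms on the nose rather than merely up to further filter-germ equivalence --- in particular verifying that the colimit-over-$\Phi$ of a family of adjunction isomorphisms is again an adjunction isomorphism, which requires checking naturality squares commute after the filtered colimit. Because $\Phi$ is directed (intersection-closed), these filtered colimits are well-behaved and the checks go through, but they must be done carefully for each piece of structure.
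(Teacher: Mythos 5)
Your overall strategy (represent finitely many morphisms over a common $U\in\Phi$, verify universal properties in $\C/U\simeq\C_U$, pass to germs) is exactly the content of the results the paper invokes: the paper itself gives no direct argument but cites \cite[Example A.2.1.13]{johnstone2002elephanti} and \cite[Lemma 4.12]{rasekh2025filtermodelcat}, which prove these statements by viewing $\C_\Phi$ as a filtered colimit of the slices $\C_U$ along the functors $-\times V$ (cf.\ \cref{lemma:filtered colimits filter quotients}). Your treatment of limits, monomorphisms, exponentials (note $\C_\Phi(Z\times X,Y)$ and $\C_\Phi(Z,Y^X)$ are literally the same filtered colimit of hom-sets), and of part (3) via finiteness of $K$ commuting with the filtered colimit over $\Phi$, matches those proofs; one small correction: $-\times U$ is not ``right adjoint to nothing'' --- it is right adjoint to the forgetful functor $\C/U\to\C$, which is precisely why it preserves pullbacks and monos, as you use.

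The genuine gap is in the finite \emph{colimit} step. To show that $P_\Phi$ sends a colimit cocone $X_i\to C$ to a colimit cocone, you lift a cocone in $\C_\Phi$ to a strict cocone $X_i\times W\to Y$ in $\C$; producing (and proving uniqueness of) the mediating morphism $C\to Y$ in $\C_\Phi$ then requires the canonical map $\colim_i(X_i\times W)\to C\times W$ to be invertible, i.e.\ requires $-\times W$ to preserve the colimit. This does not follow from ``finite products and $W$ subterminal'': $-\times W$ is a right adjoint, not a left adjoint. Indeed, for the non-distributive lattice $M_3=\{0,a,b,c,1\}$ viewed as a category (all objects subterminal) with filter $\Phi=\{a,1\}$, one has $b\vee c=1$ in $\C$, while in $\C_\Phi$ the objects $b,c$ become isomorphic to $0$ and $P_\Phi(1)\not\cong 0$, so the coproduct is not preserved. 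So your phrase ``computed as in $\C$ with apex twisted by $U$'' silently assumes exactly the missing hypothesis (and the same caveat applies to the natural number object, whose universal property is colimit-like). The paper avoids this by citing the topos-theoretic results, where $-\times U$ is the inverse image of an open subtopos inclusion and hence a left adjoint; in every application in the paper ($\C$ locally Cartesian closed, or a topos) the needed preservation holds. To repair your sketch you should either add that hypothesis explicitly or restrict, as the citations do, to the settings in which $-\times U$ preserves finite colimits.
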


\begin{proof}
	The first item is proven in \cite[Example A.2.1.13]{johnstone2002elephanti}, see also \cite[Section 9.4]{johnstone1977topos}. Except for the simplicial enrichment, the second item follows directly from the first item.  The simplicial enrichment and co(tensor) is proven in \cite[Lemma 4.12]{rasekh2025filtermodelcat}. 
\end{proof}

In general this definition does not interact well with model structures, necessitating adjusting the definition. Following \cite[Definition 3.9]{rasekh2025filtermodelcat}, for a given model category $\cM$ and filter of subterminal objects $\Phi$, a class of morphisms $S$ in $\cM$ is $\Phi$-product stable, if for every $f$ in $S$ and $U$ in $\Phi$, $f \times U$ is in $S$.

\begin{definition}
	Let $\cM$ be a model category. A \emph{model filter} $\Phi$ on $\cM$ is a filter of subterminal objects with the following properties:
	\begin{enumerate}[leftmargin=*]
		\item Every $U$ in $\Phi$ is fibrant.
		\item The cofibrations and weak equivalences in $\cM$ are $\Phi$-product stable.
		\end{enumerate}
		Additionally, $\Phi$ is called a \emph{simplicial model filter } if it is a model filter that is also simplicial.
\end{definition}

With this definition we do have the expected result. 

\begin{notation}
	Let $\C$ be a category with finite products and $\Phi$ a filter of subterminal objects. Let $S$ be a $\Phi$-product stable set of morphisms. Let $S_\Phi$ be the set of morphisms in $\C_\Phi$ with the property that $f \in S_\Phi$ if there exists $U \in \Phi$ such that $f \times U \in S$.
\end{notation}

\begin{theorem}[{\cite[Theorem 3.16, Corollary 4.17]{rasekh2025filtermodelcat}}] \label{thm:filter quotient model structure}
	Let $\cM$ be a model category and $\Phi$ a model filter on $\cM$. 
\begin{enumerate}[leftmargin=*]
	\item The filter quotient $\cM_\Phi$ carries a model structure given by $(\cF_\Phi,\cC_\Phi,\cW_\Phi)$. In particular, $P_\Phi\colon\cM \to \cM_\Phi$ preserves fibrations, cofibrations, weak equivalences, and right properness.
	\item If $\Phi$ is a simplicial model filter, then $\cM_\Phi$ is a simplicial model category, and $P_\Phi$ preserves the simplicial enrichment. 
\end{enumerate}
\end{theorem}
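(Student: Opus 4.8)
The plan is to build the model structure on $\cM_\Phi$ by transporting the one on $\cM$ along $P_\Phi$, taking $\cF_\Phi$, $\cC_\Phi$, $\cW_\Phi$ to be the classes produced by the operation $S \mapsto S_\Phi$ of the Notation above, so that a map of $\cM_\Phi$ lies in one of them exactly when it admits a representative in the corresponding class of $\cM$ after shrinking the indexing object in $\Phi$. First I would record the basic structural facts: by \cref{prop:filter quotient category} the category $\cM_\Phi$ is finitely bicomplete and $P_\Phi$ preserves finite (co)limits, and for every $U \in \Phi$ the functor $(-)\times U\colon \cM \to \cM$ preserves fibrations and trivial fibrations (these are stable under pullback, here along $Y \times U \to Y$) as well as cofibrations and weak equivalences (by the model-filter hypothesis); hence all four classes $\cC$, $\cF$, $\cC\cap\cW$, $\cF\cap\cW$ are $\Phi$-product stable and $P_\Phi$ sends each into the corresponding $(-)_\Phi$ class. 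I would also isolate a ``shrinking lemma'': if $W \leq U$ in $\Phi$ then $W \cong W\times U$, so $f\times W \cong (f\times U)\times W$; combined with product stability this shows that a representative lying in, say, $\cC$ remains in $\cC$ after any further restriction, and that $\cC_\Phi \cap \cW_\Phi = (\cC\cap\cW)_\Phi$ and $\cF_\Phi \cap \cW_\Phi = (\cF\cap\cW)_\Phi$ — the compatibility one needs even to state the weak factorization system axioms.

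Second, the two-out-of-three axiom for $\cW_\Phi$. Given composable morphisms $[g],[f]$ in $\cM_\Phi$, I would use intersection-closure of $\Phi$ to pick representatives over a common $W \in \Phi$ and, after one further restriction, arrange that the Kleisli-style composite $[g]\circ[f]$ is represented by an honest composite $\tilde g \circ \tilde f$ in $\cM$; then two-out-of-three in $\cM$ transfers directly, using the shrinking lemma to compare the various representatives. The same ``lift finitely many maps and a finite diagram to $\cM$ over a common $U \in \Phi$'' technique, applied to retract diagrams, shows that each of $\cC_\Phi$, $\cF_\Phi$, $\cW_\Phi$ is closed under retracts.

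Third, the two weak factorization systems $(\cC_\Phi \cap \cW_\Phi,\, \cF_\Phi)$ and $(\cC_\Phi,\, \cF_\Phi\cap\cW_\Phi)$. For the factorizations: a map $h$ of $\cM_\Phi$ has a representative $\tilde h\colon U\times X \to Y$ in $\cM$; factor $\tilde h$ there, apply $P_\Phi$, and use that $U$ becomes the terminal object of $\cM_\Phi$ (it is subterminal and $U \in \Phi$, so it receives a unique map from every object), whence $P_\Phi(U\times X)\cong P_\Phi X$ and the image factorization is a factorization of $h$ of the required type. For the lifting properties: a lifting problem in $\cM_\Phi$ between representatives chosen over a common $U\in\Phi$ is the $P_\Phi$-image of a lifting problem in $\cM$, and a solution there maps to a solution here, giving $\cC_\Phi \cap \cW_\Phi$ the left lifting property against $\cF_\Phi$ and $\cC_\Phi$ the left lifting property against $\cF_\Phi\cap\cW_\Phi$; the reverse inclusions follow from the standard retract argument using the factorizations together with closure under retracts. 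This simultaneously yields that $P_\Phi$ preserves fibrations, cofibrations and weak equivalences (built into the definitions) and right properness: a pullback of a $\cW_\Phi$-map along an $\cF_\Phi$-map, lifted to $\cM$ over a common $U$, is the $P_\Phi$-image of a pullback square in $\cM$ (as $P_\Phi$ preserves finite limits), so right properness of $\cM$ passes up. Finally, for part (2), \cref{prop:filter quotient category}(3) already gives that $\cM_\Phi$ is (co)tensored over finite simplicial sets when $\Phi$ is simplicial, and the pushout--product (SM7) axiom is verified by lifting the cofibrations involved to $\cM$, using the simplicial-filter identity $(X\otimes K)\times U = (X\times U)\otimes K$ so that tensoring commutes with restriction along elements of $\Phi$, and using that $P_\Phi$ preserves the finite colimits defining the pushout--product map.

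I expect the main obstacle to be the coherence of the whole package in the third step, specifically the reverse inclusions: showing that $\cF_\Phi$ is \emph{exactly}, not merely contained in or containing, the class of maps with the right lifting property against $\cC_\Phi \cap \cW_\Phi$, and likewise for $\cC_\Phi$. The difficulty is bookkeeping: a morphism of $\cM_\Phi$ may only acquire good representatives after passing to smaller elements of $\Phi$, so every argument — composition, retracts, lifting, factorization — must be carried out over a commonly chosen $U$ and then shown to be independent of that choice. Upward closure and intersection-closure of the filter, together with $\Phi$-product stability, are precisely what make these choices cohere, but pinning down the bookkeeping carefully is the part that needs real work rather than routine calculation.
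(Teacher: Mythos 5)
This theorem is not proved in the paper at all: it is imported verbatim from \cite[Theorem 3.16, Corollary 4.17]{rasekh2025filtermodelcat}, so there is no internal proof to compare your argument against. That said, your outline is the route one would expect that reference to take, and it is the one the present paper implicitly relies on later (e.g.\ the filtered-colimit description of $\cM_\Phi$ in \cref{lemma:filtered colimits filter quotients}): define the three classes by shrinking representatives, use filteredness of $\Phi$ to lift finite data (composites, retract diagrams, lifting squares, pullback squares) to a common stage $U\in\Phi$, use $\Phi$-product stability (automatic for $\cF$ and $\cF\cap\cW$ by pullback stability, assumed for $\cC$ and $\cW$) so that representatives stay in their classes under further restriction, and obtain factorizations by factoring a representative in $\cM$ and using that $P_\Phi(U)$ is terminal in $\cM_\Phi$. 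I see no error in this outline, and the retract-argument treatment of the reverse inclusions for the two weak factorization systems is the right way to close the loop.

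Two caveats you should address. First, your argument never invokes the hypothesis that every $U\in\Phi$ is fibrant, which is part of the definition of a model filter; either locate where it is actually needed (plausibly in part (2) or in compatibility statements about fibrant objects used downstream) or state explicitly that your proof of part (1) does not use it — as written the omission is unexplained, and an unused hypothesis in a blind reconstruction is a warning sign that a step may be hiding a subtlety. Second, for part (2) be precise about what ``simplicial model category'' can mean here: by \cref{ex: filter product n} filter quotients generally lack infinite (co)limits, and \cref{prop:filter quotient category} only provides (co)tensors over \emph{finite} simplicial sets, so your pushout--product verification only covers finite $K$. You should either check SM7 in its mapping-space form, using $\Map_{\cM_\Phi}(X,Y)=\colim_{U\in\Phi}\Map_{\cM}(X\times U,Y)$ together with the fact that filtered colimits of (trivial) Kan fibrations are (trivial) Kan fibrations, or note explicitly that the corner-map criterion with the finite generating cofibrations $\partial\Delta^n\to\Delta^n$ of simplicial sets suffices; otherwise the enrichment half of the statement is not fully justified.
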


Finally, let us observe one particularly relevant example: filter products.

\begin{example} \label{ex:filter product}
	Let $\cM$ be a model category and $I$ a set and $\Phi$ a filter of subsets of $I$. The \emph{filter product} $(\prod_I \cM)_\Phi$ is the filter quotient of $\prod_I\cM$, and usually denoted $\prod_\Phi\cM$. By \cite[Lemma 6.3]{rasekh2025filtermodelcat}, if $\cM$ has a strict initial object, then, $\prod_\Phi \cM$ carries the filter product model structure, with the same properties as in \cref{thm:filter quotient model structure}. In particular, by \cite[Corollary 6.10]{rasekh2025filtermodelcat}, if $\cM$ is a simplicial model category, then $\Phi$ is a simplicial model filter, and $\prod_\Phi \cM$ is a simplicial model category.
\end{example}

Let us end with an example of the example. For this next example recall that a filter on a set $I$ is \emph{principal} if it is of the form $\{J \subseteq I | x \in J\}$ for some fixed $x \in I$, and \emph{non-principal} otherwise.

\begin{example} \label{ex: filter product n}
	Let $\cM$ be a model category with strict initial object. Let $\bN$ be the set of natural numbers, and $\Phi$ a non-principal filter on $\bN$. Then the filter product $\prod_\Phi \cM$ is a model category without infinite coproducts, and hence in particular also not locally presentable or cofibrantly generated. See \cite[Example D.5.1.7]{johnstone2002elephantii} for a more detailed discussion.
\end{example}

\subsection{Filter Quotients model Homotopy Type Theory} \label{subsec:main}
We are now ready to state the main results and realize the vision outlined in \ref{slogan}. First of all we list all relevant model categorical properties considered in the literature \cite{arndtkapulkin2011modelstypetheory,shulman2015homotopycanonicity,lumsdaineshulman2020goodexcellent,shulman2019inftytoposunivalent}.

\begin{notation} \label{not:main}
	We use the following notational conventions regarding model categorical properties.
	\begin{itemize}[leftmargin=*]
		  \item \hypertarget{item:flc}{} \textbf{FLC}: The underlying category has \textbf{F}inite \textbf{L}imits and \textbf{C}olimits.
				\item \hypertarget{item:lc}{} \textbf{LC}: The underlying category has small \textbf{L}imits and \textbf{C}olimits.
				\item \hypertarget{item:gt}{} \textbf{GT}: The underlying category is a \textbf{G}rothendieck \textbf{T}opos\footnote{A \emph{Grothendieck topos} is a left-exact localization of a presheaf category. For more details see \cite[Section 3]{maclanemoerdijk1994topos}.}.
				\item \hypertarget{item:et}{} \textbf{ET}: The underlying category is an \textbf{E}lementary \textbf{T}opos\footnote{An \emph{elementary topos} is a locally Cartesian closed category with subobject classifier. For more details see \cite[Section 4]{maclanemoerdijk1994topos}.}.
				\item \hypertarget{item:lp}{} \textbf{LP}:	The underlying category is \textbf{L}ocally \textbf{P}resentable\footnote{A \emph{locally presentable category} is an accessible localization of a presheaf category. For more details see \cite{adamekrosicky1994presentable}.}.
				\item \hypertarget{item:lcc}{} \textbf{LCC}: The underlying category is \textbf{L}ocally \textbf{C}artesian \textbf{C}losed\footnote{A finitely complete category $\C$ is \emph{locally Cartesian closed category} if for all $f\colon X \to Y$ the pullback functor $f^*\colon\C_{/Y} \to \C_{/X}$ has a right adjoint.}.
				\item \hypertarget{item:slc}{} \textbf{SLC}: The	underlying category is \textbf{S}implicially \textbf{L}ocally \textbf{C}artesian closed\footnote{A locally Cartesian closed category $\C$ is \emph{simplicially locally Cartesian closed} if $f^*$ is a simplicially enriched left adjoint.}. 
				\item \hypertarget{item:rp}{} \textbf{RP}: The model structure is \textbf{R}ight \textbf{P}roper.
				\item \hypertarget{item:cg}{} \textbf{CG}: The model structure is \textbf{C}ofibrantly \textbf{G}enerated\footnote{A model structure is \emph{cofibrantly generated} if there is a set of (trivial) cofibrations generating all cofibrations and hence (via lifting property) determining all (trivial) fibrations. For a precise definition see \cite[Section 2.1]{hovey1999modelcategories}}.
				\item \hypertarget{item:s}{} \textbf{S}: The model structure is \textbf{S}implicial.
				\item \hypertarget{item:cim}{} \textbf{CIM}: The \textbf{C}ofibrations in the model structure \textbf{I}nclude the \textbf{M}onomorphisms.
				\item \hypertarget{item:cem}{} \textbf{CEM}: The \textbf{C}ofibrations in the model structure are \textbf{E}qual to the \textbf{M}onomorphisms.
				\item \hypertarget{item:fe}{} \textbf{FE}: \textbf{F}ibrations in the model structure are \textbf{E}xponentiable\footnote{A fibration $p\colon Y \to X$ in $\cM$ is \emph{exponentiable}, if $p^*\colon\cM_{/X} \to \cM_{/Y}$ has a right adjoint.}.
				\item \hypertarget{item:tcp}{} \textbf{TCP}: \textbf{T}rivial \textbf{C}ofibrations in the model structure are closed under \textbf{P}ullbacks along fibrations.
				\item \hypertarget{item:cl}{} \textbf{CL}: \textbf{C}ofibrations in the model category are stable under all \textbf{L}imits that exist.
				\item \hypertarget{item:f}{} \textbf{F}: There is a locally representable and relatively acyclic notion of \textbf{F}ibred structure $\bF$ such that $|\bF|$ is the class of all fibrations.\footnote{Defined in \cite[Definition 3.1, Definition 3.10, Definition 5.11]{shulman2019inftytoposunivalent}. See \cref{def:lparanofscaf} for a more detailed review.}
				\item \hypertarget{item:ia}{} \textbf{IA}: The model structure admits a notion of cell monad, and for every cell monad with parameters, its category of algebras has weakly stable typal \textbf{I}nitial \textbf{A}lgebras with representable lifts.\footnote{See \cref{def:ia} for a precise definition.}
				\item \hypertarget{item:u}{} \textbf{U}: The model structure has sufficient strict univalent \textbf{U}niverses.\footnote{See \cref{def:u} for a precise definition.}
	\end{itemize}
\end{notation}

Next we analyze which properties are preserved by the filter quotient construction.

\begin{theorem} \label{thm:prop}
	Let $\cM$	be a model category and $\Phi$ a model filter on $\cM$.
	\begin{itemize}[leftmargin=*]
		\item The functor $P_\Phi\colon	\cM \to \cM_\Phi$ preserves the following model categorical properties: \hyperlink{item:flc}{\textbf{FLC}}, \hyperlink{item:et}{\textbf{ET}}, \hyperlink{item:lc}{\textbf{LC}}, \hyperlink{item:slc}{\textbf{SLC}}, \hyperlink{item:rp}{\textbf{RP}}, \hyperlink{item:cim}{\textbf{CIM}},	\hyperlink{item:cem}{\textbf{CEM}}, \hyperlink{item:fe}{\textbf{FE}}, \hyperlink{item:tcp}{\textbf{TCP}}, and \hyperlink{item:cl}{\textbf{CL}}.
		\item If $\Phi$ is also a simplicial model filter, then $P_\Phi$ also preserves \hyperlink{item:s}{\textbf{S}}, \hyperlink{item:ia}{\textbf{IA}}, and \hyperlink{item:u}{\textbf{U}}.
	\end{itemize}
\end{theorem}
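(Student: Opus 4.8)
The plan is to group the fourteen conditions into three tiers according to how far they sit from the already-established \cref{prop:filter quotient category} and \cref{thm:filter quotient model structure}. The first tier consists of conditions that are essentially restatements of those results: \textbf{FLC} and \textbf{ET} are read off from \cref{prop:filter quotient category}, \textbf{RP} and, under the simplicial hypothesis, \textbf{S} are read off from \cref{thm:filter quotient model structure}, and \textbf{LC} together with its simplicial refinement \textbf{SLC} come from \cref{prop:filter quotient category}(2) once one records the standard fact that for every object $X$ the slice $(\cM_\Phi)_{/X}$ is again a filter quotient — of $\cM_{/X}$ along the image of $\Phi$ — the only residual point being to check that the enrichment of $f^{*}$ and of its right adjoint is the one transported along the projection.

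The second tier — \textbf{CIM}, \textbf{CEM}, \textbf{FE}, \textbf{TCP}, \textbf{CL} — rests on a single technical lemma, available from \cite{rasekh2025filtermodelcat} or reproved directly: a morphism of $\cM_\Phi$ lies in $\cF_\Phi$, in $\cC_\Phi$, or in $\cW_\Phi$, or is a monomorphism, exactly when for some $U \in \Phi$ it is represented — via the canonical isomorphism $X \times U \cong X$ valid in $\cM_\Phi$ — by a morphism of $\cM$ that lies in $\cF$, in $\cC$, in $\cW$, or is a monomorphism, respectively; the non-formal direction uses that the objects of $\Phi$ are fibrant and that $\cC$ and $\cW$ are $\Phi$-product stable, the class $\cF$ being automatically $\Phi$-product stable since $f \times U$ is a pullback of $f$ along a projection. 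Granting the lemma: \textbf{CIM} and \textbf{CEM} hold because a monomorphism of $\cM_\Phi$ restricts to a monomorphism of $\cM$, hence by hypothesis to a cofibration of $\cM$, whose projection is the original morphism, with the reverse inclusion in \textbf{CEM} coming from \cref{prop:filter quotient category}; \textbf{FE} holds because a fibration of $\cM_\Phi$ is, up to isomorphism, the projection of an exponentiable fibration of $\cM$ and $P_\Phi$ preserves local exponentiability; and \textbf{TCP} and \textbf{CL} follow by lifting the relevant finite diagram of $\cM_\Phi$ to a diagram of $\cM$ over a common $U \in \Phi$ — legitimate because $P_\Phi$ preserves and reflects those finite limits — applying the hypothesis on $\cM$, and projecting back, using preservation of fibrations, trivial cofibrations and finite limits.

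For the third tier — \textbf{IA} and \textbf{U} under the simplicial model filter hypothesis — the strategy is to transport the relevant structure across $P_\Phi$ by taking $\Phi$-germs of the corresponding structure on representatives, and then to verify that each defining adjective survives. Thus a notion of fibred structure, a cell monad with parameters, or a universe on $\cM_\Phi$ is produced from one on $\cM$; its class of underlying fibrations is identified with all fibrations of $\cM_\Phi$ via the Tier-2 lemma; local representability is inherited because a classifier of the structure over $\cM$ (which exists by hypothesis) is carried by $P_\Phi$ to a classifier over $\cM_\Phi$, its universal property being expressed through finite limits and, in \cite{shulman2019inftytoposunivalent}, the simplicial enrichment — and here the simplicial model filter hypothesis is exactly what makes that enrichment available on $\cM_\Phi$; relative acyclicity is inherited because $P_\Phi$ preserves trivial cofibrations, fibrations, and the acyclic comparison maps witnessing it; and for \textbf{U}, a strict univalent universe $\pi$ of $\cM$ goes to $P_\Phi(\pi)$, which is a fibration classifying precisely the fibrations of $\cM_\Phi$ admitting a $\pi$-small representative — so the family stays sufficient by the Tier-2 lemma — and which is univalent because univalence says that a map assembled from finite limits and simplicial path objects is a weak equivalence, all of which $P_\Phi$ preserves. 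The case \textbf{IA} runs through the same template one level up, the one genuinely new point being that the strictly initial algebras, whose usual construction needs transfinite colimits that $\cM_\Phi$ may not possess, must throughout be replaced by the weakly stable typal initial algebras with representable lifts, which are expressible purely via the universes of \textbf{U} together with finite limits and the simplicial structure and hence are preserved.

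The hard part will be this third tier. The central difficulty is that $\cM_\Phi$ lacks the infinite (co)limits underlying the standard constructions of cell-monad algebras and of universes, so one cannot merely transport those constructions, and must instead check that their ``typal'', weakly stable, representably-lifted reformulations — which are precisely what homotopy type theory detects — are stable under $P_\Phi$; this is where the simplicial-filter hypothesis is indispensable, since the preserved simplicial enrichment is what lets univalence, representability, and the very notion of fibred structure be phrased in a form manifestly compatible with the projection. The other load-bearing ingredient throughout the second and third tiers is the representative lemma, which is also where the model-filter axioms (fibrancy of the objects of $\Phi$, $\Phi$-product stability of cofibrations and weak equivalences) actually get used. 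A smaller but real point to isolate for \textbf{U} is strictness: one must confirm that a split choice of classifying pullbacks in $\cM$ descends to $\cM_\Phi$, where morphisms are germs — routine, but not to be skipped.
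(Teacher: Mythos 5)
Your Tiers 1 and 2 are essentially the paper's own argument: the paper also reads \textbf{FLC}, \textbf{ET}, \textbf{LC}, \textbf{SLC}, \textbf{RP}, \textbf{S} off \cref{prop:filter quotient category} and \cref{thm:filter quotient model structure}, and handles \textbf{CIM}, \textbf{CEM}, \textbf{FE}, \textbf{TCP}, \textbf{CL} by exactly the representative mechanism you isolate (every class in $\cM_\Phi$ is defined via witnesses $f\times U$, and $P_\Phi$ preserves monos, (trivial) (co)fibrations, finite limits and exponentiability). Your treatment of \textbf{U} is also broadly the paper's route: induce the notion of fibred structure and the universes by germs, check local representability, relative acyclicity and coverage of fibrations on representatives over a common $U\in\Phi$, and observe that univalence survives because $\Eq(\tilde U)\to U\times U$ is built from structure (finite limits, local exponentials) that $P_\Phi$ preserves; this is the content of \cref{prop:induced lparanofscaf}, \cref{prop:univalent universe}, \cref{prop:closed universe} and \cref{thm:u}. (Two small misalignments: the paper does not need the simplicial hypothesis for \textbf{U} itself, and local representability is verified map-by-map on lifts to some $\cM_U$, not by transporting a single global classifier.)

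The genuine gap is your argument for \textbf{IA}. You justify preservation of weakly stable typal initial algebras with representable lifts by claiming they are ``expressible purely via the universes of \textbf{U} together with finite limits and the simplicial structure.'' That reduction is false as a dependency and cannot carry the proof: \textbf{IA} must be established for good/excellent model categories that do not satisfy \textbf{U} at all (e.g.\ the $n$-truncated Kan model structure in the paper's examples), and neither typal initiality (a section-existence condition in the comprehension category of fibrations) nor representable lifts (representability of a lifting presheaf on slices) is encoded by a universe. What is actually needed, and what occupies the bulk of \cref{subsec:algebras}, is: (i) a definition of cell monads on $\cM_\Phi$ as germs of cell monads on some $\cM_U$ (\cref{def:cell monad wp}), together with the check that the basic monad cells still exist in this sense despite the loss of local presentability (\cref{prop:free polynomial monad}); (ii) preservation of the two defining properties under the projections $P_U$, where typal initiality is preserved because it is a pullback-stability condition and representable lifts are preserved because $P_U$ is right adjoint to the inclusion $\cM_U\hookrightarrow\cM$ (\cref{lemma:adj fcoswp}, \cref{lemma:projections fcoswp}); and (iii) a common-refinement argument: the finitely many pieces of data of an initiality or lifting problem in $\cM_\Phi$ all lift to a single $\cM_{U_0}$, where the hypothesis on $\cM$ produces the section or representing object, which is then pushed forward along $P_\Phi$ (\cref{thm:filter fcoswp}, \cref{thm:ia}). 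Your ``same template one level up'' gestures at (i) and (iii) but supplies neither the right-adjointness argument (ii) nor a correct replacement for the universe-based justification, so as written the \textbf{IA} case does not go through.
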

	
Finally, we observe that the preservation of the model categorical properties in \cref{thm:prop} is sufficient to model the type theoretical constructors.

\begin{theorem} \label{thm:main}
	Let $\cM$ be a model category and $\Phi$ a simplicial model filter on $\cM$. In the following table for every chosen row we have the following statement. 
	\begin{enumerate}[leftmargin=*]
  \item If $\cM$ is a model category from the first column, defined via the properties stated in that row, then $\cM_\Phi$ is a model category from the second column, defined via the properties stated in the second column of that row. 
		\item $\cM,\cM_\Phi$ model all type constructors in the third column that are in the same row and all rows above it.
		\item $P_\Phi\colon \cM \to \cM_\Phi$ preserves all model categorical properties in $\cM$ and all type theoretical constructors.
	\end{enumerate}
	\[\hspace*{-1cm}
	\emph{
		\begin{tabular}{|l|l|l|}
							\hline 
							\textbf{Original Model Structure} & \textbf{Filter Quotient Model Structure} & \textbf{Type Theoretical Constructors} \\ \hline
									Logical model category \cite{arndtkapulkin2011modelstypetheory} & Logical model category  & {\small Unit Type, }$\Sigma${\small-types, }$\Pi${\small-types} \\ 
									\hyperlink{item:flc}{\textbf{FLC}}, \hyperlink{item:tcp}{\textbf{TCP}}, \hyperlink{item:fe}{\textbf{FE}} & \hyperlink{item:flc}{\textbf{FLC}},  \hyperlink{item:tcp}{\textbf{TCP}}, \hyperlink{item:fe}{\textbf{FE}} & \\ \hline
									Type-theoretic model category \cite{shulman2015homotopycanonicity} & Finitary type-theoretic model category & {\small identity types, function extensionality} \\ 
									\hyperlink{item:lc}{\textbf{LC}}, \hyperlink{item:cl}{\textbf{CL}}, \hyperlink{item:rp}{\textbf{RP}}, \hyperlink{item:fe}{\textbf{FE}} & \hyperlink{item:flc}{\textbf{FLC}}, \hyperlink{item:cl}{\textbf{CL}}, \hyperlink{item:rp}{\textbf{RP}}, \hyperlink{item:fe}{\textbf{FE}} & \\ \hline
									Good model category \cite{lumsdaineshulman2020goodexcellent} & Finitary good model category& {\small empty type, Boolean type, coproduct types, } \\ 
									\hyperlink{item:lc}{\textbf{LC}}, \hyperlink{item:s}{\textbf{S}}, \hyperlink{item:cim}{\textbf{CIM}}, \hyperlink{item:cl}{\textbf{CL}}, \hyperlink{item:rp}{\textbf{RP}}, \hyperlink{item:slc}{\textbf{SLC}} & \hyperlink{item:flc}{\textbf{FLC}}, \hyperlink{item:s}{\textbf{S}}, \hyperlink{item:cim}{\textbf{CIM}}, \hyperlink{item:cl}{\textbf{CL}}, \hyperlink{item:rp}{\textbf{RP}}, \hyperlink{item:slc}{\textbf{SLC}}&  {\small pushout types,``cell complex'' types: } \\ 
									& & {\small including spheres and tori} \\ \hline 
									Excellent model category \cite{lumsdaineshulman2020goodexcellent}& Initial algebra closed model category & {\small natural numbers type, W-types, propositional} \\ 
									\hyperlink{item:lc}{\textbf{LC}}, \hyperlink{item:s}{\textbf{S}}, \hyperlink{item:cim}{\textbf{CIM}}, \hyperlink{item:cl}{\textbf{CL}}, \hyperlink{item:rp}{\textbf{RP}}, \hyperlink{item:slc}{\textbf{SLC}}, \hyperlink{item:lp}{\textbf{LP}}, \hyperlink{item:cg}{\textbf{CG}} & \hyperlink{item:flc}{\textbf{FLC}}, \hyperlink{item:s}{\textbf{S}}, \hyperlink{item:cim}{\textbf{CIM}}, \hyperlink{item:cl}{\textbf{CL}}, \hyperlink{item:rp}{\textbf{RP}}, \hyperlink{item:slc}{\textbf{SLC}}, \hyperlink{item:ia}{\textbf{IA}} &  {\small  truncations, James constructions, localizations} \\ \hline 
									Type-theoretic model topos \cite{shulman2019inftytoposunivalent} & Elementary type-theoretic model topos & {\small arbitrarily large univalent universes closed}  \\ 
									\hyperlink{item:lc}{\textbf{LC}}, \hyperlink{item:gt}{\textbf{GT}}, \hyperlink{item:rp}{\textbf{RP}}, \hyperlink{item:s}{\textbf{S}}, \hyperlink{item:cem}{\textbf{CEM}}, \hyperlink{item:lp}{\textbf{LP}}, \hyperlink{item:cg}{\textbf{CG}}, \hyperlink{item:slc}{\textbf{SLC}}, \hyperlink{item:f}{\textbf{F}} & \hyperlink{item:flc}{\textbf{FLC}}, \hyperlink{item:et}{\textbf{ET}}, \hyperlink{item:rp}{\textbf{RP}}, \hyperlink{item:s}{\textbf{S}}, \hyperlink{item:cem}{\textbf{CEM}}, \hyperlink{item:slc}{\textbf{SLC}}, \hyperlink{item:ia}{\textbf{IA}}, \hyperlink{item:u}{\textbf{U}}  & {\small under }$\Sigma${\small- and }$\Pi${\small-types, identity types, binary}  \\  
										& & {\small sum types and containing ``cell complex'' types} \\ \hline 
						\end{tabular}
	}
	\]
\end{theorem}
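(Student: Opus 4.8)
The plan is to derive all three items by combining \cref{thm:prop} with the results of \cite{arndtkapulkin2011modelstypetheory, shulman2015homotopycanonicity, lumsdaineshulman2020goodexcellent, shulman2019inftytoposunivalent} and a transport-of-structure argument along $P_\Phi$. Item (1) is essentially bookkeeping: reading the table row by row, the second column is obtained from the first by retaining exactly the properties that \cref{thm:prop} asserts are preserved by $P_\Phi$ --- namely \textbf{RP}, \textbf{FE}, \textbf{TCP}, \textbf{CL}, \textbf{CIM}, \textbf{CEM}, \textbf{SLC}, and, since $\Phi$ is a simplicial model filter, \textbf{S} --- while making three replacements. First, \textbf{LC} is weakened to \textbf{FLC}; this is legitimate because \textbf{LC} implies \textbf{FLC} and $P_\Phi$ preserves finite limits and colimits by \cref{prop:filter quotient category}. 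Second, \textbf{GT} is weakened to \textbf{ET} (row~5); this is legitimate because every Grothendieck topos is an elementary topos and \textbf{ET} is preserved by \cref{prop:filter quotient category} and \cref{thm:prop}. Third, the ``set-theoretic'' properties \textbf{LP}, \textbf{CG}, and \textbf{F} are dropped and, in rows~4 and~5, replaced by \textbf{IA} (and, in row~5, also \textbf{U}); here I would first invoke \cite{lumsdaineshulman2020goodexcellent} that an excellent model category satisfies \textbf{IA}, and \cite{shulman2019inftytoposunivalent} that a type-theoretic model topos satisfies \textbf{IA} and \textbf{U}, and then use \cref{thm:prop} (valid because $\Phi$ is a simplicial model filter) to conclude $\cM_\Phi$ inherits these. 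Thus in every row $\cM_\Phi$ has precisely the column-2 properties, which is the definition of the named class of model category.

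For items (2) and (3) I would not re-run the arguments of the cited papers under the weakened hypotheses; instead I would transport the witnessing structure along $P_\Phi$. That $\cM$ models the constructors in its row and all rows above is the content of the cited literature, giving item (2) for $\cM$. For $\cM_\Phi$, the point is that every type constructor in the table is witnessed by structure built from ingredients that $P_\Phi$ preserves: finite limits and colimits, (trivial) fibrations and cofibrations, path-object factorizations, exponentiable and locally exponentiable fibrations, subobject classifiers and --- when $\Phi$ is a simplicial model filter --- the simplicial enrichment, cell monads together with their weakly stable typal initial algebras with representable lifts, and strict univalent universes (\cref{prop:filter quotient category}, \cref{thm:filter quotient model structure}, \cref{thm:prop}). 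Concretely, the unit, $\Sigma$- and $\Pi$-types arise from finite limits and pushforward along fibrations; identity types and function extensionality from a chosen path object together with the right-properness and exponentiability input; the empty, Boolean, coproduct, pushout and cell-complex types from finite colimits and the good/simplicial structure; the natural numbers type, W-types, truncations, James constructions and localizations from initial algebras of the relevant cell monads, i.e.\ from \textbf{IA}; and the univalent universes from \textbf{U}. Applying $P_\Phi$ to the structure witnessing a constructor in $\cM$ therefore yields structure of the same shape in $\cM_\Phi$, since $P_\Phi$ sends each of these ingredients to its analogue; this simultaneously establishes item (3) and item (2) for $\cM_\Phi$.

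The hard part will be making ``$P_\Phi$ sends the witnessing structure to the witnessing structure'' precise at the two places where that structure is not merely a finite (co)limit. For $\Pi$-types and identity types one has to check that the filter quotient of a pushforward along a fibration is again the pushforward in $\cM_\Phi$, and similarly for the path-object factorization; this should follow from the explicit hom-set description in \cref{def:filter quotient category} together with the preservation of (local) exponentiability in \cref{prop:filter quotient category}, but it must be spelled out. The genuinely delicate point is the \textbf{IA} clause of \cref{thm:prop} on which rows~4 and~5 depend: one must verify that passing to $\cM_\Phi$ interacts correctly with forming the category of algebras of a cell monad with parameters and preserves weakly stable typal initial algebras with representable lifts, even though the transfinite construction that produces such algebras uses colimits that need not survive in $\cM_\Phi$ --- so the argument must lean on the ``weakly stable'' and ``representable lifts'' qualifiers, which are precisely what reduce the needed data to finite and fibrancy/representability conditions, rather than on genuine colimits. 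Once this is in place, the remaining work is the row-by-row bookkeeping above, together with a check that the portions of \cite{shulman2015homotopycanonicity, lumsdaineshulman2020goodexcellent, shulman2019inftytoposunivalent} invoking small (co)completeness, local presentability or cofibrant generation use these only to construct the model structure (which $\cM_\Phi$ already carries by \cref{thm:filter quotient model structure}) or to derive \textbf{IA} or \textbf{U} (which $\cM_\Phi$ has directly).
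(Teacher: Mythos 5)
Your item (1) is exactly the paper's argument: read off \cref{thm:prop}, weaken \textbf{LC} to \textbf{FLC} and \textbf{GT} to \textbf{ET}, and replace \textbf{LP}/\textbf{CG}/\textbf{F} by \textbf{IA} (and \textbf{U}), using \cite{lumsdaineshulman2020goodexcellent} and \cite{shulman2019inftytoposunivalent} to see that $\cM$ satisfies \textbf{IA} and \textbf{U} in the last two rows before invoking \cref{thm:prop}. Delegating the genuinely hard preservation statements to \textbf{IA} and \textbf{U} (i.e.\ to \cref{thm:ia} and \cref{thm:u} via \cref{thm:prop}) is also precisely how the paper is organized.

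Where your proposal diverges, and where it has a real gap, is the central mechanism for items (2)--(3): ``apply $P_\Phi$ to the structure witnessing a constructor in $\cM$.'' The functor $P_\Phi$ is bijective on objects but very far from full: a fibration, diagram, or algebra structure in $\cM_\Phi$ is only represented by data in $\cM_U$ for some $U\in\Phi$ (equivalently, by some $f$ with $f\times U$ a fibration, etc.), and need not be the image of any fibration or diagram of $\cM$ itself. So pushing the witnessing structure of $\cM$ forward along $P_\Phi$ only equips the strict image of $P_\Phi$ with, say, $\Pi$-types or identity types; it does not define them along an arbitrary fibration of $\cM_\Phi$, which is what item (2) for $\cM_\Phi$ requires. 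The paper avoids this by arguing intrinsically: since $\cM_\Phi$ itself satisfies the column-2 (finitary) axioms by \cref{thm:prop}, the semantic constructions of \cite{arndtkapulkin2011modelstypetheory,shulman2015homotopycanonicity,lumsdaineshulman2020goodexcellent,shulman2019inftytoposunivalent} are re-applied directly to $\cM_\Phi$, after checking that they use small (co)limits, local presentability and cofibrant generation only where these have been repackaged into \textbf{IA} and \textbf{U}; the places where structure genuinely must be lifted locally over some $U\in\Phi$ and then projected are handled by the filtered-colimit arguments of \cref{sec:technicalities} (e.g.\ \cref{thm:filter fcoswp}, \cref{prop:induced lparanofscaf}), not by transport along $P_\Phi$ alone. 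Note that your own closing sentence --- checking that the cited proofs invoke the infinitary hypotheses only to build the model structure or to derive \textbf{IA}/\textbf{U} --- is exactly this intrinsic route, and it, rather than the transport argument you lead with, is what actually carries items (2) and (3); item (3) then follows from the preservation statements (\cref{thm:prop}, \cref{thm:ia}(3), \cref{prop:univalent universe}), rather than being the source of item (2).
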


\begin{remark} \label{rem:difficulty}
	One interesting aspect of \cref{thm:main} is that as we go further down the rows of the table, $P_\Phi$ preserves less and less properties. Concretely, while in the first row $P_\Phi$ preserves everything, and in the next two rows we only lose infinite (co)limits, in the last two rows we see significant differences. Hence, the bulk of \cref{sec:technicalities} focuses on proving that $P_\Phi$ still preserves the necessary type theoretical constructors in the last two rows, even though there are significant model categorical differences.
\end{remark}

Let us focus on one particular instance of the main result. One explicit example of a filter quotient is the \emph{filter product}. Building on the discussions in \cref{ex:filter product} we immediately get the following result.

\begin{corollary}\label{cor:filter product}
	Let $\cM$ be a model category with strict initial object, $I$ a set and $\Phi$	a filter of subsets on $I$. Then all results	from \cref{thm:main} apply to the filter product	$\prod_\Phi \cM$.
\end{corollary}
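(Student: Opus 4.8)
The plan is to deduce \cref{cor:filter product} from \cref{thm:main} applied to the $I$-fold power $\prod_I\cM$ together with the filter of subterminal objects induced by $\Phi$. Thus the corollary is not a genuinely new argument; the only real content is checking that the hypotheses of \cref{thm:main} transfer from $\cM$ to $\prod_I\cM$.

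First I would make the filter explicit. For $J\subseteq I$ let $U_J\in\prod_I\cM$ be the object that is componentwise the terminal object on $J$ and the strict initial object $\emptyset$ off $J$; the hypothesis that $\cM$ has a strict initial object is exactly what guarantees that $\emptyset$, and hence each $U_J$, is subterminal, so that the upward closure of $\{U_J : J\in\Phi\}$ in $\Sub(\prod_I\cM)$ is a filter of subterminal objects. By construction its filter quotient is the filter product $\prod_\Phi\cM$, and by \cite[Lemma 6.3, Corollary 6.10]{rasekh2025filtermodelcat}, as recalled in \cref{ex:filter product}, this filter is a model filter whenever $\cM$ has a strict initial object and a simplicial model filter whenever $\cM$ is in addition simplicial. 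Hence $(\prod_I\cM,\Phi)$ is a legitimate input for \cref{thm:main} as soon as $\prod_I\cM$ occupies the first column of the appropriate row.

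Second I would check that $\prod_I\cM$ inherits from $\cM$ every model categorical property in the first column of the table of \cref{thm:main}. Since $I$ is a set, limits, colimits, fibrations, cofibrations, weak equivalences and the simplicial (co)tensoring in $\prod_I\cM$ are all computed componentwise, so \hyperlink{item:flc}{\textbf{FLC}}, \hyperlink{item:lc}{\textbf{LC}}, \hyperlink{item:rp}{\textbf{RP}}, \hyperlink{item:s}{\textbf{S}}, \hyperlink{item:cim}{\textbf{CIM}}, \hyperlink{item:cem}{\textbf{CEM}}, \hyperlink{item:cl}{\textbf{CL}} and \hyperlink{item:fe}{\textbf{FE}} pass to the power at once; a set-indexed power of a Grothendieck topos is a Grothendieck topos, a set-indexed power of a (simplicially) locally Cartesian closed category is again of that kind, and a set-indexed power of a combinatorial model category is combinatorial, which together account for \hyperlink{item:gt}{\textbf{GT}}, \hyperlink{item:slc}{\textbf{SLC}}, \hyperlink{item:lp}{\textbf{LP}} and \hyperlink{item:cg}{\textbf{CG}}; and the locally representable, relatively acyclic notion of fibred structure witnessing \hyperlink{item:f}{\textbf{F}} for $\prod_I\cM$ is assembled componentwise from the one on $\cM$. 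Combined with the previous step, this shows $(\prod_I\cM,\Phi)$ satisfies the hypotheses of \cref{thm:main} for whichever of its rows $\cM$ belongs to.

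Finally I would invoke \cref{thm:main} for the pair $(\prod_I\cM,\Phi)$ and read off its three conclusions for $(\prod_I\cM)_\Phi=\prod_\Phi\cM$, which is exactly the content of \cref{cor:filter product}. I expect the only genuinely non-formal point to be the componentwise transfer in the second step for the two most structured rows: verifying that a set-indexed power of an excellent model category retains the local presentability and cofibrant generation needed as inputs, and that a set-indexed power of a type-theoretic model topos still carries a locally representable and relatively acyclic notion of fibred structure \hyperlink{item:f}{\textbf{F}}, since these are the properties whose interaction with (possibly infinite) products rests on smallness considerations and is therefore the least automatic; everything else is either componentwise-obvious or already provided by \cref{thm:main} and \cite{rasekh2025filtermodelcat}.
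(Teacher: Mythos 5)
Your proposal is correct and follows essentially the same (largely implicit) route as the paper: the corollary is obtained by applying \cref{thm:main} to the power $\prod_I\cM$ equipped with the filter of subterminal objects described in \cref{ex:filter product}, with the verification that this is a (simplicial) model filter delegated to \cite[Lemma 6.3, Corollary 6.10]{rasekh2025filtermodelcat}. Your extra componentwise check that $\prod_I\cM$ inherits the first-column properties of the table (the only substantive points being \textbf{LP}, \textbf{CG}, and \textbf{F} for set-indexed powers, the last handled by closure of type-theoretic model topoi under diagram categories) is precisely the step the paper treats as immediate, and it is sound.
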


Let us give some explicit examples of the previous case.

\begin{example}
 Let $\cM$ be a type-theoretic model topos and $\Phi$ a filter of subsets on $I$. Then the underlying category is a topos, meaning the initial object is indeed strict \cite[Proposition IV.4]{maclanemoerdijk1994topos}. Hence, the filter product $\prod_\Phi \cM$ is a model of Martin-L{\"o}f type theory with all the constructors in the third column of \cref{thm:main}, meaning it models homotopy type theory.
\end{example}

Let us restrict to an example of the example, relying on \cref{ex: filter product n}. 

\begin{example} \label{ex:filter product kan}
Let $\sset^{Kan}$ be the Kan model structure on simplicial sets, and $\cF$ a non-principal filter on $\bN$. 
	\begin{enumerate}[leftmargin=*]
		\item By \cref{thm:main}, The filter product $\prod_\cF \sset$ is a model of Martin-L{\"o}f type theory with all the structures stated in \cref{thm:main}, meaning it models homotopy type theory.
		\item By \cref{ex: filter product n}, the filter product $\prod_\cF \sset$ does not have infinite (co)limits, is not locally presentable and the model structure is not cofibrantly generated.
	\end{enumerate}
\end{example}

\begin{remark}
	In \cite{kapulkinlumsdaine2021kanunivalent}, the authors explicitly characterize the universe as the Kan complex of (small) Kan complexes $U$. \cref{thm:main} implies that the universe in $\prod_\cF \sset$ can similarly be explicitly characterized as $(U)_\bN$.
\end{remark}

Let us mention one more general example	that follows from \cref{thm:main} and \cref{ex: filter product n}.

\begin{example}
	Let $n \geq 0$ and let $\sset^{Kan_n}$ be the $n$-truncated Kan model structure on the category of simplicial sets, meaning an object $K$ is fibrant if it is Kan fibrant and $n$-truncated. Then $\sset^{Kan_n}$ is a left Bousfield localization of the Kan model structure and hence an excellent model category. It is, however, not a type theoretic model topos, as the universe of $n$-truncated objects is not itself $n$-truncated. Hence, for every set $I$ and filter $\Phi$ on $I$, the induced model structure on $\prod_\Phi \sset^{Kan_n}$ models all type theoretical constructors in the third column of \cref{thm:main} except for the last row, meaning it does not model univalent universes.   
\end{example}

We end this section with some interesting implications, in particular of \cref{ex:filter product kan}. The binary coproduct type in any model of a type theory will correspond to the universal property of a binary coproduct in category theory. However, we would not expect the universal property of an infinite coproduct to correspond to any type constructor in homotopy type theory. While the previously developed models could not match this intuition, the lack of even countable coproducts in \cref{ex:filter product kan} finally confirms this.

\begin{corollary} \label{cor:independent infinite colimit}
	The existence of infinite (co)limits is independent of homotopy type theory. In particular, the universal property of infinite (co)limits cannot be articulated in the internal language of homotopy type theory.
\end{corollary}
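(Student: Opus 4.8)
The plan is to play two models of homotopy type theory against each other: the standard simplicial model, whose underlying category is cocomplete (and complete), and a filter product of it along a non-principal filter on $\bN$, whose underlying category is not, and then to invoke soundness of the categorical interpretation — precisely the content that $\cM_\Phi$ models the same type-theoretic constructors and axioms as $\cM$ — to conclude that nothing provable in homotopy type theory can distinguish the two.

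Concretely, I would take $\sset^{Kan}$ together with a non-principal filter $\cF$ on $\bN$. By Kapulkin--Lumsdaine--Voevodsky \cite{kapulkinlumsdaine2021kanunivalent}, $\sset^{Kan}$ models homotopy type theory with all the constructors listed in \cref{thm:main}, and its underlying category $\sset$ is small (co)complete, so it has all infinite (co)limits. By \cref{ex:filter product kan}, the filter product $\prod_\cF \sset$ is also a model of homotopy type theory with those same constructors, while by \cref{ex: filter product n} its underlying category has no infinite coproducts — and, by the same mechanism (a non-principal filter on $\bN$ is not countably complete, so germs of maps into a candidate infinite (co)product cannot be amalgamated over a single member of $\cF$), no infinite products either. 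Thus the statement ``the underlying category has all infinite (co)limits'' is true in one model of homotopy type theory and false in another. Since any type, term, or sentence derivable in homotopy type theory is validated in every such model, neither the assertion that infinite (co)limits exist nor its negation is provable; this is the desired independence.

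For the ``in particular'' clause I would argue contrapositively. If the universal property of an infinite (co)limit were articulable in the internal language, it would be witnessed by some formula $\psi$ in the syntax whose interpretation in every model singles out the (co)limiting cocones; the closed formula $\forall(\text{family})\,\exists(\text{object})\,\psi$ would then be a well-formed internal statement, true in $\sset^{Kan}$ and false in $\prod_\cF \sset$, hence independent — in particular not a theorem, in contrast to the finite case where the universal property of a binary (co)product is internally available. I expect the only genuine subtlety to lie in pinning down what counts as ``articulated in the internal language'': the clean route is to fix a single countable index set (so the index is itself internally nameable) and to work with any syntactic class of assertions closed under the categorical interpretation, so that the soundness statement underlying \cref{thm:main} applies verbatim. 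The remaining work — tracking the candidate (co)limits and their cocones through the projection functor $P_\cF$ — is routine given \cref{prop:filter quotient category} and \cref{thm:prop}, which already record exactly which (co)limits $P_\cF$ does and does not preserve.
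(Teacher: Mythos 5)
Your proposal matches the paper's own (implicit) argument: the corollary is drawn directly from \cref{ex:filter product kan}, i.e., from contrasting $\sset^{Kan}$ with the filter product $\prod_\cF \sset$ along a non-principal filter on $\bN$ (via \cref{thm:main} and \cref{ex: filter product n}) and invoking soundness to conclude that no statement provable in homotopy type theory can force or forbid infinite (co)limits. The paper handles the ``in particular'' clause at the same informal level you do, so your acknowledged caveat about pinning down ``articulated in the internal language'' is not a gap relative to the paper's own treatment.
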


Every model of homotopy type theory has a natural number type, which in our models corresponds to the natural number object. Beyond its universal property, a natural number object in a category can also have the property of being \emph{standard}, meaning the collection of maps $\{n\}\colon1 \to \bN$ is jointly surjective, where $n$ ranges over the external set of natural numbers. We would not expect this property to appear on the type theory side, as even its articulation requires an external perspective. Yet, it is true in every type-theoretic model topos (where the maps $\{n\}$ in fact give us a colimit cocone). Again,  \cref{ex:filter product kan} helps address this, as the natural number object therein is not standard (as proven in \cite[Example D.5.1.7]{johnstone2002elephantii}).

\begin{corollary} \label{cor:independent standard nno}
 Models of homotopy type theory can have standard or non-standard natural number objects.
\end{corollary}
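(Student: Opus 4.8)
The plan is to exhibit two models of homotopy type theory, in the sense of \cref{thm:main}, whose natural number objects differ in the relevant respect: one in which the maps $\{n\}\colon 1\to\bN$ are jointly epimorphic, and one in which they are not. For the standard side I would simply invoke a type-theoretic model topos, concretely the simplicial model $\sset^{Kan}$ of \cite{kapulkinlumsdaine2021kanunivalent}: by the last row of \cref{thm:main} it models all the listed type-theoretic constructors, and its natural number object is the constant simplicial set $\bN=\coprod_{n\in\bN}1$. Hence the maps $\{n\}\colon 1\to\bN$ exhibit $\bN$ as a colimit of copies of the terminal object; in particular they are jointly epimorphic, so the natural number object is standard.

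For the non-standard side, fix a non-principal filter $\cF$ on $\bN$ containing every cofinite subset (for instance the cofinite filter, or a non-principal ultrafilter), and form the filter product $\mathcal{N}:=\prod_\cF\sset^{Kan}$ of \cref{ex:filter product kan}. By that example $\mathcal{N}$ is again a model of homotopy type theory, and by \cref{thm:main} (last row) its underlying category is an elementary topos. By \cref{prop:filter quotient category} the projection functor onto $\mathcal{N}$ preserves natural number objects, so the natural number object $(\bN)_\cF$ of $\mathcal{N}$ is the image of the constant family $(\bN)_{i\in\bN}$. To see that it is \emph{not} standard I would produce a proper subobject of $(\bN)_\cF$ through which every $\{n\}$ factors: let $A$ be the image in $\mathcal{N}$ of the componentwise subobject $(A_i)_{i\in\bN}\hookrightarrow(\bN)_{i\in\bN}$ with $A_i:=\{0,1,\dots,i\}\subseteq\bN$. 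Unwinding the description of hom-sets in \cref{def:filter quotient category} shows that $A\hookrightarrow(\bN)_\cF$ is a proper monomorphism of $\mathcal{N}$: an inverse would force $A_i\cong\bN$ for all $i$ in some member of $\cF$, which is impossible since each $A_i$ is finite and $\cF$ is proper. On the other hand, for each fixed $n$ one has $n\in A_i$ whenever $i\ge n$, and $\{i:i\ge n\}$ is cofinite, hence in $\cF$, so $\{n\}\colon 1\to(\bN)_\cF$ factors through $A$. In an elementary topos a proper monomorphism through which a whole family of maps factors witnesses that the family is not jointly epimorphic — compare the monomorphism's classifying map $\chi\colon(\bN)_\cF\to\Omega$ with the constant map $\top$ — so the maps $\{n\}$ fail to be jointly epimorphic and $(\bN)_\cF$ is non-standard. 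Equivalently, the ``diagonal'' global element of $(\bN)_\cF$ represented by $(\{i\})_{i\in\bN}$ is distinct from every $\{n\}$, which is the substance of \cite[Example D.5.1.7]{johnstone2002elephantii}.

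Combining the two cases yields the corollary. The step I expect to require genuine care is the one in the middle of the second paragraph: checking that the projection onto $\mathcal{N}$ carries the componentwise-proper monomorphism $(A_i)\hookrightarrow(\bN)$ to a monomorphism that is still proper — that is, understanding precisely which non-isomorphisms the projection functor reflects — so that $A$ is genuinely a proper subobject and not accidentally identified with $(\bN)_\cF$. Everything else follows directly from \cref{thm:main}, \cref{ex:filter product kan}, \cref{prop:filter quotient category}, and standard properties of type-theoretic model topoi.
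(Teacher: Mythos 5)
Your proposal is correct and takes essentially the same route as the paper: standardness is witnessed by a type-theoretic model topos such as $\sset^{Kan}$, and non-standardness by the natural number object of the filter product $\prod_\cF \sset$, which the paper obtains by citing \cite[Example D.5.1.7]{johnstone2002elephantii}. The only difference is that you verify the non-standardness directly, via the proper subobject of initial segments through which every $\{n\}\colon 1\to(\bN)_\cF$ factors (correctly insisting that $\cF$ contain all cofinite sets, which the cited example also needs), so your argument just makes the outsourced step explicit.
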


Type theories are by definition \emph{internally well-pointed}, meaning the function type $1 \to X$ is just $X$, and this property does translate to every model, where it manifests as the fact that the internal mapping object $X^1$ is just $X$. However, for models we can also consider \emph{external well-pointedness}. We say a simplicial model category $\cM$ with cofibrant terminal object $1$ is \emph{externally well-pointed} if the mapping space functor $\Map_{\cM}(1,-)\colon\cM \to\sset$ is faithful. In the non-homotopical setting this external notion of well-pointedness is in fact one of the defining properties of Lawvere's \emph{elementary theory of the category of sets (ETCS)} \cite{lawvere1964elementarysets}, see also \cite[Section VI.10]{maclanemoerdijk1994topos}. Up until now, this property was very restrictive for $\infty$-categorical models of HoTT, as the following result demonstrates.

\begin{proposition} \label{prop:wellpointed unique}
	Up to equivalence, there is a unique well-pointed type-theoretic model topos $\cM$.
\end{proposition}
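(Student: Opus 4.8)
The plan is to show that external well-pointedness, in the presence of the type-theoretic model topos axioms, forces the underlying $\infty$-category to be equivalent to the $\infty$-category of spaces, which (being presentable, hence determined up to equivalence) pins down $\cM$ up to Quillen equivalence, and then to argue the standard Kan model structure is the representative. First I would pass from the model category $\cM$ to its underlying $\infty$-topos $\cX$; by \hyperlink{item:gt}{\textbf{GT}} this is a Grothendieck $\infty$-topos, so it admits a geometric morphism $\Gamma \colon \cX \to \mathscr{S}$ to spaces with left adjoint the constant-diagram functor, and $\Gamma$ is modeled (up to homotopy) by the mapping-space functor $\Map_\cM(1,-)$ since $1$ is cofibrant. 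External well-pointedness says precisely that $\Gamma$ is faithful on the homotopy category — more to the point, I would upgrade this to the statement that $\Gamma$ is conservative and preserves enough colimits to be comonadic, or directly that the unit/counit of the adjunction is an equivalence.

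The key step is to show that faithfulness of $\Gamma$ forces $\Gamma$ to be an equivalence. The mechanism: in a Grothendieck $\infty$-topos the global-sections functor $\Gamma$ has a left adjoint $\mathrm{const}$, and $\Gamma$ being faithful (even just on $\pi_0$ of mapping spaces between, say, $1$ and the subobjects of $1$) forces the subobject lattice of $1$ to be trivial — i.e. $\cX$ is \emph{well-pointed} in the internal sense, $0 \not\simeq 1$, and $1$ generates. Combined with the fact that $\cX$ is an $\infty$-topos, a well-pointed Grothendieck $\infty$-topos generated by the point must be $\mathscr{S}$: one shows $\mathrm{const}\colon \mathscr{S} \to \cX$ is fully faithful (using that $1$ is a generator and that $\mathrm{const}$ preserves the relevant colimits and finite limits) and essentially surjective (every object is a colimit of copies of $1$ because $1$ generates and $\cX$ is cocomplete). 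This is the $\infty$-categorical analogue of the classical fact that the only well-pointed Grothendieck topos is $\mathscr{S}\mathrm{et}$, and I expect to cite or adapt the $1$-categorical argument in \cite[Section VI.10]{maclanemoerdijk1994topos} together with presentability. Once $\cX \simeq \mathscr{S}$, the model category $\cM$ is a type-theoretic model topos presenting $\mathscr{S}$; by the rectification/uniqueness results underlying \cite{shulman2019inftytoposunivalent} (a type-theoretic model topos is determined up to the appropriate equivalence by its underlying $\infty$-topos, e.g. it is Quillen equivalent to the injective or a local-universes model structure on simplicial presheaves on a site presenting $\cX$), $\cM$ is equivalent to the standard simplicial-sets model — giving uniqueness up to equivalence.

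The main obstacle is the middle step: rigorously deducing from \emph{faithfulness} of $\Map_\cM(1,-)$ (an a priori weak condition — it only controls $\pi_0$ of mapping \emph{sets}, not full mapping spaces) that $1$ is an $\infty$-categorical generator and that $\mathrm{const}$ is fully faithful. One has to be careful that faithfulness on homotopy categories does not automatically give conservativity or full faithfulness after the adjunction; the argument must exploit the topos structure — in particular the subobject classifier and the fact that in an $\infty$-topos equivalences are detected by the internal logic — to bootstrap from $\pi_0$-level faithfulness to a genuine equivalence, perhaps by first showing $\Gamma$ is conservative on $0$-truncated objects and then using Postnikov/Blakers–Massey-type descent (available by \hyperlink{item:gt}{\textbf{GT}}) to climb the truncation tower. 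A secondary point to nail down is that such a $\cM$ \emph{exists} (so ``unique'' is not vacuous), which is immediate since $\sset^{Kan}$ is itself a type-theoretic model topos with underlying $\infty$-category $\mathscr{S}$ and mapping-space functor $\Map(1,-) = \mathrm{id}$, visibly faithful.
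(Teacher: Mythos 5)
Your overall strategy is the same as the paper's at the top level: reduce the statement to showing that the derived global-sections functor $\Map_{\Ho_\infty\cM}(1,-)\colon \Ho_\infty\cM \to \s$ is an equivalence, using presentability/cocompleteness and the universal property of $\s$, with existence supplied by $\sset^{Kan}$. However, there is a genuine gap exactly at the point you yourself flag as ``the main obstacle'': the passage from \emph{faithfulness} of $\Map_{\cM}(1,-)$ to \emph{full} faithfulness (equivalently, to the counit of the adjunction being an equivalence). You only list candidate strategies---comonadicity, conservativity on $0$-truncated objects, Postnikov/Blakers--Massey descent, adapting the $1$-categorical well-pointedness argument of Mac Lane--Moerdijk---without carrying any of them out. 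Since this implication is the entire mathematical content of the proposition, the proposal as written is a plan, not a proof. Note also that your intermediate claims are themselves nontrivial and unproved: that faithfulness makes the subobject lattice of $1$ trivial does not by itself give that $1$ generates, and ``$1$ generates'' is essentially equivalent to what has to be established.

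The paper closes this gap with a short, concrete argument: by presentability, $\Map_{\Ho_\infty\cM}(1,-)$ has a left adjoint $1\otimes -$; faithfulness forces the counit $1\otimes\Map_{\Ho_\infty\cM}(1,X)\to X$ to be $(-1)$-truncated, hence a subobject of $X$; letting $c\colon X\to\Omega$ classify this subobject and $t\colon X\to\Omega$ classify $\id_X$, every global point $1\to X$ factors through the counit, so $\Map_{\Ho_\infty\cM}(1,c)\simeq\Map_{\Ho_\infty\cM}(1,t)$, and faithfulness then yields $c\simeq t$, i.e.\ the counit is an equivalence and the functor is fully faithful. So your instinct that the subobject classifier must enter is right, but the actual mechanism (comparing the two classifying maps via global points) is missing from your sketch. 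Finally, your last step---upgrading $\Ho_\infty\cM\simeq\s$ to a uniqueness statement for the model category itself via rectification results---goes beyond what the paper proves or needs: the paper's uniqueness is at the level of the underlying $\infty$-category, so that extra step, while plausible for combinatorial simplicial model categories, is not required and its citation would have to be made precise.
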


\begin{proof}
 For existence, we simply let $\cM = \sset^{Kan}$. Now, let $\cM$ be a type-theoretic model topos and assume that $\Map_{\cM}(1,-)\colon \cM \to \sset$ is faithful. We denote by $\Map_{\Ho_\infty\cM}(1,-)\colon \Ho_\infty\cM \to \s$ the induced functor on underlying $\infty$-categories, where we are following the common convention $\s  = \Ho_\infty(\sset^{Kan})$. Notice $\Ho_\infty\cM$ is a cocomplete $\infty$-category \cite[Corollary 4.2.4.8]{lurie2009htt}. Hence, it suffices to prove $\Map_{\Ho_\infty\cM}(1,-)$ is in fact full. Indeed, in that case $\Ho_\infty\cM$ is a cocomplete sub-$\infty$-category of $\s$, which, by the universal property of $\s$, means it must be equivalent. 

 By \cite[Proposition A.3.7.6]{lurie2009htt}, $\Ho_\infty\cM$ is a presentable $\infty$-category, meaning, by \cite[Corollary 5.5.2.9]{lurie2009htt}, $\Map_{\Ho_\infty\cM}(1,-)$ has a left adjoint denoted $1 \otimes -\colon \s \to \Ho_\infty\cM$. Now, faithfulness implies that for every object $X$ in $\Ho_\infty\cM$, the counit of the adjunction $1 \otimes \Map_{\Ho_\infty\cM}(1,X) \to X$ is $(-1)$-truncated in $\Ho_\infty\cM$, meaning it is a subobject of $X$. Let $c\colon X \to \Omega$  be the map to the subobject classifier, classifying this subobject. Now, by definition, every map $ 1 \to X$ factors through $1 \otimes \Map_{\Ho_\infty\cM}(1,X)$, which means that $\Map_{\Ho_\infty\cM}(1,c) \simeq \Map_{\Ho_\infty\cM}(1,t)$, where $t\colon X \to \Omega$ classifies $\id_X$. By the well-pointedness assumption, this implies that $c$ and $t$ are equivalent subobjects of $X$, meaning $c$ is a weak equivalence and so $\Map_{\Ho_\infty\cM}(1,-)$ is fully faithful. 
\end{proof}

This result implies that there is a unique model of homotopy type theory that has small colimits and is well-pointed, namely $\sset^{Kan}$. It was already known that dropping the well-pointedness assumption results in many new models (every type theoretic model-topos), however, as of now, it was an open problem whether there are further well-pointed models, even though it would have been expected. With our results at hand, a slight variation of \cref{ex:filter product kan} helps address this matter. 

\begin{example}[Non-Standard Model of Homotopy Type Theory] \label{ex:nonstandard model}
	Let $\cU$ be a non-principal ultrafilter on $\bN$, meaning it is a maximal non-trivial filter in $P\bN$. Then $\prod_\cU \sset$ is a model of Martin-L{\"o}f type theory with all the structures stated in \cref{thm:main}. Moreover, $\prod_\cU \sset$ is externally well-pointed, as $1$ has no non-trivial subobjects, see also \cite[Example 7.2]{rasekh2025filtermodelcat} for a more detailed discussion. However, $\prod_\cU \sset$ is not equivalent to $\sset^{Kan}$, as it does not have infinite coproducts, by \cref{ex: filter product n}.
\end{example}

Filter quotient constructions with respect to ultra filters have been used throughout the literature to construct non-standard models of set theory, in the sense of Lawvere \cite{lawvere1964elementarysets}, as well as models for non-standard analysis \cite{palmgren1997nonstandardanalysis}, see \cite[Example 9.45]{johnstone1977topos} for further details. From this perspective \cref{ex:nonstandard model} can justifiably be considered a \emph{non-standard model for spaces} or \emph{non-standard model of homotopy type theory}.

\begin{remark} \label{rem:infinity}
	At some level the implications of the main theorem, meaning \cref{cor:independent infinite colimit,cor:independent standard nno} and the existence of the model in \cref{ex:nonstandard model}, should be unsurprising to experts and consistent with previous expectations. However, these results do provide a more rigorous understanding of the concept of ``infinity'' in homotopy type theory, as we shall explain.
 
	On the one side, homotopy type theory inherently carries an internal notion of infinity, given via its natural numbers and various constructions built on top of it. On the other side, any notion of $\infty$-category is inherently infinite (it involves an infinite layer of morphisms and coherences). Hence the second we aim to construct a model of homotopy type theory, we secretly have two notions of infinity that we are working with, one coming from homotopy type theory, and one from our notion of $\infty$-category that we have chosen to construct our models. In many cases these do coincide. Those cases include the locally presentable models, where the existence of small colimits guarantees that any internal notion of ``infinity'', such as the natural number object, matches with external notions of ``infinity'', in this case meaning the natural number object is given as an infinite coproduct.

	However, in this paper, we have constructed new models where these two notions of infinity visibly diverge, resulting in the kind of results stated above. While this divergence might not always matter, it will play a crucial role when trying to analyze inherently infinite notions. A simple example occurs when we try to compare $\infty$-categories internal to homotopy type theory with $\infty$-categories internal to models, at which point the diverging infinities result in a discrepancy. See \cite{rasekh2025shott} for further details.
\end{remark}

\section{Technicalities and Proofs} \label{sec:technicalities}
This section is dedicated to technical definitions and detailed proofs. As explained in \cref{rem:difficulty}, most model categorical properties straightforwardly transfer from a model category to its filter quotient. However, two properties require more care and are considered separately: the construction of higher inductive types (\cref{subsec:algebras}) and the construction of univalent universes (\cref{subsec:universes}). Having covered the two challenging situations, we can finally complete the proof in \cref{subsec:proof}. 

\subsection{Initial Algebras} \label{subsec:algebras}
In this subsection we prove that filter quotient model categories inherit the construction of higher inductive types via the existence of initial algebras from the original model category. Before we proceed with the proof, we review the original argument by Lumsdaine and Shulman and why it needs to be adjusted \cite{lumsdaineshulman2020goodexcellent}.

Intuitively, for a given monad $\bT\colon \C \to \C$, a higher inductive type is a suitably initial $\bT$-algebra object in $\C$ that is weakly stable under pullbacks. To make this argument precise, the authors proceed with the following three steps.

\begin{enumerate}[leftmargin=*]
	\item First they provide a precise definition of initiality, via weakly stable typal initial $\bT$-algebras with representable lifts \cite[Definition 12.4, Definition 12.5]{lumsdaineshulman2020goodexcellent}, and observe it has the desired semantic implications, giving us higher inductive types \cite[Theorem 12.8]{lumsdaineshulman2020goodexcellent}. 
	\item They then define a notion of \emph{cell monad with parameters} \cite[Definition 12.9]{lumsdaineshulman2020goodexcellent} in an excellent model category. It is point-wise defined as a (possibly infinite) composition of pushouts along monad cells, which are, in turn, monads freely generated by polynomial endofunctors  \cite[Definition 11.10]{lumsdaineshulman2020goodexcellent}. 
	\item Finally, the authors prove that if $\cM$ is an excellent model category, and $\bT$ is a cell monad with parameters, then $\cM$ has weakly stable typal initial $\bT{-}\Algf$-algebras and representable lifts \cite[Theorem 12.13, Theorem 12.14]{lumsdaineshulman2020goodexcellent}.
\end{enumerate}

Unfortunately, in two of these three steps they use the fact that excellent model categories are locally presentable and cofibrantly generated. Indeed, a monad cell is defined as a free monad generated via polynomial functors, which requires local presentability \cite[Lemma 11.9]{lumsdaineshulman2020goodexcellent}, cell monads with parameters are defined as transfinite compositions, which also need infinite colimits \cite[Definition 11.10]{lumsdaineshulman2020goodexcellent}, and the construction of the typal initial $\bT{-}\Algf$-algebra requires the small object argument \cite[Theorem 11.13]{lumsdaineshulman2020goodexcellent}. As local presentability and cofibrant generation are not preserved by the filter quotient construction, we cannot use these assumptions anymore. Instead, we need to isolate the relevant property that will be preserved by the filter quotient construction. This motivates focusing on \emph{fibred categories of structures with parameters (FCoSwP)} \cite[Definition 12.3]{lumsdaineshulman2020goodexcellent}, which is an abstraction employed by the authors to bundle the data of a category of fibrant objects, its category of fibrations and a category of algebras of a monad \cite[Lemma 12.11]{lumsdaineshulman2020goodexcellent}. We hence proceed as follows:

\begin{enumerate}[leftmargin=*]
	\item We prove that a fibred category of structures with parameters on a category induces one on its filter quotient (\cref{prop:fcoswp filter quotient}). 
	\item Moreover, we show that the projection preserves weakly stable typal initial algebras or representable lifts (\cref{thm:filter fcoswp}).
	\item Finally, we define cell monads with parameters on a filter quotient category (\cref{def:cell monad wp}), and prove its category of algebras has weakly stable typal initial algebras and representable lifts (\cref{thm:ia})
\end{enumerate}

These results suggests that instead of excellent model categories, we need good model categories that satisfy the following property \hyperlink{item:ia}{\textbf{IA}}. 

\begin{definition}[Precise formulation of \hyperlink{item:ia}{\textbf{IA}}] \label{def:ia}
	A model category $\cM$ satisfies \hyperlink{item:ia}{\textbf{IA}} if there is a class of fibred monads, the \emph{cell monads}, such that for every cell monad with parameters the associated FCoSwP of $\bT$-algebras has weakly stable typal initial $\bT$-algebras with representable lifts.
\end{definition}

From this perspective the main result can be summarized as stating that the filter quotient construction preserves \hyperlink{item:ia}{\textbf{IA}}. See \cref{thm:ia} for a precise statement. The axiomatic treatment of cell monads in this context might be unexpected, however, we do observe that if we start with the example of cell monads in an excellent model category, then its induced notion of cell monad on filter quotient does not in fact lose any example of interest, and in particular every building block, the monad cells (\cref{prop:free polynomial monad}). Having better understood the required conditions and anticipated results, we now dive into the precise definitions and results.

Let $(\C,\cT)$ be a comprehension category, meaning $\cT$ is a Grothendieck fibration over $\C$ with a Cartesian functor to the target projection $\C^\rightarrow \to \C$. Recall the following notion introduced in \cite[Definition 12.1]{lumsdaineshulman2020goodexcellent}. A \emph{parameter scheme} over $(\C,\cT)$ is a finite set $\cP$ whose elements are either type parameters or term parameters. To every parameter scheme, we associate a fibration, $\Inst(\cP) \to \C$, called the \emph{instantiation of $\cP$} defined inductively via the elements of $\cP$. See \cite[Definition 12.1]{lumsdaineshulman2020goodexcellent} for the inductive characterization.

\begin{definition}
	Let $\C$ be a category. A \emph{fibred category of structures with parameters (FCoSwP)} over $\C$ consists of the following data.
	\begin{enumerate}[leftmargin=*]
		\item A comprehension category $(\C,\cT)$
		\item A finite set $\cP$, called the \emph{parameter scheme}, with elements either \emph{type parameters} or \emph{term parameters}.
		\item The associated \emph{instantiation of $\cP$}, $\Inst(\cP) \to \C$, which is the Grothendieck fibration defined via $\cP$.
		\item A Grothendieck fibration $\cS \to \C$ along with a faithful isofibration $\cS \to \Inst(\cP) \times_{\C} \C^{\rightarrow}$ that is amnestic, meaning the only isomorphism that is mapped to the identity is itself the identity.
	\end{enumerate}
\end{definition}

\begin{notation} \label{not:fcoswp}
	As part of this definition, we adopt the following notational conventions and terminology:
	\begin{itemize}[leftmargin=*]
		\item For a given object $\Gamma$ in $\C$, we use $H$ in $\cT(\Gamma)$ to denote an object in the category $\cT$ over $\Gamma$. We use similar notations for $\cS$ and $\Inst(\cP)$.
		\item For an object $H$ in $\cT(\Gamma)$, we denote its image in $\C^\rightarrow$ by $\Gamma.H \to \Gamma$.
		\item For an object $(\theta,X \to \Gamma)$ in $\Inst(\cP) \times_{\C} \C^{\rightarrow}$ (meaning $\theta$ is in $\Inst(\cP)(\Gamma)$) we denote an object in the fiber an $\cS(\theta)$-structure on $X$.
	\end{itemize}
\end{notation}

We now want to show that an FCoSwP over $\C$ induces an FCoSwP over $\C_\Phi$. This requires several definitions and lemmas. First we have the following simple lemma.

\begin{lemma} \label{lemma:pullback	comprehension}
	Let $(\C,\cT)$ be a comprehension category and $F\colon\F \to \C$ a discrete fibration. Then $(\F,F^*\cT)$ is a comprehension category over $\F$ and the functor $\F^*\cT \to \cT$ preserves and reflects Cartesian morphisms.
\end{lemma}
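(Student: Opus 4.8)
The plan is to unwind the definitions and check the two claims separately, as both are essentially formal consequences of how pullbacks of discrete fibrations interact with Grothendieck fibrations and with the comprehension structure.

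First I would recall that a discrete fibration $F\colon\F\to\C$ has the property that for each object $G$ of $\F$ over $\Gamma=F(G)$ and each morphism $\gamma\colon\Gamma'\to\Gamma$ in $\C$, there is a \emph{unique} lift of $\gamma$ with codomain $G$; equivalently, each fiber is discrete and $F$ is a Grothendieck fibration whose cleavage is strictly functorial and unique. Consequently the pullback $\F^*\cT\to\F$ (whose objects over $G$ are objects of $\cT$ over $F(G)$, with morphisms lying over morphisms of $\F$ and of $\cT$ compatibly) is again a Grothendieck fibration: a Cartesian lift in $\F^*\cT$ of a morphism $g\colon G'\to G$ of $\F$ relative to $H\in\cT(F(G))$ is obtained by taking the Cartesian lift of $F(g)$ in $\cT$ relative to $H$ and pairing it with $g$ — this is well defined precisely because $F$ has unique lifts, so there is no ambiguity in the $\F$-component, and it is Cartesian because Cartesianness is tested by the analogous property in $\cT$. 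Here I would invoke the standard fact (or simply the construction) that pullbacks of Grothendieck fibrations along any functor are Grothendieck fibrations; the discreteness of $F$ makes the cleavage on $\F^*\cT$ particularly transparent.

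Next I would produce the comprehension structure. The comprehension functor $\chi\colon\cT\to\C^\rightarrow$ is Cartesian over $\C$ (it sends Cartesian morphisms to pullback squares). I define $\chi'\colon\F^*\cT\to\F^\rightarrow$ by sending an object $(G,H)$ with $H\in\cT(F(G))$ to the morphism in $\F$ obtained by pulling back $\chi(H)\colon F(G).H\to F(G)$ along... — more carefully, since $F$ is a discrete fibration, for the object $G\in\F$ and the morphism $\chi(H)\colon\Gamma.H\to\Gamma=F(G)$ there is a unique object $G.H\in\F$ over $\Gamma.H$ together with a unique morphism $G.H\to G$ in $\F$ lifting $\chi(H)$; I set $\chi'(G,H)$ to be that morphism. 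On morphisms $\chi'$ is forced by the uniqueness of lifts, and functoriality is automatic for the same reason. That $\chi'$ is Cartesian over $\F$ — i.e.\ sends Cartesian morphisms of $\F^*\cT$ to pullback squares in $\F$ — follows because such a square maps under $F$ to the pullback square $\chi(\text{Cartesian morphism of }\cT)$ in $\C$, and a square in $\F$ whose image under the discrete fibration $F$ is a pullback and which is ``cartesian on objects'' in the evident sense is itself a pullback (discrete fibrations reflect limits of this shape, as the fiber data is uniquely determined). This gives the comprehension category $(\F,F^*\cT)$.

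Finally, the statement that $\F^*\cT\to\cT$ preserves and reflects Cartesian morphisms: this is immediate from the construction of the cleavage above. A morphism of $\F^*\cT$ is a pair $(g,h)$ with $g$ in $\F$ and $h$ in $\cT$ lying over $F(g)$; by the explicit description, $(g,h)$ is Cartesian over $\F$ if and only if $h$ is Cartesian over $\C$, because the $\F$-component is uniquely determined by $F(g)$ and contributes no extra condition (every morphism of $\F$ is, in the relevant fiberwise sense, ``Cartesian'' as $F$ is discrete). Hence the projection $\F^*\cT\to\cT$, which is $(g,h)\mapsto h$, detects Cartesianness in both directions. I do not expect any serious obstacle here; the only mildly delicate point is being careful that the uniqueness of lifts for a discrete fibration really does pin down the $\F$-components of all the structure (objects $G.H$, the comprehension morphisms, and the cleavage), so that everything is strictly well defined rather than merely defined up to isomorphism — but this is exactly what discreteness buys, and it is the reason the lemma is stated for discrete rather than arbitrary fibrations.
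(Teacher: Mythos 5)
Your proof is correct and is essentially the paper's argument in unpacked form: the paper notes that, since $F$ is a discrete fibration, $F^*(\C^{\rightarrow})\cong\F^{\rightarrow}$ over $\F$, so the comprehension structure on $F^*\cT$ is just the pullback of the Cartesian functor $\cT\to\C^{\rightarrow}$ (Grothendieck fibrations and Cartesian functors being pullback-stable), and it cites the standard fact that a morphism of $F^*\cT$ is Cartesian iff its $\cT$-component is. Your hand-built comprehension functor via unique lifts is precisely this identification carried out pointwise, and your observation that discrete fibrations reflect pullbacks plays the role of the pullback-stability of Cartesian functors; both routes are sound and differ only in packaging.
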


\begin{proof}
 As $F$ is a discrete fibration, the pullback of $\C^{\rightarrow} \to \C$ along $F$ is precisely $\F^{\rightarrow} \to \F$. Moreover, Grothendieck fibrations and Cartesian functors are evidently pullback stable. Finally, it is a direct observation that a morphism $(f,g)$ in $F^*\cT$ is Cartesian if and only if $f$ is Cartesian in $\cT$ \cite[Proposition 8.1.15]{borceux1994handbook2}.
\end{proof}

For the next definition, recall that for a comprehension category $(\C,\cT)$, the categories $\D_1(\cT)$ ($\D(\cT)$) have objects dependent projections (display maps), and both have morphisms given by pullback squares. Finally, we denote by $\cT_{\cong} \to \C$ the (faithful) subcategory of $\cT$ with the same objects and only Cartesian morphisms. For more details see \cite[Section 12]{lumsdaineshulman2020goodexcellent}. We need the following observation regarding these definitions.

\begin{lemma} \label{lemma:pullback ddt}
 Let $(\C,\cT)$ be a comprehension category, $F\colon\F \to \C$ a discrete fibration. Then we have $F^*\D(\cT) \cong \D(F^*\cT)$, $F^* \D_1(\cT) \cong \D_1(F^*\cT)$, $F^* \D_{1,*}(\cT) \cong \D_{1,*}(F^*\cT)$ and $F^* \cT_{\cong} \cong (F^*\cT)_{\cong}$.
\end{lemma}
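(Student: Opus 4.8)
The plan is to establish all four isomorphisms by the same device, reducing everything to two features of the discrete fibration $F$. The first, already contained in the proof of \cref{lemma:pullback comprehension}, is that $F^*(\C^{\rightarrow}) \cong \F^{\rightarrow}$ and that the comprehension functor of $F^*\cT$ is identified with $F^*$ of the comprehension functor of $\cT$. The second is the combination of two lifting properties of a discrete fibration: morphisms of $\C$ with a prescribed target lifted to $\F$ lift uniquely, and $F$ creates connected limits, pullbacks in particular. Packaged together, these say that for a commutative square in $\C$ equipped with a lift to $\F$ of one of its objects there is a unique lift of the square to $\F$, and that this lift is a pullback square in $\F$ exactly when the original is a pullback square in $\C$. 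Hence $F^*$ applied to any category that is a full or wide subcategory of a category of pullback squares in $\C$ is, on the nose, the corresponding category of pullback squares in $\F$.

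Granting this, each isomorphism is an unwinding of the definitions of \cite[Section 12]{lumsdaineshulman2020goodexcellent}. For $F^*\D(\cT) \cong \D(F^*\cT)$: an object of $\D(F^*\cT)$ is the image in $\F^{\rightarrow}$ of an object of $F^*\cT$; an object of $F^*\cT$ over $\gamma$ is literally an object $H$ of $\cT$ over $F\gamma$, and, comprehension being computed by $F^*$, its image is the unique $\F$-lift of $F\gamma.H \to F\gamma$, that is, exactly an object of $F^*\D(\cT)$; morphisms on both sides are pullback squares in $\F$ and are matched by the unique square lifting above. The isomorphisms $F^*\D_1(\cT) \cong \D_1(F^*\cT)$ and $F^*\D_{1,*}(\cT) \cong \D_{1,*}(F^*\cT)$ run identically, the only difference being the extra datum these categories record on top of a display map (the object $H$ for $\D_1$, and a further distinguished term for $\D_{1,*}$), which again lifts uniquely along $F$. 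Finally, $F^*\cT_{\cong} \cong (F^*\cT)_{\cong}$ is immediate from the last line of the proof of \cref{lemma:pullback comprehension}: a morphism of $F^*\cT$ is Cartesian precisely when its image in $\cT$ is, so the Cartesian morphisms of $F^*\cT$ are exactly $F^*$ of the Cartesian morphisms of $\cT$.

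The only genuinely delicate point, and where I expect to spend the most care, is checking that these identifications are \emph{strict} isomorphisms of categories, compatible with identities and composition, rather than mere equivalences: one must verify that $F^*$ of a ``category of pullback squares'' equals, and not just is equivalent to, the category of pullback squares over $\F$. This is exactly where discreteness of $F$ (unique rather than merely chosen lifts) enters; once it is carried out for $\D$ it transfers verbatim to $\D_1$, $\D_{1,*}$, and $\cT_{\cong}$.
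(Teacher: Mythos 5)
Your proposal is correct and takes essentially the same route as the paper's (much terser) proof, which likewise reduces all four isomorphisms to \cref{lemma:pullback comprehension} — the identification $F^*(\C^{\rightarrow})\cong\F^{\rightarrow}$, comprehension computed by $F^*$, and preservation/reflection of Cartesian morphisms — with your unique-square-lifting and creation-of-pullbacks argument filling in the morphism-level details the paper leaves implicit. One small caveat: the blanket claim that the lifted square is a pullback in $\F$ ``exactly when'' the original is one in $\C$ does not hold for arbitrary squares over a discrete fibration (creation gives only one direction); it does hold in every instance used here because the comprehension-category structure guarantees that the relevant cospans (a display map against an arbitrary map) already have pullbacks in $\C$.
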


\begin{proof}
 \cref{lemma:pullback comprehension} implies that the pullback of dependent projections (display maps) along the discrete fibrations $F$ are again dependent projection (display maps). This implies the first three isomorphisms. The last part follows from the fact that the pullback preserves and reflects Cartesian morphisms (\cref{lemma:pullback comprehension}).
\end{proof}

\begin{lemma} \label{lemma:pullback instantiation}
	Let $(\cC,\cT)$ be a comprehension category, $\cP$ a parameter scheme and $\Inst(\cP) \to \C$ the associated instantiation. Let $F\colon\F \to \C$ be a discrete fibration. Then there is a pullback parameter scheme $F^*\cP$, for which the associated instantiation is precisely $F^*\Inst(\cP) \to \F$.
\end{lemma}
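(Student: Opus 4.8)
The plan is to argue by induction on the structure of the parameter scheme $\cP$, which by \cite[Definition 12.1]{lumsdaineshulman2020goodexcellent} is a finite list whose entries are type or term parameters, each referring only to the earlier entries. First I would define the \emph{pullback parameter scheme} $F^*\cP$ entrywise: an entry of $\cP$ is a purely combinatorial piece of data (a position, together with, in the term-parameter case, a reference to an earlier type-parameter position and a choice of variables), so $F^*\cP$ is the same list of combinatorial data, now interpreted over the comprehension category $(\F, F^*\cT)$ rather than $(\C,\cT)$. This makes sense because, by \cref{lemma:pullback comprehension}, $(\F, F^*\cT)$ is again a comprehension category and the functor $F^*\cT \to \cT$ preserves and reflects Cartesian morphisms, so every notion appearing in the inductive clauses of \cite[Definition 12.1]{lumsdaineshulman2020goodexcellent} transports along it.

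Next I would prove by induction on the length of $\cP$ that $\Inst(F^*\cP) \cong F^*\Inst(\cP)$ over $\F$, compatibly with the canonical projection functors to the instantiations of the prefixes of $\cP$. The base case is the empty scheme, where $\Inst(\varnothing)$ is the identity on $\C$ and $F^*\id_\C = \id_\F = \Inst(F^*\varnothing)$. For the inductive step, write $\cP = \cP' \sqcup \{p\}$ and let $G\colon \Inst(F^*\cP') \to \Inst(\cP')$ be the discrete fibration obtained from the inductive hypothesis together with the pullback of $F$; discrete fibrations are stable under pullback, so $G$ is a discrete fibration, and the comprehension data obtained by pulling $\cT$ back to $\Inst(\cP')$ pulls back further along $G$ to the comprehension data obtained by pulling $\cT$ back to $\Inst(F^*\cP')$. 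If $p$ is a type parameter, $\Inst(\cP)$ is built from $\Inst(\cP')$ via the construction involving $\D_1$ (or $\cT_{\cong}$) of the comprehension category on $\Inst(\cP')$; if $p$ is a term parameter, $\Inst(\cP)$ is built by pulling back the display map of the referenced type. In either case \cref{lemma:pullback ddt}, applied to the discrete fibration $G$, identifies the construction carried out over $\Inst(F^*\cP')$ with the $G$-pullback of the construction carried out over $\Inst(\cP')$, and pasting of pullback squares then yields $\Inst(F^*\cP) \cong F^*\Inst(\cP)$ over $\F$, compatibly with the projection to $\Inst(F^*\cP')$.

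I expect the main obstacle to be bookkeeping rather than anything conceptual: one has to track carefully that at each stage the relevant change of base is along the composite discrete fibration $\Inst(F^*\cP') \to \Inst(\cP') \to \C$ (equivalently, along $G$), and that the comprehension-category structure fed into the next clause of \cite[Definition 12.1]{lumsdaineshulman2020goodexcellent} is precisely the one obtained by pulling $\cT$ back to $\Inst(\cP')$ and then along $G$. This is exactly where \cref{lemma:pullback comprehension} and \cref{lemma:pullback ddt}, together with the stability of discrete fibrations under pullback and composition, get used repeatedly. Once the induction is set up this way, each clause reduces to a routine pullback-pasting argument, and the compatibility with the projections to prefixes that keeps the induction running is automatic, since all of the isomorphisms in play are the canonical comparison maps between iterated pullbacks.
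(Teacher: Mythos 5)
Your proposal is correct and follows essentially the same route as the paper: a structural induction over the clauses of the parameter scheme, with \cref{lemma:pullback comprehension} and \cref{lemma:pullback ddt} supplying the identifications and pullback pasting/cancellation closing each step. The only cosmetic difference is that you perform the pasting along the intermediate discrete fibration $\Inst(F^*\cP') \to \Inst(\cP')$, whereas the paper pulls everything down to $\C$ and $\F$ and cancels there; the substance is the same.
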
 

\begin{proof}
 We use the mutually inductive definition in \cite[Definition 12.1]{lumsdaineshulman2020goodexcellent} to prove the result similarly via mutual induction.
	\begin{itemize}[leftmargin=*]
		\item If $\cP$ is empty, then we take $F^*\cP$ to be empty as well, and we indeed have $F^*\C = \F$.
		\item Let us assume that  $\cP$ is extended by a type parameter $\alpha\colon\Inst(\cP) \to \D(\cT)$. Then $\Inst(\langle F^*\cP,F^*\alpha\rangle) \to \Inst(\langle F^*\cP\rangle)$ is by definition the pullback of $F^*\cT_{\cong} \to \F$ and hence by pullback gluing, following \cref{lemma:pullback ddt}, also the pullback of $\cT_{\cong} \to \C$. Similarly, $\Inst(\langle\cP,\alpha\rangle) \to \Inst(\cP)$ is the pullback of $\cT_{\cong} \to \C$, by definition. Hence, by pullback cancellation we get the desired isomorphism $F^*\Inst(\langle\cP,\alpha\rangle) \cong \Inst(\langle F^*\cP,F^*\alpha\rangle)$. This means we extend $F^*\cP$ by the type parameter $F^*\alpha\colon F^*\Inst(\cP) \to \D(F^*\cT)$, using the fact that $\D(F^*\cT) \cong F^*\D(\cT)$ (\cref{lemma:pullback ddt}), finishing this induction step.
		\item Let us assume that $\cP$ is extended by a term parameter $\alpha\colon\Inst(\cP) \to \D(\cT)$, $\beta\colon \Inst(\cP) \to \D_1(\cT)$. Then, employing an analogous pullback cancellation argument as in the previous step, it follows that $F^*\Inst(\llbracket\cP,\alpha,\beta\rrbracket) \cong \Inst(\llbracket F^*\cP,F^*\alpha,F^*\beta\rrbracket)$. This means we extend $F^*\cP$ by the term parameter $F^*\alpha\colon F^*\Inst(\cP) \to \D(F^*\cT)$ and $F^*\beta\colon F^*\Inst(\cP) \to \D_1(F^*\cT)$, finishing the induction step.\qedhere
	\end{itemize}
\end{proof}	

\begin{lemma} \label{lemma:pullback fcoswp}
 Let $F\colon\F \to \C$ be a discrete fibration, and $(\C,\cT,\cS,\cP, \Inst(\cP))$ an FCoSwP over $\C$. Then \[(\F,F^*\cT,F^*\cS, F^*\cP, \Inst(F^*\cP))\] is an FCoSwP over $\F$.
\end{lemma}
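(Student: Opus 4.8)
The plan is to check, one by one, the four pieces of data that constitute an FCoSwP over $\F$, reusing the preceding lemmas together with the general principle that the relevant classes of functors are stable under pullback. Condition (1), that $(\F, F^*\cT)$ is a comprehension category, is precisely \cref{lemma:pullback comprehension}. Condition (2) is supplied by \cref{lemma:pullback instantiation}, which constructs the parameter scheme $F^*\cP$; since it is assembled by running the same mutually inductive recipe as for $\cP$, adding exactly one parameter per element of $\cP$, it is again a finite set with $|F^*\cP| = |\cP|$. Condition (3), that the instantiation attached to $F^*\cP$ is the Grothendieck fibration $\Inst(F^*\cP) \to \F$, is likewise part of \cref{lemma:pullback instantiation}, which in addition identifies $\Inst(F^*\cP) \cong F^*\Inst(\cP)$ over $\F$.

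The work is in condition (4). First I would note that $F^*\cS \to \F$ is a Grothendieck fibration, since Grothendieck fibrations are pullback-stable (already invoked in the proof of \cref{lemma:pullback comprehension}). Next I would identify the intended codomain: as $F$ is a discrete fibration we have $F^*\C^{\rightarrow} \cong \F^{\rightarrow}$ over $\F$ (again from the proof of \cref{lemma:pullback comprehension}), and combining this with $F^*\Inst(\cP) \cong \Inst(F^*\cP)$ and the fact that iterated pullbacks commute yields a canonical isomorphism
\[
F^*\bigl(\Inst(\cP) \times_{\C} \C^{\rightarrow}\bigr) \;\cong\; \Inst(F^*\cP) \times_{\F} \F^{\rightarrow}
\]
over $\F$. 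Pulling the given faithful, amnestic isofibration $\cS \to \Inst(\cP) \times_{\C} \C^{\rightarrow}$ back along $F$ then produces a functor $F^*\cS \to \Inst(F^*\cP) \times_{\F} \F^{\rightarrow}$, and it remains to see that it again enjoys these three properties.

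Faithfulness and the isofibration property transfer along pullback by standard arguments in $\cat$. For amnesticity, I would argue that it too is pullback-stable in this situation: if $\phi = (a,s)$ is an isomorphism in $F^*\cS = \F \times_{\C} \cS$ whose image in $\Inst(F^*\cP) \times_{\F} \F^{\rightarrow}$ is an identity, then, since the composite $F^*\cS \to \Inst(F^*\cP) \times_{\F} \F^{\rightarrow} \to \Inst(\cP) \times_{\C} \C^{\rightarrow}$ factors through the projection $F^*\cS \to \cS$, the component $s$ is sent to an identity of $\Inst(\cP) \times_{\C} \C^{\rightarrow}$; amnesticity of the original isofibration forces $s$ to be an identity, whence $a$ is a morphism of $\F$ lying over an identity of $\C$, and for a discrete fibration the only lift of an identity is an identity, so $a$ is an identity and therefore $\phi$ is an identity. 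With (4) established, all four conditions hold and $(\F, F^*\cT, F^*\cS, F^*\cP, \Inst(F^*\cP))$ is an FCoSwP over $\F$.

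I expect the only real friction to be the bookkeeping behind the codomain identification in (4) — keeping track of which projection ($\C^{\rightarrow} \to \C$ is the codomain one) is in play and confirming that the pulled-back isofibration has codomain $\Inst(F^*\cP) \times_{\F} \F^{\rightarrow}$ rather than merely $F^*(\Inst(\cP) \times_{\C} \C^{\rightarrow})$; once the displayed isomorphism is in hand, everything else reduces to the routine facts that Grothendieck fibrations, faithful functors, isofibrations and amnestic functors are closed under pullback, plus the discrete-fibration lifting property used to close out amnesticity. There is correspondingly no deep obstacle here: the hypothesis that $F$ is a discrete fibration, rather than an arbitrary functor, is exactly what makes the earlier lemmas — and hence this assembly — go through.
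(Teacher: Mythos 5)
Your proof is correct and follows essentially the same route as the paper: conditions (1)--(3) are handled by \cref{lemma:pullback comprehension} and \cref{lemma:pullback instantiation}, and condition (4) by pullback-stability of Grothendieck fibrations, isofibrations, faithful functors and amnestic functors, which the paper simply asserts as evident. Your extra detail (the identification $F^*(\Inst(\cP)\times_\C\C^{\rightarrow})\cong\Inst(F^*\cP)\times_\F\F^{\rightarrow}$ and the explicit amnesticity check) is sound, though amnesticity transfers along pullbacks even without invoking the discrete-fibration lifting property.
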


\begin{proof}
	First of all, Grothendieck fibrations, isofibrations, faithful functors and amnestic functors are evidently pullback stable. Moreover, by \cref{lemma:pullback comprehension}, $F^*\cT$ is still a comprehension category. Finally, by \cref{lemma:pullback instantiation}, $F^*\cP$ is a parameter scheme with instantiation $\Inst(F^*\cP)$. Hence we are done.
\end{proof}

Recall that for an object $C$ in $\C$, the projection $\pi_D\colon\C_{/D} \to \C$ is a discrete fibration, motivating the following definition. 

\begin{definition} \label{def:pullback fcoswp}
	Let $(\C,\cT,\cS,\cP, \Inst(\cP))$ be an FCoSwP over $\C$ and $C$ an object in $\C$. Let $(\C_C,\cT_C,\cS_C,\cP_C, \Inst(\cP_C))$ denote the FCoSwP obtained by applying \cref{lemma:pullback fcoswp} to $\pi_C\colon\C_{/C} \to \C$. 
\end{definition}

We record here the following basic fact about Grothendieck fibrations. 

\begin{lemma} \label{lemma:fibration terminal}
 Let $F\colon\F \to \C$ be a Grothendieck fibration. If $\F$ has a terminal object $\hat{1}$, then $F(\hat{1})$ in $\C$ is subterminal. Moreover, if an object in $\C$ does not admit a morphism to $F(\hat{1})$, meaning it is not in the full subcategory $\C_{F(\hat{1})}$, then its fiber is empty.
\end{lemma}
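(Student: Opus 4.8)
The plan is to prove the two assertions separately, each by playing the universal property of the terminal object $\hat 1$ off against the existence of Cartesian lifts for $F$.

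I would start with the ``moreover'' clause, which is the quicker one. Suppose $C$ admits no morphism to $F(\hat 1)$, and suppose toward a contradiction that $\F_C$ contains an object $E$, so $F(E) = C$. Terminality of $\hat 1$ gives a morphism $E \to \hat 1$ in $\F$, and applying $F$ produces a morphism $C = F(E) \to F(\hat 1)$ in $\C$, a contradiction; hence $\F_C = \varnothing$ whenever $C \notin \C_{F(\hat 1)}$.

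For the first claim, let $f, g\colon X \to F(\hat 1)$ be two morphisms; the goal is $f = g$. Since $F$ is a Grothendieck fibration, choose Cartesian lifts $\varphi\colon f^{*}\hat 1 \to \hat 1$ over $f$ and $\psi\colon g^{*}\hat 1 \to \hat 1$ over $g$, so $f^{*}\hat 1$ and $g^{*}\hat 1$ both live in the fiber $\F_X$; by terminality of $\hat 1$, $\varphi$ is \emph{the} morphism $f^{*}\hat 1 \to \hat 1$ and $\psi$ is \emph{the} morphism $g^{*}\hat 1 \to \hat 1$. The key step is to produce a morphism $\iota\colon g^{*}\hat 1 \to f^{*}\hat 1$ over $\id_X$: given such an $\iota$, the composite $\varphi \circ \iota\colon g^{*}\hat 1 \to \hat 1$ lies over $f \circ \id_X = f$, but it must coincide with the unique such morphism $\psi$, which lies over $g$; comparing images under $F$ yields $f = g$. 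To obtain $\iota$ I would show that $f^{*}\hat 1$ is terminal in $\F_X$ -- it is the reindexing of the terminal object $\hat 1$ along $f$, and reindexing along a base morphism should carry a terminal object to a terminal object of the target fiber, so that the canonical map from any $E \in \F_X$ to $\hat 1$ factors uniquely through $\varphi$ over $\id_X$; taking $E = g^{*}\hat 1$ then gives $\iota$.

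The step I expect to be the main obstacle is exactly this verification that $f^{*}\hat 1$ is terminal in $\F_X$: factoring the canonical map $E \to \hat 1$ through $\varphi$ over $\id_X$ presupposes that its image under $F$ equals $f$, which is uncomfortably close to the conclusion itself, so one must be careful not to argue in a circle. I would close this by exploiting the concrete shape of the fibrations to which the lemma is applied -- those coming from fibred categories of structures with parameters, assembled from posets of display maps together with faithful, amnestic data as in \cref{lemma:pullback fcoswp} -- whose fibers are rigid enough to force the factorization.
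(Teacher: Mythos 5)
The ``moreover'' clause of your argument is correct and complete, and in fact it uses only functoriality of $F$ and terminality of $\hat 1$, not the fibration structure. The main clause, however, has a genuine gap at exactly the step you flagged: nothing guarantees that $f^*\hat 1$ is terminal in the fiber $\F_X$, nor even that some morphism $g^*\hat 1 \to f^*\hat 1$ over $\id_X$ exists. Terminality of $\hat 1$ in the total category $\F$ does not make $\hat 1$ terminal in its own fiber (the unique map from a fiber object to $\hat 1$ may lie over a non-identity endomorphism of $F(\hat 1)$), and reindexing along $f$ has no reason to produce a fiberwise terminal object. Worse, this step cannot be repaired at the stated level of generality: let $\C$ have two objects $X,Y$ and two parallel non-identity arrows $f,g\colon X \to Y$, and let $\F$ be the cospan $E \xrightarrow{\ \varphi\ } T \xleftarrow{\ \psi\ } E'$ with $F(E)=F(E')=X$, $F(T)=Y$, $F(\varphi)=f$, $F(\psi)=g$. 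Then $T$ is terminal in $\F$, and $\varphi$ is Cartesian over $f$ (the only morphisms into $T$ are $\varphi$, $\psi$, $\id_T$; for $\psi$ and $\id_T$ the required factorizations of their images through $f$ do not exist, and for $\varphi$ the unique filler is $\id_E$), and symmetrically $\psi$ is Cartesian over $g$, so $F$ is a (split) Grothendieck fibration; yet $F(T)=Y$ receives the two distinct maps $f,g$ and is not subterminal. Here $\F_X$ is discrete with $f^*\hat 1 = E$, $g^*\hat 1 = E'$ and no morphism $E' \to E$, which is precisely where your construction of $\iota$ breaks down.

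Consequently your closing move --- appealing to the concrete fibrations arising from FCoSwPs as in \cref{lemma:pullback fcoswp} --- cannot yield a proof of \cref{lemma:fibration terminal} as written, since the statement quantifies over arbitrary Grothendieck fibrations; at best it proves a variant with extra standing hypotheses. (Note the paper records this lemma as a basic fact without proof, so there is no argument there to compare against.) Your outline does go through verbatim if one adds such a hypothesis, e.g.\ that the fibration has a fibred terminal object (fiberwise terminal objects preserved by reindexing): one first checks that $\hat 1$ is then terminal in its own fiber, whence $f^*\hat 1$ is terminal in $\F_X$, producing $\iota$ and completing your comparison of $\varphi\circ\iota$ with $\psi$. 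As it stands, though, the key representability-free step is unjustified and, by the counterexample above, unjustifiable without strengthening the hypotheses.
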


\begin{remark} \label{rem:terminal}
	\cref{lemma:fibration terminal} implies that for a Grothendieck fibration $F\colon \F \to \C$ with terminal object $\hat{1}$, we can without loss of generality restrict the codomain to $\C_{F(\hat{1})}$, at which point the image of $\hat{1}$ will in fact be terminal. We will henceforth assume that if $\F$ has a terminal object, its image is indeed terminal in $\C$.
\end{remark}

\begin{notation}
 For a given category $\C$ with finite products and subterminal object $U$, we denote the functor $- \times U\colon \C \to \C_U$ by $P_U\colon \C \to \C_U$. 	
\end{notation}

\begin{lemma} \label{lemma:adj fcoswp}
	Let $\C$ be a category, $F\colon\F \to \C$ a Grothendieck fibration, and $U$ a subterminal object in $\C$. 
	\begin{enumerate}[leftmargin=*]
	 \item The fully faithful functor $\F_U = \C_U \times_\C \F \to \F$ admits a Cartesian right adjoint that makes the following diagram of adjunctions commute 
	 \[
	 \begin{tikzcd}[column sep= 2cm ]
		 \F_{U} \arrow[d, "F_U"]  \arrow[r, bend left=15, "\pi"] \arrow[r, leftarrow, bend right=15, "\bot", dashed]& \F \arrow[d, "F"] \\ 
		 \C_{U} \arrow[r, bend left=15, "\pi"] \arrow[r, leftarrow, bend right=15, "\bot", "P_U"'] & \C
	 \end{tikzcd}.
	 \]
		\item Assume that $\F$ has a terminal object $\hat{1}$ (recall \cref{rem:terminal}). Let $\hat{U}$ denote the domain of the Cartesian lift of $U \to 1$ along $\hat{1}$. Then $\hat{U}$ is subterminal and the right adjoint to $\F_U$ is given by $P_{\hat{U}}$.
	\end{enumerate}
\end{lemma}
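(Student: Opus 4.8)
The plan is to build the right adjoint $R\colon\F\to\F_U$ directly from a cleavage of $F$, exploiting the fact that for a subterminal $U$ the coreflection $P_U=-\times U$ is computed by the projections $\mathrm{pr}_X\colon X\times U\to X$.

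\emph{Part (1).} Fix a cleavage of $F$. For $Z$ in $\F$ with $F(Z)=X$, set $R(Z)\coloneqq(\mathrm{pr}_X\colon X\times U\to X)^{*}Z$, with Cartesian lift $c_Z\colon R(Z)\to Z$ over $\mathrm{pr}_X$. Since $F(R(Z))=X\times U$ carries the projection to $U$, we have $R(Z)\in\F_U$; functoriality of $R$ and naturality of $c$ are then forced by the Cartesian universal property, and by construction $F_U R=P_U F$ on the nose and $F$ carries $c_Z$ to the counit $\mathrm{pr}_X$ of $(\iota_{\C_U}\dashv P_U)$. To see $\iota_{\F_U}\dashv R$, let $W\in\F_U$, $Z\in\F$: a morphism $\psi\colon W\to Z$ in $\F$ lies over some $g\colon F(W)\to X$, and since $F(W)\in\C_U$ there is a unique $\tilde g\colon F(W)\to X\times U$ with $\mathrm{pr}_X\tilde g=g$ (namely $(g,F(W)\to U)$); the Cartesian property of $c_Z$ then yields a unique $W\to R(Z)$ over $\tilde g$ composing with $c_Z$ to $\psi$. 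This is a natural bijection $\F(W,R(Z))\cong\F(W,Z)$, and compatibility of the resulting unit/counit squares with $F$ is immediate from the displayed identities, so the diagram of adjunctions commutes.

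\emph{$R$ is Cartesian.} Let $\psi\colon Z_1\to Z_2$ be $F$-Cartesian over $g\colon X_1\to X_2$. Then $R(\psi)$ lies over $P_U(g)=g\times U$, and $c_{Z_2}\circ R(\psi)=\psi\circ c_{Z_1}$ lies over the pullback square $X_1\times U\to X_2\times U\xrightarrow{\mathrm{pr}}X_2$ equalling $X_1\times U\xrightarrow{\mathrm{pr}}X_1\xrightarrow{g}X_2$. The composite $\psi\circ c_{Z_1}$ is $F$-Cartesian (a composite of Cartesian morphisms) and $c_{Z_2}$ is $F$-Cartesian, so by the standard cancellation (if $h$ and $hf$ are Cartesian then so is $f$) $R(\psi)$ is $F$-Cartesian; since $\C_U\hookrightarrow\C$ is full, a morphism of $\F_U$ is $F_U$-Cartesian iff it is $F$-Cartesian, so $R$ is a Cartesian functor. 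This completes Part (1).

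\emph{Part (2).} By \cref{rem:terminal} we may assume $F(\hat{1})=1$ is terminal in $\C$. Let $u\colon\hat{U}\to\hat{1}$ be the Cartesian lift of $U\to 1$ along $\hat{1}$, so $F(\hat{U})=U$. For any $Z$, a morphism $Z\to\hat{U}$ lies over the (unique, if it exists) morphism $F(Z)\to U$, and its composite with $u$ is the unique morphism $Z\to\hat{1}$; the Cartesian property of $u$ shows there is at most one such lift, so $\hat{U}$ is subterminal, and moreover $Z\in\F_{\hat{U}}$ iff $F(Z)\in\C_U$, i.e. $\F_{\hat{U}}=\F_U$. It remains to identify $R(Z)$ with the product $Z\times\hat{U}$ in $\F$: besides $c_Z\colon R(Z)\to Z$, the object $R(Z)$ carries a canonical $d_Z\colon R(Z)\to\hat{U}$ over $\mathrm{pr}_U\colon X\times U\to U$, obtained by applying the Cartesian property of $u$ to the unique map $R(Z)\to\hat{1}$. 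For $W\in\F$, if $F(W)\notin\C_U$ then $\F(W,\hat{U})=\varnothing=\F(W,R(Z))$ (as $F(R(Z))=X\times U\in\C_U$), while if $F(W)\in\C_U$ then $\F(W,\hat{U})$ is a singleton and $\F(W,R(Z))=\F_U(W,R(Z))\cong\F(W,Z)$ by Part (1); in either case $(c_Z,d_Z)_{*}$ is a bijection onto $\F(W,Z)\times\F(W,\hat{U})$. Hence $-\times\hat{U}$ exists, lands in $\F_{\hat{U}}=\F_U$, and coincides with $R$, so $P_{\hat{U}}=R$ is the claimed right adjoint.

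\emph{Main obstacle.} Nothing here is deep, but the proof is all fibrational bookkeeping: the two composability facts for Cartesian morphisms, the transfer of Cartesian-ness along the full inclusion $\C_U\hookrightarrow\C$, and the point that although $\F$ need not admit all binary products it does admit products with $\hat{U}$, computed by the cleavage. The one step that genuinely needs care is keeping the chosen cleavage consistent so that the adjunction squares commute strictly rather than merely up to coherent isomorphism.
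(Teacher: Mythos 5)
Your proposal is correct and follows essentially the same route as the paper: the right adjoint is built objectwise as the Cartesian lift of $\mathrm{pr}\colon F(Z)\times U\to F(Z)$, the adjunction is verified by the unique factorization through that lift (using subterminality of $U$), Cartesianness follows from the cancellation property for Cartesian morphisms, and part (2) is handled by showing $\hat{U}$ is subterminal and terminal in $\F_U$ so that the adjoint is computed as $-\times\hat{U}$. Your extra bookkeeping (the $F_U$- versus $F$-Cartesian comparison and the explicit product universal property in (2)) just fills in details the paper leaves as "formal implications."
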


\begin{proof}
	$(1)$ First we specify the right adjoint on objects. For a given object $X$ in $\F$, let $\alpha_X\colon X_U \to X$ denote the Cartesian lift of $\pi\colon F(X) \times U \to F(X)$ along $X$. By construction, $X_U$ is an object in $\F_U$, as $F(X_U) = X \times U$ lands in $\C_U$. This is the value of the functor on $X$. Next, we specify the right adjoint on morphisms. For a given morphism $f\colon X \to Y$ in $\F$, using the fact that $Y_U \to Y$ is Cartesian, the following diagram admits a factorization
	\begin{equation} \label{eq:cart lift}
		\begin{tikzcd}
			X_U \arrow[r, "f_U", dashed] \arrow[d, "\alpha_X"] & Y_U \arrow[d, "\alpha_Y"'] \\ 
			X \arrow[r, "f"] & Y
		\end{tikzcd}
	\end{equation} 
	making the diagram commute. This is the value of the functor for a morphism $f$. Functoriality immediately follows from the fact that Cartesian lifts are unique.	

	We now show that this functor is indeed the right adjoint to the inclusion $\F_U \to \F$. It suffices to observe that every arbitrary morphism $g\colon Z \to X$ in $\F$, with $Z$ in $\F_U$, uniquely factors through $X_U \to X$. By assumption $F(Z)$ admits a map to $U$, meaning there is a factorization $F(Z) \to F(X) \times U \to F(X)$ of $F(g)$. Hence, using the fact that $\alpha_X\colon X_U \to X$ is the Cartesian lift of $F(X) \times U \to F(X)$ along $X$, there is indeed the unique factorization $Z \to X_U \to X$ of $g$. Finally, if $f$ is a Cartesian morphism, then in \ref{eq:cart lift} the bottom and vertical morphisms are Cartesian, which implies that the top map $f_U$ is also Cartesian, which proves the right adjoint preserves Cartesian morphisms.

	$(2)$ First we prove that $\hat{U}$ is subterminal. Let $f,g \colon Z \to \hat{U}$ be two morphisms in $\F$. Then $F(f) = F(g)$ as both have codomain the subterminal object $U$, meaning both are lifts of $F(Z) \to U \to 1$, along the Cartesian morphism $\hat{U} \to \hat{1}$ and hence, by uniqueness, need to be equal. We now want to observe that $P_{\hat{U}}$ is indeed the right adjoint to the inclusion $\F_U \to \F$. This will be a formal implication of the fact that $\hat{U}$ is terminal in $\F_U$, which follows directly. Indeed, for an arbitrary object $Z$ in $\F_U$, by definition there is a factorization $F(Z) \to Y \to 1$, which means the unique map $Z \to \hat{1}$ factors through the Cartesian morphism $\hat{U} \to \hat{1}$, giving us $Z \to \hat{U} \to \hat{1}$.
\end{proof}

Before we proceed we recall the following technical result regarding filter quotients, that plays an important role throughout this section. For a proof see \cite[Theorem V.9.2]{maclanemoerdijk1994topos}.

\begin{lemma} \label{lemma:filtered colimits filter quotients}
	Let $\C$ be a category with with finite products and $\Phi$ be a filter of subterminal objects. Then $\C_\Phi$ is the filtered	colimit of the diagram $\C_{(-)}\colon\Phi^{op} \to \cat$, where functoriality is given by mapping $V \leq U$, to $P_V\colon \C_U \to \C_V$.
\end{lemma}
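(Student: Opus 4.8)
The plan is to unwind \cref{def:filter quotient category} and recognize the coproduct-modulo-$\sim_\Phi$ that defines each hom set of $\C_\Phi$ directly as a filtered colimit of the hom sets of the categories $\C_U$. First I would record the shape of the indexing: being a filter on $\Sub(\C)$, the set $\Phi$ is nonempty and downward directed under $\leq$, hence $\Phi^{op}$ is a filtered category and $\colim_{U\in\Phi^{op}}\C_{(-)}$ is a filtered colimit. Next I would recall that $\C_U$ is the filter quotient of $\C$ by the principal filter $\uparrow U=\{W\in\Sub(\C): W\geq U\}$: it has the same objects as $\C$, and since $U$ is the least element of $\uparrow U$ its hom sets collapse to $\C_U(X,Y)\cong\C(X\times U,Y)$, while for $V\leq U$ the transition functor $P_V\colon\C_U\to\C_V$ is the identity on objects and, under this identification, is precomposition with $\id_X\times(V\to U)\colon X\times V\to X\times U$. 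Since all categories in the diagram $\C_{(-)}\colon\Phi^{op}\to\cat$ therefore share the object set $\mathrm{ob}\,\C$, with every transition functor the identity on objects, the filtered colimit in $\cat$ is computed levelwise: it has object set $\mathrm{ob}\,\C=\mathrm{ob}\,\C_\Phi$ and hom sets $\colim_{U\in\Phi^{op}}\C_U(X,Y)$.

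It then remains to identify $\colim_{U\in\Phi^{op}}\C(X\times U,Y)$ with $\C_\Phi(X,Y)$ for each pair $X,Y$. A filtered colimit of sets along a directed system is the disjoint union of the terms modulo eventual equality, which here is $\big(\coprod_{W\in\Phi}\C(X\times W,Y)\big)$ modulo the relation identifying $f\colon X\times W_1\to Y$ with $g\colon X\times W_2\to Y$ exactly when their restrictions to some common lower bound $W\in\Phi$ of $W_1$ and $W_2$ agree. But this is verbatim the set $\C_\Phi(X,Y)$ from \cref{def:filter quotient category}, whose defining relation $\sim_\Phi$ is precisely ``equal after restriction to some $W\in\Phi$ with $W\leq W_1,W_2$''. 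Because composition and identities in each $\C_U$ and in $\C_\Phi$ are defined on representatives and commute with the restriction maps, these bijections are functorial in $X,Y$, giving an isomorphism of categories $\colim_{U\in\Phi^{op}}\C_{(-)}\cong\C_\Phi$; and by construction this isomorphism takes the colimit cocone $\C_U\to\colim$ to the canonical comparison functors $\C_U\to\C_\Phi$, so in particular $P_\Phi$ factors as $\C\xrightarrow{P_U}\C_U\to\C_\Phi$ for each $U\in\Phi$, which is exactly the asserted presentation of $\C_\Phi$ as a filtered colimit.

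I do not expect a substantial obstacle here. The only point needing genuine care is matching the two descriptions of the same quotient set: the ``eventual equality'' built into a filtered colimit of sets versus the relation $\sim_\Phi$ of \cref{def:filter quotient category}. These coincide because in either case a witness for the identification may be taken to be any element of $\Phi$ lying below the two domains in question. The remaining points are routine bookkeeping — that filtered colimits in $\cat$ are indeed computed levelwise on objects and morphisms, so that the constancy of the object sets reduces everything to the hom-set computation, and that the comparison is compatible with the cocone, so that one recovers $\C_\Phi$ together with its projection $P_\Phi$ and not merely an abstractly isomorphic category.
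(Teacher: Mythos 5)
Your overall strategy --- use that $\Phi^{op}$ is filtered, compute the filtered colimit in $\cat$ levelwise, and identify the colimit of hom-sets with the quotient by $\sim_\Phi$ from \cref{def:filter quotient category} --- is the standard argument; the paper itself offers no proof beyond citing Mac Lane--Moerdijk, and your hom-set comparison (eventual equality along $\Phi$ versus the relation $\sim_\Phi$) is exactly the heart of the matter and is correct.

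There is, however, a concrete mismatch with the diagram the lemma actually refers to. In this paper $\C_U$ is \emph{not} the filter quotient of $\C$ at the principal filter $\uparrow U$ (a category with all objects of $\C$ and homs $\C(X\times U,Y)$), but the full subcategory of $\C$ on objects admitting a morphism to $U$ (see \cref{lemma:fibration terminal} and the adjunction $\pi\dashv P_U$ in \cref{lemma:adj fcoswp}), and the transition functors $P_V=-\times V\colon\C_U\to\C_V$ are not the identity on objects. So your claim that all categories in the diagram share the object set of $\C$ with identity-on-objects transitions fails for the paper's diagram, and with it the shortcut that reduces everything to hom-sets; note that a strict colimit in $\cat$ is not automatically invariant under replacing a diagram by a levelwise equivalent one, so you cannot silently substitute your model of $\C_U$ for the paper's. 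The repair is routine but needs to be said: an object of $\colim_{U\in\Phi^{op}}\C_U$ is an object of $\C$ lying over some $U\in\Phi$, modulo the identifications generated by $X\sim X\times V$; the comparison functor to $\C_\Phi$ is essentially surjective because $X\cong X\times U$ in $\C_\Phi$, and fully faithful because subterminality of $W$ gives $\C_W(X\times W,Y\times W)\cong\C(X\times W,Y)$, so that $\colim_{W}\C_W(X\times W,Y\times W)\cong\C_\Phi(X,Y)$ by exactly your eventual-equality argument. Accordingly the correct conclusion for the paper's diagram is an equivalence $\colim_{U\in\Phi^{op}}\C_U\simeq\C_\Phi$ (which is what the paper uses afterwards when it says objects of $\C_\Phi$ are of the form $U\times X$), rather than the isomorphism of categories your constant-object-set reading would produce.
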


In this section we will primarily use this description of the filter quotient as a filtered colimit. From this perspective an object in $\C_\Phi$ is of the form $U \times X$, where $U$ is in $\Phi$ and $X$ in $\C$, and a morphism from $X \to Y$, is a morphism of the form $f\colon U \times X \to U \times Y$, where objects and morphisms are identified in a manner similar to \cref{def:filter quotient category}. We now use this description as a filtered colimit, to show that all properties of FCoSwP transfer to the filter quotient. First we record the following basic result regarding Grothendieck fibrations.

\begin{lemma}\label{lemma:filtered colimit fibration}
	The filtered colimit of Grothendieck fibrations and Cartesian functors is a Grothendieck fibration. Similarly, the filtered colimit of isofibrations is an isofibration.
\end{lemma}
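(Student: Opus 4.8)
The plan is to work with the explicit model of filtered colimits of categories already invoked in \cref{lemma:filtered colimits filter quotients}: if $(F_i\colon \F_i \to \C_i)_i$ is our filtered system of Grothendieck fibrations and Cartesian functors, then $\F = \colim_i \F_i$ and $\C = \colim_i \C_i$ are computed levelwise in $\cat$, every object and morphism of $\F$ or $\C$ admits a representative at some finite stage, any finite configuration of such data can be pushed forward to a common stage, and two parallel morphisms with representatives at stage $i$ are equal in the colimit precisely when their images coincide at some later stage $j \geq i$. The induced functor $F\colon \F \to \C$ is what we must show is a Grothendieck fibration (resp. an isofibration).

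For the fibration statement, given an object $e$ in $\F$ and a morphism $f\colon c' \to F(e)$ in $\C$, I would first choose a stage $i$ and representatives $e_i, c'_i, f_i$, enlarging $i$ so that $F_i(e_i)$ is literally the chosen representative of $F(e)$ (possible since $F(e)$ and $F_i(e_i)$ agree in the colimit). Using that $F_i$ is a fibration, take a Cartesian lift $\bar f_i$ of $f_i$ in $\F_i$ and let $\bar f$ be its image in $\F$; then clearly $F(\bar f) = f$. The substance is verifying that $\bar f$ is Cartesian: given a test morphism $g\colon e'' \to e$ and a factorization $F(g) = F(\bar f)\circ h$, push $g$, $h$, $e''$ and the factorization identity forward to a common stage $k \geq i$ at which the identity already holds; there the image $\bar f_k$ of $\bar f_i$ is still a Cartesian lift of $f_k$ because the transition functor $\F_i \to \F_k$ is Cartesian, so its universal property produces the required factorization at stage $k$, which one then transports back to $\F$. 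Uniqueness is handled by the same ``pass to a later stage where all the defining equations hold'' device, again invoking that transition functors preserve Cartesian morphisms.

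The isofibration statement follows by the same method but using only the existence of lifts: represent $e$, the isomorphism $f$, and its inverse at a common stage, pass to a later stage where $f$ becomes an honest isomorphism and where $F_i(e_i)$ is the representative of $F(e)$, apply the isofibration property of $F_i$ to get an isomorphism lifting $f_i$, and take its image in $\F$, which is an isomorphism over $f$.

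I expect the only genuine point of care — and the reason the hypothesis ``and Cartesian functors'' appears in the statement — to be the preservation of Cartesian morphisms along the transition functors: this is exactly what guarantees that the lift $\bar f_i$ remains Cartesian after being pushed to the later stages where the test data and the uniqueness comparisons live, and without it the argument would break. Everything else is routine colimit-chasing of the same kind already used implicitly throughout this section.
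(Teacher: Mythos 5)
Your argument is correct, and it is the standard filtered-colimit-chasing proof: the paper in fact states \cref{lemma:filtered colimit fibration} as a ``basic result'' with no written proof, so your write-up supplies exactly the argument the paper leaves implicit (representatives at finite stages, lifting at a stage, pushing test data to a common later stage, and using that the transition functors are Cartesian so the chosen lift stays Cartesian). Your closing remark correctly isolates the one genuinely load-bearing hypothesis, namely Cartesianness of the transition functors; the isofibration case needs nothing beyond representing the isomorphism and its inverse at a stage where the inverse equations hold.
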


\begin{lemma} \label{lemma:filter quotient fibration}
	Let $F\colon\F \to \C$ be a Grothendieck fibration, and let $\Phi$ be a filter of subterminal objects on $\C$.
	\begin{enumerate}[leftmargin=*]
		\item The filter $\Phi$ induces a diagram of Grothendieck fibrations $\F_U \to \C_U$ and Cartesian functors, compatible with the functors $P_V\colon \C_U \to \C_V$. 
		\item The resulting colimit of the Grothendieck fibrations $F_U$,  
		\[\underset{U \in \Phi^{op}}{\colim} F_U = F_\Phi\colon \F_{\Phi} \to \C_\Phi,\] 
		is a Grothendieck fibration. 
		\item If $\F$ has a terminal object $\hat{1}$, then the $\F_\Phi$ is itself the filter quotient of $\F$.  
	\end{enumerate}
\end{lemma}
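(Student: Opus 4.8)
The plan is to construct the diagram of $(1)$ by hand, deduce $(2)$ formally from \cref{lemma:filtered colimit fibration} and \cref{lemma:filtered colimits filter quotients}, and then recognise the colimit in $(3)$ as a filter quotient of $\F$ by exploiting the adjunctions of \cref{lemma:adj fcoswp}.

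For $(1)$: since each $U\in\Phi$ is subterminal, the inclusion $\C_U\hookrightarrow\C$ is a discrete fibration (it is isomorphic to the slice projection $\C_{/U}\to\C$), so $\F_U=\C_U\times_\C\F\to\C_U$ is a pullback of the Grothendieck fibration $F$, hence again a Grothendieck fibration, and the projection $\F_U\to\F$ preserves Cartesian morphisms. For $V\le U$ in $\Phi$ I would take the transition functor $\F_U\to\F_V$ to be the composite of $\F_U\hookrightarrow\F$ with the Cartesian right adjoint of \cref{lemma:adj fcoswp}(1); by construction it covers the functor $P_V\colon\C_U\to\C_V$ of \cref{lemma:filtered colimits filter quotients}, and it is Cartesian as a composite of Cartesian functors. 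Functoriality in $V\le U$ (after fixing a cleavage) follows from uniqueness of Cartesian lifts, i.e. from the fact that a horizontal pasting of Cartesian squares is Cartesian. This produces a diagram of Grothendieck fibrations and Cartesian functors over $\Phi^{op}$, which is filtered because $\Phi$ is downward directed. For $(2)$: its colimit, formed in the arrow category of $\cat$ and hence computed levelwise, is a Grothendieck fibration by \cref{lemma:filtered colimit fibration}, with base $\colim_{U\in\Phi^{op}}\C_U=\C_\Phi$ by \cref{lemma:filtered colimits filter quotients}; one then simply \emph{defines} $\F_\Phi:=\colim_{U\in\Phi^{op}}\F_U$ and $F_\Phi:=\colim_{U\in\Phi^{op}}F_U$.

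For $(3)$: assume $\F$ has a terminal object $\hat 1$ and, for $U\in\Phi$, let $\hat U$ be the domain of the Cartesian lift of $U\to 1$ along $\hat 1$. By \cref{lemma:adj fcoswp}(2) each $\hat U$ is subterminal in $\F$, the right adjoint of $\F_U\hookrightarrow\F$ is $P_{\hat U}=(-)\times\hat U$, and consequently $\F_U=\F_{\hat U}$. I would then check that $U\mapsto\hat U$ is an order embedding $\Phi\hookrightarrow\Sub(\F)$: if $V\le U$, the unique lift of $V\to U$ along $\hat U\to\hat 1$ yields $\hat V\le\hat U$, and applying $F$ gives the converse. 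Hence the upward closure $\hat\Phi$ of $\{\hat U:U\in\Phi\}$ is a filter of subterminal objects on $\F$ (intersection closedness is inherited from $\Phi$) in which $\{\hat U:U\in\Phi\}$ is coinitial, and under the identification $\F_U=\F_{\hat U}$ the transition functors from $(1)$ become precisely the functors $P_{\hat V}\colon\F_{\hat U}\to\F_{\hat V}$ occurring in \cref{lemma:filtered colimits filter quotients} for $\F$ and $\hat V\le\hat U$. Applying \cref{lemma:filtered colimits filter quotients} to $\F$ and using coinitiality of $\{\hat U\}$ in $\hat\Phi$ then gives
\[\F_{\hat\Phi}=\colim_{W\in\hat\Phi^{op}}\F_W=\colim_{U\in\Phi^{op}}\F_{\hat U}=\colim_{U\in\Phi^{op}}\F_U=\F_\Phi,\]
compatibly over $\C_\Phi$, exhibiting $\F_\Phi$ as the filter quotient of $\F$.

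I expect the main obstacle to be the reconciliation step in $(3)$: one has two a priori different transition systems on the categories $\F_U=\F_{\hat U}$ — the Cartesian-reindexing functors from $(1)$ and the product functors $P_{\hat V}$ coming from the filter $\hat\Phi$ on $\F$ — and one must show that they agree and that $P_{\hat V}$ covers $P_V$ on the base. Both reductions come from \cref{lemma:adj fcoswp}(2) together with uniqueness of Cartesian lifts, but the bookkeeping has to be done carefully; it also tacitly assumes that $\F$, like $\C$, has finite products, so that ``the filter quotient of $\F$'' makes sense in the sense of \cref{def:filter quotient category} — which holds in every situation the paper applies this to, or else one can take the filtered-colimit characterisation of \cref{lemma:filtered colimits filter quotients} as the definition.
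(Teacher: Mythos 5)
Your proposal is correct and follows essentially the same route as the paper: part (1) via the Cartesian right adjoints of \cref{lemma:adj fcoswp}, part (2) via \cref{lemma:filtered colimit fibration} and the colimit description of \cref{lemma:filtered colimits filter quotients}, and part (3) by lifting the filter to $\F$ through the subterminal objects $\hat U$ and invoking \cref{lemma:filtered colimits filter quotients} again. Your treatment of (3) is in fact slightly more careful than the paper's (which simply says ``$\Phi$ lifts to a filter on $\F$''), since you address upward closure via the coinitial family $\{\hat U\}$ and the reconciliation of the two transition systems, both of which indeed reduce to \cref{lemma:adj fcoswp}(2) and uniqueness of Cartesian lifts.
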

	
\begin{proof}
	$(1)$ For a given $V \leq U$ in $\Phi$, by \cref{lemma:adj fcoswp}, the functor $P_V\colon \C_U \to \C_V$ lifts to a commutative diagram of Cartesian functors $\F_U \to \F_V$, giving us the desired natural diagram $F_U\colon \F_U \to \C_U$. 

	$(2)$ By the previous step, $F_{(-)}$ is a natural transformation between the diagrams $\F_{(-)}, \C_{(-)}\colon \Phi^{op} \to \cat$. This induces a functor on colimits $\F_\Phi \to \C_\Phi$. By \cref{lemma:filtered colimit fibration}, this functor is a Grothendieck fibration.

	$(3)$ If $\F$ has a terminal object $\hat{1}$, then by \cref{lemma:adj fcoswp}, the filter $\Phi$ lifts to a filter on $\F$, also denoted $\Phi$, and, by \cref{lemma:filtered colimits filter quotients}, the induced filter quotient coincides with the filtered colimit $\F_\Phi$. 
\end{proof}

The following useful lemma follows directly from the bijection invariance of colimits and \cref{lemma:pullback ddt}.

\begin{lemma} \label{lemma:filter quotient ddt}
 Let $(\C,\cT)$ be a comprehension category and $\Phi$ a filter of subterminal objects on $\C$. Then we have $\D(\cT)_\Phi \cong \D(\cT_\Phi)$, $\D_1(\cT)_\Phi \cong \D_1(\cT_\Phi)$, $\D_{1,*}(\cT)_\Phi \cong \D_{1,*}(\cT_\Phi)$ and $(\cT_{\cong})_\Phi \cong (\cT_\Phi)_{\cong}$.
\end{lemma}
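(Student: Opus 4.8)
The plan is to write both sides of each of the four isomorphisms as filtered colimits over $\Phi^{op}$ and then compare the colimit diagrams termwise using \cref{lemma:pullback ddt}. First, by \cref{lemma:filtered colimits filter quotients} together with \cref{lemma:filter quotient fibration} (applied to the Grothendieck fibrations $\D(\cT) \to \C$, $\D_1(\cT) \to \C$, $\D_{1,*}(\cT) \to \C$, $\cT_\cong \to \C$ obtained by reindexing, as well as $\cT \to \C$ itself), I would realize $\D(\cT)_\Phi = \colim_{U \in \Phi^{op}} \D(\cT)_U$ with $\D(\cT)_U = \C_U \times_\C \D(\cT)$, and likewise $\cT_\Phi = \colim_{U \in \Phi^{op}} \cT_U$ with $\cT_U = \C_U \times_\C \cT$, the transition functor for $V \leq U$ being restriction along the inclusion $\C_V \hookrightarrow \C_U$; and the same for the remaining three constructions.

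Next, since $\C_U$ is a full subcategory of $\C$ closed under domains of morphisms, the inclusion $j_U \colon \C_U \hookrightarrow \C$ is a discrete fibration (equivalently, $\C_U \cong \C_{/U}$ as $U$ is subterminal, and slice projections are discrete fibrations), and so is each inclusion $\C_V \hookrightarrow \C_U$. Applying \cref{lemma:pullback ddt} with $F = j_U$ gives isomorphisms $\D(\cT)_U = j_U^* \D(\cT) \cong \D(j_U^* \cT) = \D(\cT_U)$, and correspondingly $\D_1(\cT)_U \cong \D_1(\cT_U)$, $\D_{1,*}(\cT)_U \cong \D_{1,*}(\cT_U)$, $(\cT_\cong)_U \cong (\cT_U)_\cong$. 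Because the transition functors are themselves restrictions along discrete fibrations, and such restrictions preserve and reflect Cartesian morphisms, display maps, and dependent projections (\cref{lemma:pullback comprehension,lemma:pullback ddt}), these isomorphisms are natural in $U$, i.e. they assemble into isomorphisms of diagrams $\Phi^{op} \to \cat$.

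Passing to the colimit and using that isomorphic diagrams have isomorphic colimits (``bijection invariance of colimits''), I obtain $\D(\cT)_\Phi \cong \colim_{U} \D(\cT_U)$, and it then remains to identify this with $\D(\cT_\Phi) = \D(\colim_U \cT_U)$; that is, to check that the construction $\D(-)$ — and its variants $\D_1(-)$, $\D_{1,*}(-)$, $(-)_\cong$ — commute with this particular filtered colimit of comprehension categories. I expect this last identification to be the only real point of the argument: an object of $\D(\cT_\Phi)$ is a display map of $\C_\Phi$ and a morphism is a pullback square, and one checks that both are already witnessed at a finite stage, using that the colimit is filtered (finitely many morphisms lie in a common $\C_U$), that the transition functors reflect the relevant structure by \cref{lemma:pullback comprehension}, and that filtered colimits of categories preserve and detect finite limits, so that a square in $\C_\Phi$ is a pullback if and only if it is one in some $\C_U$. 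Running this through for all four constructions yields the four claimed isomorphisms.
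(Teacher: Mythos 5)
Your proposal is correct and follows essentially the same route as the paper, whose own justification is precisely that the claim follows from the termwise isomorphisms of \cref{lemma:pullback ddt} applied to the filtered colimit description of the filter quotient (\cref{lemma:filtered colimits filter quotients,lemma:filter quotient fibration}) together with invariance of colimits under isomorphism of diagrams. Your additional checks — naturality in $U$ and the finite-stage witnessing of display maps and pullback squares needed to identify $\colim_U\D(\cT_U)$ with $\D(\cT_\Phi)$ — are exactly the details the paper leaves implicit.
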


\begin{lemma} \label{lemma:instantiation filter quotient}
	Let $(\cC,\cT)$ be a comprehension category, $\cP$ a parameter scheme and $\Inst(\cP) \to \C$ the associated instantiation, and let $\Phi$ be a filter of subterminal objects on $\C$. $\cP_\Phi$, defined as the filtered colimit of $\cP_U$ (\cref{def:pullback fcoswp}), is a parameter scheme over $\C_\Phi$, for which the associated instantiation is precisely given by the filtered colimit $\Inst(\cP)_\Phi \to \C_\Phi$.
\end{lemma}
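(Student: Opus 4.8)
The plan is to reduce everything to the inductive definition of the instantiation of a parameter scheme in \cite[Definition 12.1]{lumsdaineshulman2020goodexcellent} and then combine the pullback-level statement \cref{lemma:pullback instantiation} with the fact that the filter quotient is a filtered colimit of the pullback categories $\C_U$ (\cref{lemma:filtered colimits filter quotients}). Concretely, for each $U \in \Phi$ the projection $\pi_U\colon \C_{/U}\to\C$ is a discrete fibration, so by \cref{lemma:pullback instantiation} we get a parameter scheme $\cP_U := \pi_U^*\cP$ over $\C_U$ with associated instantiation $\Inst(\cP_U) \cong \pi_U^*\Inst(\cP) \to \C_U$. For $V \le U$ in $\Phi$ the transition functor $P_V\colon \C_U \to \C_V$ is again a discrete fibration, and the composite $\C_{/V}\to\C_{/U}\to\C$ agrees with $\pi_V$, so $P_V^*\cP_U \cong \cP_V$; thus the $\cP_U$ assemble into a diagram over $\Phi^{op}$, and likewise the $\Inst(\cP_U)$ form a diagram over $\Phi^{op}$ compatible with it (this is exactly the content already packaged in \cref{def:pullback fcoswp} and \cref{lemma:filter quotient fibration}(1)). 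Define $\cP_\Phi := \colim_{U\in\Phi^{op}}\cP_U$ and $\Inst(\cP)_\Phi := \colim_{U\in\Phi^{op}}\Inst(\cP_U)$.

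The main work is then to verify that $\cP_\Phi$ is a genuine parameter scheme over $\C_\Phi$ and that $\Inst(\cP)_\Phi \to \C_\Phi$ is its instantiation, which I would do by induction on the construction of $\cP$, mirroring the three-case induction in the proof of \cref{lemma:pullback instantiation}. In the base case $\cP = \varnothing$: each $\cP_U$ is empty with $\Inst(\cP_U) = \C_U$, and the filtered colimit gives $\Inst(\cP)_\Phi = \colim_U \C_U = \C_\Phi$ by \cref{lemma:filtered colimits filter quotients}, as required. In the type-parameter step, $\cP = \langle\cP',\alpha\rangle$ with $\alpha\colon\Inst(\cP')\to\D(\cT)$, so $\Inst(\cP') \to \Inst(\langle\cP'\rangle)$ is a pullback of $\cT_{\cong}\to\C$; applying $\pi_U^*$ and passing to the colimit, I use \cref{lemma:filter quotient ddt} (which identifies $(\cT_{\cong})_\Phi \cong (\cT_\Phi)_{\cong}$ and similarly for $\D(\cT)$, $\D_1(\cT)$) together with the fact that filtered colimits in $\cat$ commute with the relevant finite limits (pullbacks), to conclude that $\Inst(\cP)_\Phi$ is the pullback over $\C_\Phi$ defining the instantiation of the type parameter $\alpha_\Phi := \colim_U \alpha_U\colon \Inst(\cP')_\Phi \to \D(\cT_\Phi)$. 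The term-parameter step $\cP = \llbracket\cP',\alpha,\beta\rrbracket$ is identical but uses $\D_1(\cT)$ and $\D_{1,*}(\cT)$ in place of $\cT_{\cong}$, again invoking \cref{lemma:filter quotient ddt} and commutation of filtered colimits with the finite pullbacks involved.

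The point where care is needed is the commutation of the filtered colimit over $\Phi^{op}$ with the pullbacks that appear in each inductive step: one must check that forming $\Inst(\cP_U)$ as an iterated pullback and then taking the colimit yields the same thing as taking colimits of all the pieces first and then forming the iterated pullback over $\C_\Phi$. This is exactly the statement that filtered colimits of categories commute with finite limits of categories, applied fiber-wise, combined with the already-established compatibilities $P_V^*\D(\cT_U)\cong\D(\cT_V)$ etc.\ from \cref{lemma:filter quotient ddt} and the bijection-invariance of colimits noted just before that lemma. I would cite \cref{lemma:filtered colimits filter quotients} for the identification of the colimit with $\C_\Phi$ and the standard fact about filtered colimits commuting with finite limits, remark that all the pullbacks in \cite[Definition 12.1]{lumsdaineshulman2020goodexcellent} are finite, and conclude by the induction that $\cP_\Phi$ is a parameter scheme over $\C_\Phi$ with $\Inst(\cP_\Phi) \cong \Inst(\cP)_\Phi \to \C_\Phi$.
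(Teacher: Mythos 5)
Your proposal is correct and takes essentially the same route as the paper: the paper's proof is precisely to rerun the three-case mutual induction of \cref{lemma:pullback instantiation}, replacing \cref{lemma:pullback ddt} by \cref{lemma:filter quotient ddt}, with the commutation of filtered colimits with the finite pullbacks in the inductive steps (which you spell out explicitly) doing the work of the pullback gluing/cancellation there. One minor slip worth correcting: the transition functor $P_V\colon \C_U \to \C_V$ is not a discrete fibration in general (only the slice projections $\pi_U$ and the inclusions $\C_V \hookrightarrow \C_U$ are), and ``$P_V^*\cP_U \cong \cP_V$'' is mis-stated; but this is harmless, since the compatible diagram of instantiations over $\Phi^{op}$ is exactly what \cref{lemma:adj fcoswp} and \cref{lemma:filter quotient fibration}(1), which you cite, already provide.
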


\begin{proof}
	We can precisely repeat the three inductive steps and the arguments therein from the proof of \cref{lemma:pullback instantiation}, this time relying on the isomorphisms from \cref{lemma:filter quotient ddt}, instead of \cref{lemma:pullback ddt}.
\end{proof}

\begin{proposition} \label{prop:fcoswp filter quotient}
	Let $(\C,\cT,\cS, \cP, \Inst(\cP))$ be an FCoSwP over $\C$ such that $\cT,\cS$ have terminal objects, and $\Phi$ a filter of subterminal objects. Then $(\C_\Phi,\cT_{\Phi}, \cS_{\Phi}, \cP_{\Phi}, \Inst(\cP_\Phi))$ is an FCoSwP over $\C_\Phi$. 
\end{proposition}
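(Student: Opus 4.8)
The plan is to exhibit the asserted FCoSwP on $\C_\Phi$ as a filtered colimit, over $\Phi^{op}$, of the pullback FCoSwPs of \cref{def:pullback fcoswp}, and then verify that each of the four defining clauses is stable under such colimits. For every $U\in\Phi$ the projection $\pi_U\colon\C_{/U}\to\C$ is a discrete fibration, with $\C_{/U}\simeq\C_U$ since $U$ is subterminal, so \cref{lemma:pullback fcoswp} yields an FCoSwP $(\C_U,\cT_U,\cS_U,\cP_U,\Inst(\cP_U))$ over $\C_U$. For $V\leq U$ in $\Phi$, the transition functor $P_V\colon\C_U\to\C_V$ lifts, via the Cartesian right adjoints of \cref{lemma:adj fcoswp} (as in \cref{lemma:filter quotient fibration}(1)), to a morphism of FCoSwPs compatible with all of the structure, so that $U\mapsto(\C_U,\cT_U,\cS_U,\cP_U,\Inst(\cP_U))$ is a diagram $\Phi^{op}\to\mathrm{FCoSwP}$. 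By \cref{lemma:filtered colimits filter quotients} the colimit of the base categories is $\C_\Phi$; by \cref{lemma:filter quotient fibration}, using that $\cT$ and $\cS$ have terminal objects, the colimits of $\cT_U$ and $\cS_U$ are the filter quotients $\cT_\Phi$ and $\cS_\Phi$ and are Grothendieck fibrations over $\C_\Phi$; and by \cref{lemma:instantiation filter quotient} the colimit of $\cP_U$ is a parameter scheme $\cP_\Phi$ over $\C_\Phi$ with associated instantiation $\Inst(\cP_\Phi)=\colim_U\Inst(\cP_U)$.

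It remains to verify the FCoSwP axioms for this colimit. That $\cT_\Phi\to\C_\Phi$ and $\cS_\Phi\to\C_\Phi$ are Grothendieck fibrations is \cref{lemma:filter quotient fibration}(2). That $(\C_\Phi,\cT_\Phi)$ is a comprehension category follows by passing the Cartesian comprehension functors $\cT_U\to\C_U^{\rightarrow}$ of \cref{lemma:pullback comprehension} to the colimit: one identifies $\colim_U\C_U^{\rightarrow}$ with $(\C_\Phi)^{\rightarrow}$ over $\C_\Phi$, using $\C_U^{\rightarrow}=\pi_U^*(\C^{\rightarrow})$ (as in the proof of \cref{lemma:pullback comprehension}), the filtered-colimit description of the filter quotient, and the fact that $P_\Phi$ preserves finite limits (\cref{prop:filter quotient category}); the resulting functor $\cT_\Phi\to(\C_\Phi)^{\rightarrow}$ is Cartesian, being a filtered colimit of Cartesian functors. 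Finally, the faithful amnestic isofibrations $\cS_U\to\Inst(\cP_U)\times_{\C_U}\C_U^{\rightarrow}$ pass to the colimit: filtered colimits of categories commute with the pullbacks $\times_{(-)}$, so the target becomes $\Inst(\cP_\Phi)\times_{\C_\Phi}(\C_\Phi)^{\rightarrow}$; a filtered colimit of isofibrations is an isofibration by \cref{lemma:filtered colimit fibration}; and faithfulness and amnesticity are inherited by any filtered colimit of categories and functors.

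The main obstacle is bookkeeping rather than any deep step. One must (i) check that the transition functors $P_V$ of \cref{lemma:adj fcoswp} assemble coherently, simultaneously across $\cT$, $\cS$, and $\Inst(\cP)$, into a genuine diagram of FCoSwPs, and (ii) keep track of the fact that filtered colimits of categories commute with the finite-limit constructions appearing in the definition — the pullback $\times_{\C}$ and the arrow-category functor $(-)^{\rightarrow}$ — while preserving Cartesian-ness, faithfulness, isofibrations, and amnesticity of the structure functors; both rely on the explicit filtered-colimit model of $\C_\Phi$ from \cref{lemma:filtered colimits filter quotients} together with the compatibility lemmas \cref{lemma:pullback ddt,lemma:pullback instantiation,lemma:filter quotient ddt}. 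The terminal-object hypotheses on $\cT$ and $\cS$ are exactly what promotes the colimits $\colim_U\cT_U$, $\colim_U\cS_U$ to honest filter quotients via \cref{lemma:filter quotient fibration}(3), which is what makes the notation $\cT_\Phi$, $\cS_\Phi$ legitimate in the statement.
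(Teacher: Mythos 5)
Your proposal is correct and follows essentially the same route as the paper: describe $\C_\Phi$ and the structure fibrations as filtered colimits of the pullback FCoSwPs over the $\C_U$, then check each clause (Grothendieck fibration, parameter scheme and instantiation, isofibration, faithfulness, amnesticity) is stable under filtered colimits, citing the same lemmas. You are slightly more explicit than the paper on the comprehension functor and the identification of the target $\Inst(\cP_\Phi)\times_{\C_\Phi}\C_\Phi^{\rightarrow}$, and slightly terser on amnesticity, which the paper verifies by the same stage-wise argument you implicitly rely on.
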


\begin{proof}
	First, by \cref{lemma:filter quotient fibration}, the induced functors on filter quotients $\cT_\Phi \to \C_\Phi$, $\cS_\Phi \to \C_\Phi$ are still Grothendieck fibrations. Moreover, by \cref{lemma:filtered colimit fibration}, $\cS_\Phi \to \Inst(\cP_\Phi) \times_{\C_\Phi} \C_\Phi^{\rightarrow}$ is additionally an isofibration. Next, by \cref{lemma:instantiation filter quotient}, $\cP_\Phi$ is a parameter scheme over $\C_\Phi$, with instantiation $\Inst(\cP_\Phi)$. Now, the fact that filtered colimits commute with finite limits implies that the filtered colimits preserve monomorphisms. So, filtered colimits preserve faithfulness, meaning $S_\Phi \to \Inst(\cP_\Phi) \times_{\C_\Phi} \C_\Phi^{\rightarrow}$ is faithful.

	Finally, a morphism $[f]$ in the filter quotient $S_\Phi$ is an isomorphism if there is a $U$ in $\Phi$, such that its representative $f$ in $S_U$ is an isomorphism. Now, if the image of $[f]$ is the identity, then so is the image of $f \times V$, for some $V$. As, by \cref{lemma:pullback fcoswp}, the functor $S_V \to \Inst(\cP_V) \times_{\C_V} (\C_V)^{\rightarrow}$ is amnestic, it follows that the image of $f \times V$ is the identity, which means the image of $[f]$ is the identity. This proves the functor is amnestic, hence we are done.
\end{proof}

Having constructed the filter quotient FCoSwP, we now prove it preserves relevant properties of the original FCoSwP. First, we note here the following basic categorical lemma. 
\begin{lemma} \label{lemma:adjunctions}
 Let $F\colon\C^{op} \to \set$ be a representable functor with representing object $C$. Let $L\colon\D \to \C$ be a functor with right adjoint $R$, then $F \circ L$ is also representable, with representing object $R(C)$. 
\end{lemma}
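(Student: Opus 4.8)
The plan is to unwind the definitions and reduce everything to the universal property of an adjunction. Recall that a functor $F\colon\C^{op} \to \set$ being representable with representing object $C$ means precisely that there is a natural isomorphism $F \cong \C(-,C)$. So the claim is that $\D(-,R(C)) \cong \C(L(-),C)$ as functors $\D^{op}\to\set$, naturally in the argument.

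First I would set up the comparison: for an object $D$ in $\D$, the adjunction $L \dashv R$ gives a bijection $\C(L(D),C) \cong \D(D,R(C))$, and this bijection is the adjunction isomorphism, which is natural in both variables by definition of an adjunction. Composing the given natural isomorphism $F \cong \C(-,C)$ with $L^{op}$ on the right gives $F\circ L \cong \C(L(-),C)$, and then the adjunction isomorphism gives $\C(L(-),C) \cong \D(-,R(C))$. Chaining these two natural isomorphisms yields $F \circ L \cong \D(-,R(C))$, which is exactly the statement that $F\circ L$ is representable with representing object $R(C)$. Naturality of the composite follows because it is a composite of natural transformations.

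There is essentially no obstacle here; the only thing to be careful about is that the naturality of the adjunction isomorphism is in the correct variable (the contravariant slot), which is standard. I would simply write:

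\begin{proof}
	Since $F$ is representable with representing object $C$, there is a natural isomorphism $F \cong \C(-,C)$ of functors $\C^{op}\to\set$. Precomposing with $L^{op}$ gives a natural isomorphism $F\circ L \cong \C(L(-),C)$ of functors $\D^{op}\to\set$. The adjunction $L \dashv R$ provides a natural isomorphism $\C(L(D),C) \cong \D(D,R(C))$, natural in $D$. Composing these, we obtain a natural isomorphism $F \circ L \cong \D(-,R(C))$, so $F\circ L$ is representable with representing object $R(C)$.
\end{proof}
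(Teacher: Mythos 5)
Your proof is correct: the paper states this lemma without proof (treating it as a basic categorical fact), and your argument—composing the natural isomorphism $F \cong \C(-,C)$ precomposed with $L^{op}$ with the adjunction isomorphism $\C(L(D),C)\cong\D(D,R(C))$—is exactly the standard argument one would supply. No gaps; the only notational point, that $F\circ L$ really means $F\circ L^{op}$, is one you already handle correctly.
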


Finally, we can move on to the main result regarding FCoSwP and filter quotients. Here we refer the reader to \cite[Definition 12.4]{lumsdaineshulman2020goodexcellent} for the definition of weakly stable typal initial $\cS$-algebras and \cite[Definition 12.5]{lumsdaineshulman2020goodexcellent} for the definition of representable lifts.

\begin{lemma} \label{lemma:projections fcoswp}
	Let $(\C,\cT,\cS, \cP, \Inst(\cP))$ be an FCoSwP over $\C$ and $U$ be a subterminal object. 
	\begin{enumerate}[leftmargin=*]
		\item $P_U\colon \C \to \C_U$ maps weakly stable typal initial $\cS$-algebras to weakly stable typal initial $\cS_U$-algebras.
		\item $P_U\colon \C \to \C_U$ preserves representable lifts.  
	\end{enumerate}
\end{lemma}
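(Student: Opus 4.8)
The key observation is that $P_U\colon \C \to \C_U$ is exactly the pullback-along-$\pi_U$ construction from \cref{lemma:pullback fcoswp}, followed by a left adjoint. More precisely, $\C_U = \C_{/U}$ and the functor $- \times U\colon \C \to \C_{/U}$ is left adjoint to the forgetful functor $\pi_U\colon \C_{/U} \to \C$ (since $U$ is subterminal, $X \times U \to U$ is the universal object over $U$ equipped with a map to $X$). Under the identification of the FCoSwP $(\C_U, \cT_U, \cS_U, \ldots)$ with the pullback FCoSwP along the discrete fibration $\pi_U$ (\cref{def:pullback fcoswp}), both notions ``weakly stable typal initial $\cS$-algebra'' and ``representable lift'' are phrased purely in terms of the comprehension category, the instantiation fibration, and representability of certain functors $\C^{op} \to \set$ cut out by the data of $\cS$. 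So the plan is: (i) identify $P_U$ with the relevant adjunction; (ii) check that the defining diagrams/functors of the two notions are transported correctly along this adjunction; (iii) invoke \cref{lemma:adjunctions} to conclude that representability is preserved.

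\begin{proof}
	Recall that $\C_U = \C_{/U}$ and, since $U$ is subterminal, the functor $P_U = - \times U\colon \C \to \C_{/U}$ is left adjoint to the projection $\pi_U\colon \C_{/U} \to \C$; indeed for $X$ in $\C$ and $p\colon V \to U$ in $\C_{/U}$ a map $V \to X$ over $U$ is the same as a map $V \to X$ in $\C$, because the composite $V \to U$ is forced by $U$ being subterminal. By \cref{def:pullback fcoswp}, the FCoSwP $(\C_U,\cT_U,\cS_U,\cP_U,\Inst(\cP_U))$ is obtained by pulling back $(\C,\cT,\cS,\cP,\Inst(\cP))$ along the discrete fibration $\pi_U$, and under the adjunction above $P_U$ sends an object $X$ to the object $X \times U = \pi_U^*X$ of $\C_{/U}$, compatibly with all the structure by \cref{lemma:pullback fcoswp} and \cref{lemma:pullback instantiation}.

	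$(1)$ The notion of weakly stable typal initial $\cS$-algebra (\cite[Definition 12.4]{lumsdaineshulman2020goodexcellent}) asserts, for each instantiation datum $(\theta, X \to \Gamma)$ with an $\cS(\theta)$-structure, the existence of a certain initial algebra together with weakly stable typal lifts against display maps, all expressed via (co)Cartesian lifts in $\cT$ and $\cS$ and pullback squares in $\C$. Applying $P_U = \pi_U^*$ to such a structure in $\C$: pullback along the discrete fibration $\pi_U$ preserves Cartesian morphisms in $\cT$ and $\cS$ (\cref{lemma:pullback comprehension}), preserves display maps and dependent projections (\cref{lemma:pullback ddt}), and preserves finite limits (it is a pullback functor, hence in particular a right adjoint on the relevant slices, and a discrete-fibration pullback preserves the pullback squares witnessing stability). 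Moreover every instantiation datum over $\C_U$ is, by \cref{lemma:pullback instantiation} and the fact that $\pi_U$ is a discrete fibration, of the form $\pi_U^*$ of one over $\C$ living in the fiber over $U$. Hence the image under $P_U$ of a weakly stable typal initial $\cS$-algebra satisfies all the clauses defining a weakly stable typal initial $\cS_U$-algebra.

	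$(2)$ A representable lift (\cite[Definition 12.5]{lumsdaineshulman2020goodexcellent}) is, by definition, the data witnessing that a certain functor $\C^{op} \to \set$ --- encoding the lifts of the initial algebra structure against the generic display map --- is representable, with a specified representing object. Composing with the left adjoint $P_U$ and using \cref{lemma:adjunctions}, the corresponding functor $\C_U^{op} \to \set$ is again representable, with representing object the image under the right adjoint $\pi_U$ of the original representing object; and this image is precisely the representing object prescribed by the pulled-back FCoSwP, again by \cref{lemma:pullback fcoswp}. Therefore $P_U$ preserves representable lifts.
\end{proof}

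Here the main obstacle is bookkeeping rather than mathematical depth: one must be careful that ``every instantiation datum over $\C_U$ comes from one over $\C$'' genuinely holds, which is where the hypothesis that $\cT, \cS$ have terminal objects (equivalently, \cref{rem:terminal}) is used so that one may restrict attention to the fiber over $U$; and one must verify that the specific representing objects named in the pullback FCoSwP match those produced by \cref{lemma:adjunctions}, which is immediate from the explicit description in \cref{lemma:pullback fcoswp}.
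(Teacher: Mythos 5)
Your overall strategy is the paper's own (identify the FCoSwP over $\C_U$ with the pullback along the discrete fibration, and deduce representability of the restricted lift functor from \cref{lemma:adjunctions}), and part (1) is essentially fine: the crucial point there is just the weak-stability clause (reindexing along $P_U\Gamma\to\Gamma$, and further along any map of $\C_U$, preserves typal initiality) combined with the fact that the fibers of $\cT_U,\cS_U$ over an object of $\C_U$ are literally the fibers of $\cT,\cS$; your structure-preservation discussion is a wordier route to the same place.

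In part (2), however, you have the adjunction backwards, and this breaks the step as written. The functor $P_U=-\times U\colon\C\to\C_U$ is the \emph{right} adjoint of the inclusion $\pi_U\colon\C_U\to\C$, not its left adjoint: your own hom-set computation, $\C_U(V,X\times U)\cong\C(V,X)$ for $V$ admitting a map to $U$, is exactly the statement $\pi_U\dashv P_U$. Consequently, the lift functor for the pulled-back data is obtained by precomposing the original lift functor with the (fully faithful) inclusion $\C_{/P_U\Gamma}\to\C_{/\Gamma}$ (the left adjoint), and \cref{lemma:adjunctions} then yields a representing object equal to the image of $\overline{\omega}$ under the \emph{right} adjoint, namely $P_U\overline{\omega}=\overline{\omega}\times U$. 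Your stated conclusion --- that the restricted functor is represented by ``the image under the right adjoint $\pi_U$ of the original representing object'' --- is not even well-typed ($\pi_U$ goes from $\C_U$ to $\C$, and $\overline{\omega}$ need not lie in $\C_U$), and ``composing with $P_U$'' does not produce a functor on $\C_U^{op}$ at all. The repair is a straightforward relabeling, after which your argument coincides with the paper's; but as written the application of \cref{lemma:adjunctions} fails. A minor further point: the terminal-object hypothesis on $\cT,\cS$ that you invoke at the end is not part of this lemma and is not needed here --- the identification of fibers along the discrete fibration already gives that data over $\C_U$ is data over $\C$.
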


\begin{proof}
	$(1)$ By definition weakly stable typal initial $\cS$-algebras are stable under pullback, in this case $P_U\Gamma \to \Gamma$. 

	$(2)$ We need to prove the $\cS$-lift functor remains representable, with representing objects $P_U\overline{\omega}$,  after precomposition  with the fully faithful functor $\C_{/P_U\Gamma} \to \C_{/\Gamma}$. This follows directly from \cref{lemma:adjunctions}, as $P_U$ is the right adjoint to the inclusion.  
\end{proof}

\begin{theorem} \label{thm:filter fcoswp}
	Let $\C$ be a category with finite products, dependent exponentials of display maps, and product projections along display maps. Let $(\C,\cT,\cS, \cP, \Inst(\cP))$ be an FCoSwP over $\C$ such that $\cT,\cS$ have terminal objects, and $\Phi$ a filter of subterminal objects.
	\begin{enumerate}[leftmargin=*]
		\item  $(\C_\Phi,\cT_{\Phi}, \cS_{\Phi},\cP, \Inst(\cP)_\Phi)$ has weakly stable typal initial $\cS_\Phi$-algebras if there exists a $U$ in $\Phi$, such that the FCoSwP $(\C_U,\cT_U, \cS_U, \cP_U, \Inst(\cP_U))$ has weakly stable typal initial $\cS_U$-algebras
		\item  $\cS$ has representable lifts if there exists a $U$ in $\Phi$, such that $\cS_U$ has representable lifts. 
		\item If there exists a $U$ in $\Phi$, such that $\C_U$ has weakly stable typal initial $\cS_U$-algebras and $\cS_U$ has representable lifts, then $(\C_\Phi)_!$ has strictly stable typal initial $(\cS_\Phi)_!$-algebras
	\end{enumerate}
\end{theorem}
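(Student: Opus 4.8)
The plan is to use the description of the filter quotient as a filtered colimit, \cref{lemma:filtered colimits filter quotients}, and to transfer the initial-algebra structure from a single stage of this colimit to the whole of it. Fix $U \in \Phi$ as in the hypothesis. Since $\Phi$ is codirected, the subposet $\{V \in \Phi \mid V \leq U\}$ is cofinal in $\Phi^{op}$, so $\C_\Phi = \colim_{V \leq U} \C_V$ and, because $\cT$ and $\cS$ have terminal objects, \cref{lemma:filter quotient fibration}(3) together with \cref{prop:fcoswp filter quotient} identifies the FCoSwP $(\C_\Phi,\cT_\Phi,\cS_\Phi,\cP_\Phi,\Inst(\cP_\Phi))$ with the filtered colimit of the FCoSwPs $(\C_V,\cT_V,\cS_V,\cP_V,\Inst(\cP_V))$ over $\{V \leq U\}$. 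By \cref{lemma:adj fcoswp}, for $V \leq U$ the transition functor $P^U_V\colon \C_U \to \C_V$ is the Cartesian right adjoint to the inclusion $\C_V \hookrightarrow \C_U$; carrying out the argument of \cref{lemma:projections fcoswp} with base $\C_U$ in place of $\C$ (legitimate, since $\C_U$ has finite products and the FCoSwP $(\C_U,\dots)$ is the pullback FCoSwP of \cref{def:pullback fcoswp}) shows that each $P^U_V$ sends the given weakly stable typal initial $\cS_U$-algebra to a weakly stable typal initial $\cS_V$-algebra and preserves representable lifts; uniqueness of Cartesian lifts makes these images strictly compatible along the transition functors.

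For (1), I would assemble these compatible images into the required structure on $\C_\Phi$. The point is that ``weakly stable typal initial $\cS$-algebra'' is finitely presented data: over each context it is the choice of an algebra object satisfying finitely many lifting/section conditions, required to be compatible with pullback. Every context of $\C_\Phi$, and every context extension in $\C_\Phi$, is represented at some stage $W \in \Phi$, and replacing $W$ by any $V \in \Phi$ with $V \leq W$ and $V \leq U$ we land in a $\C_V$ where the pushed-forward algebra lives; codirectedness of $\Phi$ shows this is independent of the representative. Typal initiality then holds in $\C_\Phi$ because the finitely many defining lifts already exist at stage $U$ (hence at every $V \leq U$), and weak stability against an arbitrary pullback in $\C_\Phi$ follows by passing to a common refinement $V \leq U$ of the two stages involved, where it is the weak stability already present in $\C_V$. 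This produces weakly stable typal initial $\cS_\Phi$-algebras. The structural hypotheses on $\C$ — finite products, dependent exponentials of display maps, product projections along display maps — are used only to guarantee that the pushforward and lift functors exist at each stage; by \cref{prop:filter quotient category} they are inherited by every $\C_V$ and by $\C_\Phi$.

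For (2), the same mechanism applies one level up: by \cref{lemma:projections fcoswp}(2) and \cref{lemma:adjunctions} the $\cS_V$-lift functor is represented by $\overline{\omega}_V \coloneqq P^U_V(\overline{\omega}_U)$, and these representing objects form a compatible system whose common image in $\C_\Phi$ is $[\overline{\omega}_U]$. Identifying, via \cref{lemma:filter quotient ddt} and \cref{lemma:instantiation filter quotient}, the slice categories, display-map categories and instantiations entering the definition of the $\cS_\Phi$-lift functor with the corresponding filtered colimits, one sees that the $\cS_\Phi$-lift functor is the filtered colimit of the $\cS_V$-lift functors, and hence is represented by $[\overline{\omega}_U]$; thus the filter quotient FCoSwP has representable lifts. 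Part (3) is then immediate: by (1) and (2) the FCoSwP over $\C_\Phi$ has weakly stable typal initial $\cS_\Phi$-algebras together with representable lifts, so the strictification of \cite{lumsdaineshulman2020goodexcellent} — the passage to $(-)_!$ in the manner of local universes, which converts weakly stable structure with representable lifts into strictly stable structure, cf. \cite[Theorem 12.8]{lumsdaineshulman2020goodexcellent} — upgrades it to strictly stable typal initial $(\cS_\Phi)_!$-algebras on $(\C_\Phi)_!$.

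I expect the main obstacle to be item (1), and within it the verification that ``weakly stable typal initial'' genuinely unwinds to finitely presented data stable under the filtered colimit — in particular that weak stability against \emph{all} context extensions of $\C_\Phi$, not merely those pulled back from stage $U$, is recovered; this is precisely where codirectedness of $\Phi$ is essential. Once this bookkeeping is in place, (2) is a soft representability argument and (3) is a black-box application of the Lumsdaine--Shulman strictification.
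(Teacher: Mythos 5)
Your proposal follows essentially the same route as the paper: push the given structure stage-wise via \cref{lemma:projections fcoswp}, use codirectedness of $\Phi$ to descend any lifting or representability datum in $\C_\Phi$ to a common stage $U_0 \leq U$, solve it there, and transport the solution back along $P_\Phi$, with part (3) delegated to \cite[Theorem 12.8]{lumsdaineshulman2020goodexcellent}. The only imprecision is the clause that the ``finitely many defining lifts already exist at stage $U$'': typal initiality in $\C_\Phi$ quantifies over dependent algebras and structures that may appear only at later stages, so each such instance must itself be descended to a common refinement and solved there using the stage-wise initiality -- exactly the bookkeeping the paper carries out, and which your own closing caveat correctly anticipates.
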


\begin{proof}
	(1) Without loss of generality we can assume $U$ is the terminal object. For $\Gamma$ in $\C$ and $\theta$ in $\cP$, let $H_\Gamma$ along with an $\cS(\theta)$-structure on $\Gamma.H \to \Gamma$ be a choice of weakly stable typal initial $\cS(\theta)$-algebra, in the sense of \cite[Definition 12.4]{lumsdaineshulman2020goodexcellent}. We want to prove that $H_\Gamma$ along with the induced $\cS_\Phi(\theta)$-structure, obtained by applying $P_\Phi\colon \cS \to \cS_\Phi$, is still a weakly stable typal initial $\cS_\Phi(\theta)$-algebra. Evidently $P_\Phi$ preserves reindexing of morphisms, hence the collection of objects is still weakly stable, meaning we only need to prove $H_\Gamma$ is in fact a typal initial $\cS_\Phi(\theta)$-algebra.

 Let $C$ be an object in $\cT_\Phi(\Gamma.H)$, meaning there exists a $U_1$ in $\Phi$, such that $C$ is in $\cT_{U_1}(P_{U_1}\Gamma.P_{U_1}H)$. Moreover, pick an $\cS_\Phi(\theta)$-structure on the composite $\Gamma.H.C \to \Gamma.H \to \Gamma$, which means there exists a $U_2$ in $\Phi$, such that $P_{U_2}\Gamma.P_{U_2}H.P_{U_2}C \to P_{U_2}\Gamma$, carries a $\cS_{U_2}(P_{U_2}\theta)$-structure. Finally, assume $\Gamma.H.C \to \Gamma.H$ is an $\cS_\Phi(\theta)$-morphism, meaning there exists a $U_3$ in $\Phi$, such that $P_{U_3}\Gamma.P_{U_3}H.P_{U_3}C \to P_{U_3}\Gamma.P_{U_3}H$ is a $\cS_{U_3}(P_{U_3}\theta)$-morphism.
	
	Let $U_0 \leq U_1, U_2,U_3$ in $\Phi$. Then, by \cref{lemma:pullback fcoswp}, $P_{U_0}C$ is in $\cT_{U_0}(P_{U_0}\Gamma.P_{U_0}H)$, and $P_{U_0}\Gamma.P_{U_0}H.P_{U_0}C \to P_{U_0}\Gamma$, carries a $\cS_{U_0}(P_{U_0}\theta)$-structure, such that $P_{U_0}\Gamma.P_{U_0}H.P_{U_0}C \to P_{U_0}\Gamma.P_{U_0}H$ is a $\cS_{U_0}(P_{U_0}\theta)$-morphism. As, by \cref{lemma:projections fcoswp}, $P_{U_0}H_{P_{U_0}\Gamma}$ is still a typal initial $\cS_{U_0}$-algebra, the desired section exists. Applying $P_\Phi\colon \cS_{U_0} \to \cS_\Phi$ implies that the section also exists in the filter quotient, finishing the proof.
	
	(2) Again, without loss of generality we can assume $U$ is the terminal object. Let $\Gamma$ be an object in $\C_\Phi$. Next, let $A$ be in $\cT_\Phi(\Gamma)$, meaning there exists a $U_1$ in $\Phi$, such that $A$ is in $\cT_{U_1}(P_{U_1}\Gamma)$. Let $B$ be in $\cT_\Phi(\Gamma.A)$, meaning there exists a $U_2$ in $\Phi$, such that $B$ is in $\cT_{U_2}(P_{U_2}\Gamma.P_{U_2}A)$. Moreover, let $\theta$ be in $\Inst(\cP_\Phi)$, meaning there exists a $U_3$ in $\Phi$, such that $\theta$ is in $\Inst(\cP_{U_3})$. Next, let $\A$ be a $\cS_\Phi(\theta)$-structure on $\Gamma.A \to \Gamma$, meaning there exists $U_4$ in $\Phi$, such that $\A$ is a $\cS_{U_4}(P_{U_4}\theta)$-structure on $P_{U_4}\Gamma.P_{U_4}A \to P_{U_4}\Gamma$. 
	
	Let $U_0 \leq U_1, U_2, U_3, U_4$ in $\Phi$. Then, by \cref{lemma:pullback fcoswp}, $P_{U_0}\Gamma$ is an object in $\C_{U_0}$, $P_{U_0}A$ is in $\cT_{U_0}(P_{U_0}\Gamma)$, $P_{U_0}B$ is in $\cT_{U_0}(P_{U_0}\Gamma.P_{U_0}A)$, $P_{U_0}\theta$ is in $\Inst(\cP_{U_0})$, and $P_{U_0}\A$ is a $\cS_{U_0}(P_{U_0}\theta)$-structure. Hence, by \cref{lemma:projections fcoswp}, there exist a representable lifts $\overline{\omega} \colon V_{\A,B} \to \Gamma_0$ in $\C_{U_0}$, the equivalence class of which gives us the desired representable lift in $\C_\Phi$ after applying $P_\Phi\colon \C_{U_0} \to \C_\Phi$.

	(3) This is a direct application of the two previous parts and \cite[Theorem 12.8]{lumsdaineshulman2020goodexcellent}.
\end{proof}

Because of \cref{thm:filter fcoswp}, we now have a much better understanding of various properties of $FCoSwP$ over a filter quotient category. We now proceed to study fibred monads and how they interact with filter quotients. This requires reviewing essential aspects of the theory of fibred monads.

Recall that a \emph{fibred monad on $\C$} is a monad $\bT$ on $\C^\rightarrow$ in the category of Grothendieck fibrations over $\C$. See \cite[Definition 11.6]{lumsdaineshulman2020goodexcellent}, for a more explicit description. More generally, a \emph{fibred monad with parameters} is given by a Cartesian functor $\bT\colon \Inst(\cP) \to \Mnd(\C_\Phi)$, where $\Mnd(\C_\Phi) \to \C$ is the Grothendieck fibration with objects over $c$ in $\C$ given by fibred monads on $\C_{/c}$ \cite[Definition 11.6]{lumsdaineshulman2020goodexcellent}. Following \cite[Lemma 12.10]{lumsdaineshulman2020goodexcellent}, every fibred monad with parameters on $\C_\Phi$ induces a FCoSwP on $\C_\Phi$, given by $(\C_\Phi,\Fib,\bT{-}\Algf, \cP_\Phi, \Inst(\cP)_\Phi)$. In particular, for a suitable model category $\cM$ and fibred monad with parameters $\bT$ we have an FCoSwP over the full subcategory of fibrant objects $(\cM_\Phi)_{\textbf{\text{f}}}$, given by $((\cM_\Phi)_{\textbf{\text{f}}},\Fib,\bT{-}\Algf, \cP_\Phi,\Inst(\cP)_\Phi)$. 

We now want a suitable condition on the fibred monad $\bT$, such that the associated FCoSwP has weakly stable typal initial $\bT$-algebras with representable lifts. As we explained in the beginning of this section, we cannot define cell monads in the same way as in \cite{lumsdaineshulman2020goodexcellent}, given the lack of local presentability. However, \cref{thm:filter fcoswp} suggests obtaining fibred monads on $\C_\Phi$ out of fibred monads on $\C$. Making this precise requires several lemmas. The following lemma is a direct implication of the definition of a fibred monad.

\begin{lemma} \label{lemma:pullback mnd}
 Let $\C$ be a category and $U$ a subobject in $\C$. Then the following is a pullback diagram
	\[ 
	\begin{tikzcd}
		 \Mnd(\C_U) \arrow[r] \arrow[d] & \Mnd(\C) \arrow[d] \\
			\C_U \arrow[r] & \C
	\end{tikzcd}
	\]
\end{lemma}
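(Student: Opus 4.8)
The plan is to unwind the definitions of both $\Mnd(-)$ and the slice category $\C_U$, and check that the evident comparison functor $\Mnd(\C_U) \to \C_U \times_\C \Mnd(\C)$ is an isomorphism of categories. First I would recall that, by definition, the fibration $\Mnd(\C) \to \C$ has fiber over an object $c$ the category of fibred monads on $\C_{/c}$, i.e.\ monads on $(\C_{/c})^{\rightarrow} = (\C_{/c})_{/(-)}$ internal to Grothendieck fibrations over $\C_{/c}$. Since $U$ is subterminal, for any object $c$ of $\C_U$ (that is, any $c$ equipped with the unique map $c \to U$) the slice $(\C_U)_{/c}$ is canonically isomorphic to $\C_{/c}$: a map into $c$ in $\C_U$ is just a map into $c$ in $\C$, because the triangle over $U$ commutes automatically by subterminality. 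Hence the fiber of $\Mnd(\C_U) \to \C_U$ over $c$ is \emph{the same category} as the fiber of $\Mnd(\C) \to \C$ over $c$, namely fibred monads on $\C_{/c}$.

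Next I would assemble this fiberwise identification into the claimed pullback square. The objects of the pullback $\C_U \times_\C \Mnd(\C)$ are pairs consisting of an object $c \in \C_U$ (equivalently, $c$ together with its map to $U$) and a fibred monad on $\C_{/c}$; by the previous paragraph this is precisely the data of an object of $\Mnd(\C_U)$. The same check applies to morphisms: a morphism in $\Mnd(\C_U)$ lying over $f\colon c \to c'$ in $\C_U$ is a suitable morphism of fibred monads compatible with the reindexing functor $f^*\colon \C_{/c'} \to \C_{/c}$, and since $f$ is equally a morphism of $\C$ and the slices agree, this matches a morphism in $\C_U \times_\C \Mnd(\C)$ over $f$. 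The comparison functor is therefore a bijection on objects and on morphisms, i.e.\ an isomorphism of categories, which is exactly the assertion that the square is a pullback.

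The only point requiring a little care — and the step I expect to be the main (minor) obstacle — is verifying that the reindexing (base change) functors used to define $\Mnd(\C)$ as a fibration over $\C$ restrict correctly along $\C_U \hookrightarrow \C$: one must confirm that pulling back a fibred monad along $f\colon c \to c'$ in $\C_U$ gives the same result whether one works in $\C_U$ or in $\C$, which again comes down to the canonical identification $(\C_U)_{/c} \cong \C_{/c}$ being compatible with all the $f^*$'s. This is a routine coherence check, entirely analogous to the observation already used in \cref{lemma:pullback comprehension} that slicing is pullback-stable, so I would state it briefly and cite the subterminality of $U$ as the reason the overlying triangles impose no extra condition.
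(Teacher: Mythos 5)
Your proof is correct and is essentially the paper's own argument: the paper states this lemma without proof as ``a direct implication of the definition of a fibred monad,'' and your fiberwise unwinding --- using that subterminality of $U$ gives $(\C_U)_{/c}\cong\C_{/c}$ for every $c$ in $\C_U$, so the fibers and reindexing functors of $\Mnd(\C_U)\to\C_U$ agree with those of $\Mnd(\C)\to\C$ restricted along $\C_U\to\C$ --- is exactly the intended justification. The coherence check you flag at the end is the right (and only) point needing care, and it does go through as you describe.
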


\begin{lemma} \label{lemma:mnd times u}
	 Let $\C$ be a category and $U$ a subobject in $\C$. Let $\bT_U$ be the locally constant fibred monad $\bT_C(X) = C \times U$. Then we have the following diagram of adjunctions
		\[
	\begin{tikzcd}[column sep= 2cm ]
		\Mnd(\C_{U}) \arrow[d, "P_U"]  \arrow[r, bend left=7, "\pi"] \arrow[r, leftarrow, bend right=7, "\bot", "P_{\bT_U}"']& \Mnd(\C) \arrow[d, "P"] \\ 
		\C_{U} \arrow[r, bend left=7, "\pi"] \arrow[r, leftarrow, bend right=7, "\bot", "P_U"'] & \C
	\end{tikzcd}.
	\]
\end{lemma}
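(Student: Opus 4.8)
The plan is to recognize \cref{lemma:mnd times u} as a special case of \cref{lemma:adj fcoswp} applied to the Grothendieck fibration $\Mnd(\C)\to\C$. First I would invoke \cref{lemma:pullback mnd}, which identifies $\Mnd(\C_U)$ with the pullback $\C_U\times_\C\Mnd(\C)$; writing $\F:=\Mnd(\C)$ with its projection $F\colon\F\to\C$, which is a Grothendieck fibration (as recalled just before \cref{lemma:pullback mnd}), this says precisely that $\F_U=\Mnd(\C_U)$ in the notation of \cref{lemma:adj fcoswp}, with the fully faithful $\F_U\hookrightarrow\F$ being the top horizontal functor $\pi\colon\Mnd(\C_U)\to\Mnd(\C)$.

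Second, I would apply \cref{lemma:adj fcoswp}(1) to $F\colon\F\to\C$ and $U$. This directly produces the Cartesian right adjoint to $\Mnd(\C_U)\hookrightarrow\Mnd(\C)$ together with the commuting square of adjunctions over the base adjunction $\pi\dashv P_U$ (where $\pi\colon\C_U\hookrightarrow\C$ and $P_U=-\times U$) --- that is, the entire diagram asserted in the lemma, except that the right adjoint is so far described only abstractly.

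Third, to identify this right adjoint with $P_{\bT_U}$, I would appeal to \cref{lemma:adj fcoswp}(2), which requires a terminal object in $\F=\Mnd(\C)$. The fiber of $\Mnd(\C)\to\C$ over $1$ is the category of fibred monads on $\C_{/1}\simeq\C$, and this has a terminal object, namely the locally constant fibred monad $\hat{1}$ which over $C\in\C$ is the constant monad $X\mapsto\id_C$ on $\C_{/C}$; since reindexing a fibred monad along $c\to 1$ is restriction to $\C_{/c}$ and hence carries ``locally constant at the terminal object'' to ``locally constant at the terminal object'', reindexing preserves $\hat{1}$, so $\hat{1}$ is terminal in all of $\Mnd(\C)$. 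By \cref{lemma:adj fcoswp}(2) the right adjoint is then $P_{\hat{U}}\colon\Mnd(\C)\to\Mnd(\C_U)$ (using the identification $\F_{\hat{U}}=\F_U=\Mnd(\C_U)$ from the proof of that lemma), where $\hat{U}$ is the reindexing $(U\to 1)^*\hat{1}$. Unwinding, $\hat{U}$ assigns to each $d\to U$ the constant monad $X\mapsto d$ on $\C_{/d}$; since $U$ is subterminal, the projection $d\times U\to d$ is an isomorphism in $\C_{/d}$, so $\hat{U}$ is naturally isomorphic to the locally constant fibred monad $\bT_U$ with $\bT_C(X)=C\times U$. Hence $P_{\hat{U}}=P_{\bT_U}$, and the diagram is exactly the one in the statement.

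The only step that is not purely formal is this last identification --- matching the terminal object $\hat{1}$ and its Cartesian lift $\hat{U}$ for the fibration $\Mnd(\C)\to\C$ with the hands-on description $\bT_C(X)=C\times U$. I expect no genuine obstacle here, only some care with the definition of a fibred monad and with the fact that $d\times U\to d$ is the terminal object of $\C_{/d}$ whenever $d$ admits a map to $U$, so that the unit and multiplication of $\bT_U$ are forced and the comparison with $\hat{U}$ is automatic.
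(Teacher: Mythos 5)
Your proof is correct and follows essentially the same route as the paper: identify $\Mnd(\C_U)$ as a pullback via \cref{lemma:pullback mnd}, observe that $\Mnd(\C)$ has a terminal object given by the fibred monad $\bT_C(X)=C$, and apply \cref{lemma:adj fcoswp}. The additional details you supply --- that this object is terminal in all of $\Mnd(\C)$, not just in the fiber over $1$, and that its Cartesian lift $\hat{U}$ is (isomorphic to) $\bT_U$, so the abstract right adjoint is indeed $P_{\bT_U}$ --- are exactly what the paper's terse proof leaves implicit.
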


\begin{proof}
	By \cref{lemma:pullback mnd}, the square with $\pi$ is a pullback square. Moreover, $\Mnd(\C)$ has a terminal object, given by the fibred monad $\bT_C(X) = C$. Hence, all desired implications follow from \cref{lemma:adj fcoswp}. 
\end{proof}

\begin{lemma} \label{lemma:mnd phi}
 Let $\C$ be a category and $\Phi$ a filter of subterminal objects. Let $F$ be a fibred monad on $\C$. Then $F$ induces a fibred monad on $\C_\Phi$, which we denote by $F_\Phi$.
\end{lemma}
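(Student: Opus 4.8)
The plan is to obtain $F_\Phi$ as a filtered colimit of fibred monads on the slices $\C_U$, exactly paralleling the strategy already used for Grothendieck fibrations in \cref{lemma:filter quotient fibration}. First I would restrict $F$ along the slices: for each $U\in\Phi$, pulling back the fibred monad $F$ on $\C$ along the discrete fibration $\pi_U\colon\C_U\to\C$ yields a fibred monad $F_U$ on $\C_U$ (this is a routine pullback-stability statement for monads in the $2$-category of Grothendieck fibrations; note also that $\Mnd(\C_U)$ is literally the pullback $\Mnd(\C)\times_\C\C_U$ by \cref{lemma:pullback mnd}, so $F_U$ is just the restriction of $F$ over $\C_U$). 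Next, for $V\leq U$ in $\Phi$, I need a comparison $F_U\rightsquigarrow F_V$ covering $P_V\colon\C_U\to\C_V$; this is supplied by \cref{lemma:mnd times u}, whose right adjoint $P_{\bT_U}$ transports the fibred monad $F_U$ on $\C_U$ to one on $\C_V$ (equivalently, one composes with the restriction along $V\leq U$), yielding a diagram $F_{(-)}\colon\Phi^{op}\to\Mnd(\text{--})$ compatible with the diagram $\C_{(-)}\colon\Phi^{op}\to\cat$ of \cref{lemma:filtered colimits filter quotients}.

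I would then take the filtered colimit over $\Phi^{op}$. By \cref{lemma:filtered colimits filter quotients} the colimit of the $\C_U$ is $\C_\Phi$, and by \cref{lemma:filter quotient fibration} the colimit of the fibrations $\C_U^{\rightarrow}\to\C_U$ is the fibration $\C_\Phi^{\rightarrow}\to\C_\Phi$ (using that $\C^{\rightarrow}$ has a terminal object, namely $\id_1$, so the colimit is again a filter quotient). The functor part of $F_\Phi$ is then $\colim_{U} F_U\colon \C_\Phi^{\rightarrow}\to\C_\Phi^{\rightarrow}$, and the monad structure maps $\eta_\Phi,\mu_\Phi$ are the colimits of the $\eta_U,\mu_U$. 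What remains is to check that this colimit is still a fibred monad: Cartesianness of $F_\Phi$ over $\C_\Phi$ follows because each $F_U$ is Cartesian and filtered colimits of Cartesian functors are Cartesian (\cref{lemma:filtered colimit fibration}); the monad axioms (associativity, unitality) are equations between natural transformations, and since filtered colimits in $\cat$ commute with finite limits and preserve the relevant diagrams, the axioms transfer from the $F_U$ to $F_\Phi$ levelwise. Concretely, every object and morphism of $\C_\Phi$ lifts to some $\C_U$, where the monad equations already hold, so they hold after applying the projection $P_\Phi\colon\C_U\to\C_\Phi$.

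The main obstacle I anticipate is purely bookkeeping rather than conceptual: making precise that ``a fibred monad on $\C$'' restricts coherently to a compatible $\Phi^{op}$-indexed diagram of fibred monads, i.e. verifying that the transition maps from \cref{lemma:mnd times u} are not just functors but morphisms of monads and that they compose strictly (or at least coherently) as $W\leq V\leq U$ vary. Since $\Mnd(\C_U)=\Mnd(\C)\times_\C\C_U$ strictly, the restriction $F\mapsto F_U$ is strictly functorial in $U$, so this coherence is automatic and the diagram $F_{(-)}$ genuinely lands in monads with monad morphisms as transitions. Given that, the colimit computation is then straightforward, and one records $F_\Phi:=\colim_{U\in\Phi^{op}}F_U$, a fibred monad on $\C_\Phi$, completing the proof.
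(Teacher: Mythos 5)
Your proposal is correct and follows essentially the same route as the paper: restrict $F$ over each $U\in\Phi$ using the compatibility $F_U(X\times U)=F(X)\times U$ coming from \cref{lemma:mnd times u}, then use the filtered-colimit description of $\C_\Phi$ from \cref{lemma:filtered colimits filter quotients} to induce $F_\Phi$ and observe that the monad structure and axioms pass to the colimit since every object and morphism lifts to some $\C_U$. The paper's proof is just a terser version of this argument, so your extra bookkeeping (strict functoriality of $U\mapsto F_U$, identification of $\C_\Phi^\rightarrow$ with the colimit of the $\C_U^\rightarrow$) is a harmless elaboration rather than a different method.
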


\begin{proof}
  By \cref{lemma:mnd times u}, for a given $U$ in $\Phi$, we have $F_U(X \times U) = F(X) \times U$. Hence, by the property of colimits, this induces a functor $F_\Phi\colon \C_\Phi \to \C_\Phi$. It is now evident that it still has the structure and property of a monad. 
\end{proof}

\cref{lemma:mnd times u} implies that a category $\C$ and filter of subterminal objects $\Phi$ on $\C$, induces a filter of subterminal objects on $\Mnd(\C)$ given by $\bT_U$ for all $U$ in $\Phi$, which we also denote by $\Phi$. This directly results in the following definition.

\begin{definition} \label{def:mnd phi}
	Let $\C$ be a category and $\Phi$ a filter of subterminal objects. A $\Phi$-fibred monad is an object in $\Mnd(\C)_\Phi$.  
\end{definition}

Unwinding definitions, \cref{def:mnd phi} implies that a $\Phi$-fibred monad is a fibred monad on $\C_\Phi$, that is equal to $F_\Phi$ (in the sense of \cref{lemma:mnd phi}) for some fibred monad $F$ in $\Mnd(\C_U)$, for some $U$ in $\Phi$.

\begin{definition} \label{def:mnd wp phi}
	Let $\C$ be a category and $\Phi$ a filter of subterminal objects, $\cP$ a parameter. A $\Phi$-fibred monad  with parameters is a Cartesian functor $\Inst(\cP) \to \Mnd(\C)_\Phi$.  
\end{definition}

We can finally define cell monads (with parameters).

\begin{definition} \label{def:cell monad}
	Let $\cM$ be a model category with a class of cell monads, and $\Phi$ a model filter. A fibred monad on $\cM_\Phi$ is a \emph{cell monad} if it is equal to $\bT_\Phi$, for some cell monad $\bT$ on $\cM_U$, for some $U$ in $\Phi$. 
\end{definition}

\begin{definition} \label{def:cell monad wp}
	Let $\cM$ be a model category with a class of cell monads, $\Phi$ a filter of subterminal objects.	A \emph{cell monad with parameters} on $\cM_\Phi$ is a Cartesian functor $\bT\colon\cP \to \Mnd(\cM)_\Phi$ such that for all $\theta$ in $\cP$, the image is a cell monad on $\cM_\Phi$.
\end{definition}

\begin{example} \label{ex:cell monad wp excellent}
	Let $\cM$ be an excellent model category, and $\Phi$ a simplicial model filter. Then, following \cref{def:cell monad wp}, a cell monad with parameters on $\cM_\Phi$ is a Cartesian functor $\bT\colon\cP \to \Mnd(\cM)_\Phi$ such that for all $\theta$ in $\cP$, there exists a $U$ in $\Phi$, such that $\bT(\theta) = (\bT')_\Phi$, for some cell monad $\bT'$ on $\cM_U$, in the sense of \cite[Definition 11.10]{lumsdaineshulman2020goodexcellent}.
\end{example}

\begin{remark} \label{rem:cell monad}
 Note that $\cP$ is finite, hence if $\bT$ is cell monad with parameters, there exists a $V$ in $\Phi$, such that $(\bT_\theta)_V$ is a cell monad for all $\theta$.
\end{remark}

We can now state and prove the main result.

\begin{theorem} \label{thm:ia}
	Let $\cM$ be a good model category that satisfies \hyperlink{item:ia}{\textbf{IA}}, $\Phi$ a model filter. 
	\begin{enumerate}[leftmargin=*]
		\item Then $\cM_\Phi$ satisfies \hyperlink{item:ia}{\textbf{IA}} with respect to all cell monads with parameters, as defined in \cref{def:cell monad wp}.
		\item $(\cM_\Phi)_!$ has strictly stable typal initial $\bT_\Phi{-}\Alg_f$-algebras, with respect to all cell monads with parameters $\bT$ on $\cM_\Phi$.
		\item $P_\Phi\colon \cM \to \cM_\Phi$ maps weakly stable typal initial $\bT{-}\Alg_{\emph{\textbf{\text{f}}}}$-algebras to weakly stable typal initial $\bT_\Phi{-}\Alg_{\emph{\textbf{\text{f}}}}$-algebras.  
	\end{enumerate}
\end{theorem}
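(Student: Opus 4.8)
The plan is to deduce all three items from the transfer statements \cref{thm:filter fcoswp} and \cref{lemma:projections fcoswp} together with the finiteness/descent observation of \cref{rem:cell monad}. For items (1) and (2), I would begin with an arbitrary cell monad with parameters $\bT$ on $\cM_\Phi$ in the sense of \cref{def:cell monad wp}. Unwinding \cref{def:cell monad,def:mnd phi,def:mnd wp phi}, and using that the parameter scheme $\cP$ is finite together with the colimit descriptions of $\Inst(\cP)_\Phi$ (\cref{lemma:instantiation filter quotient}) and of $\Mnd(\cM)_\Phi$ (\cref{lemma:mnd times u}, \cref{lemma:pullback mnd}), \cref{rem:cell monad} yields a single $V\in\Phi$ and a cell monad with parameters $\bT'$ on $\cM_V$ with $\bT=(\bT')_\Phi$ in the sense of \cref{lemma:mnd phi}. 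Then the FCoSwP $((\cM_\Phi)_{\textbf{\text{f}}},\Fib,\bT{-}\Algf,\cP_\Phi,\Inst(\cP)_\Phi)$ induced by $\bT$ is the filter quotient (along the cofinal subfilter $\{W\in\Phi : W\leq V\}$, whose filter quotient of $\cM_V$ is again $\cM_\Phi$) of the FCoSwP $((\cM_V)_{\textbf{\text{f}}},\Fib,\bT'{-}\Algf,\cP_V,\Inst(\cP)_V)$ over $\cM_V$.

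The next step is to observe that $\cM_V\cong\cM_{/V}$ is again a good model category satisfying \hyperlink{item:ia}{\textbf{IA}}: the defining properties of a good model category are stable under slicing over the fibrant object $V$, the class of cell monads restricts to $\cM_V$ (this is exactly the content of \cref{prop:free polynomial monad} at the level of the monad cells, with the FCoSwP bookkeeping handled by \cref{lemma:pullback fcoswp}), and a cell monad with parameters on $\cM_{/V}$ is, via $\Mnd(\cM_{/V})\cong\Mnd(\cM)_{/V}$ (\cref{lemma:pullback mnd}), a cell monad with parameters on $\cM$ in an extended context, to which \hyperlink{item:ia}{\textbf{IA}} for $\cM$ applies. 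Consequently $\cM_V$ has weakly stable typal initial $\bT'{-}\Algf$-algebras with representable lifts. Now \cref{thm:filter fcoswp}(1)--(2) — whose hypotheses (finite products, dependent exponentials of display maps, product projections along display maps, and terminal objects in $\Fib$ and in $\bT'{-}\Algf$) hold because $\cM_V$ is good — give that the filter-quotient FCoSwP over $\cM_\Phi$ has weakly stable typal initial $\bT_\Phi{-}\Algf$-algebras with representable lifts, which is item (1); item (2) is then \cref{thm:filter fcoswp}(3), which upgrades this to strictly stable typal initial algebras over $(\cM_\Phi)_!$.

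For item (3) one takes $\bT$ to be a cell monad with parameters on $\cM$ itself, so that $\cM$ has weakly stable typal initial $\bT{-}\Algf$-algebras by \hyperlink{item:ia}{\textbf{IA}}. Since $\Fib$ and $\bT{-}\Algf$ are compatible with the filter quotient (\cref{prop:fcoswp filter quotient}, \cref{lemma:mnd phi}), the filter quotient of the FCoSwP of $\bT{-}\Algf$ over $\cM$ is the FCoSwP of $\bT_\Phi{-}\Algf$ over $\cM_\Phi$; the preservation statement is then precisely what is verified along the way in the proof of \cref{thm:filter fcoswp}(1) specialised to $V=1$ — applying $P_\Phi$, which factors through each $P_U$ (\cref{lemma:filtered colimits filter quotients}) and carries weakly stable collections to weakly stable collections because it preserves reindexing, to a weakly stable typal initial $\bT{-}\Algf$-algebra yields, via \cref{lemma:projections fcoswp}(1), a weakly stable typal initial $\bT_\Phi{-}\Algf$-algebra.

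The step I expect to be the main obstacle is the descent at the start: recognising that an abstractly given cell monad with parameters on $\cM_\Phi$ arises — compatibly with its parameter scheme, instantiation, and Cartesian structure — from a single slice $\cM_V$, and that this $\cM_V$ still satisfies \hyperlink{item:ia}{\textbf{IA}} (equivalently, that the class of cell monads and the property \hyperlink{item:ia}{\textbf{IA}} are stable under slicing over fibrant subterminal objects). Once this reduction is in place, items (1)--(3) are formal consequences of \cref{thm:filter fcoswp} and \cref{lemma:projections fcoswp}.
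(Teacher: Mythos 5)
Your proposal is correct and follows essentially the same route as the paper: use \cref{rem:cell monad} to descend the cell monad with parameters to a single $\cM_V$, invoke \hyperlink{item:ia}{\textbf{IA}} there, and then apply \cref{thm:filter fcoswp} for items (1)--(2), with item (3) coming from the fact that the initial algebras in that argument are obtained by projecting along $P_\Phi$ (via \cref{lemma:projections fcoswp}). The only difference is that you spell out why $\cM_V$ inherits \hyperlink{item:ia}{\textbf{IA}} through the slicing/fibred-monad bookkeeping, a point the paper's proof treats as immediate ``by assumption,'' so your elaboration is a faithful filling-in rather than a new approach.
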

	
\begin{proof}
	Let $\bT$ be a cell monad with parameters. By \cref{rem:cell monad}, there is a $V$ such that for all $\theta$ in $\cP$, $(\bT_\theta)_V$ is a cell monad with parameters on $\cM_V$. By assumption $\cM_{V}$ satisfies \hyperlink{item:ia}{\textbf{IA}}, hence $(\cM_V)_{\textbf{\text{f}}}$ has weakly stable typal initial $\bT_V{-}\Algf$-algebras and $\bT_V{-}\Algf$ has representable lifts. $(1)$ and $(2)$ now follow from \cref{thm:filter fcoswp}. The last part follows from the fact that in the proof of \cref{thm:filter fcoswp} the weakly stable typal initial algebras are precisely constructed by projecting via $P_\Phi$.
\end{proof}

	% As $\cM_{V}$ satisfies \hyperlink{item:ia}{\textbf{IA}}, as a full subcategory of $\cM$. Hence, by assumption, $(\cM_V)_{\textbf{\text{f}}}$ has weakly stable typal initial $\bT_V{-}\Algf$-algebras and $\bT_V{-}\Algf$ has representable lifts. $(1)$ and $(2)$ now follow from \cref{thm:filter fcoswp}. The last part follows from the fact that in the proof of \cref{thm:filter fcoswp} the weakly stable typal initial algebras are precisely constructed by projecting via $P_\Phi$.
	
We can now apply this axiomatic result to the particular case of excellent model categories.

\begin{corollary} \label{cor:higher inductive types filter}
 Let $\cM$ be an excellent model category, $\Phi$ a simplicial model filter. Let $\bT_\Phi$ be a cell monad with parameters on $\cM_\Phi$ (in the sense of \cref{ex:cell monad wp excellent}). Then $(\cM_\Phi)_!$ has strictly stable typal initial $\bT_\Phi{-}\Alg_f$-algebras.
\end{corollary}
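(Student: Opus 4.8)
The plan is to read this off from \cref{thm:ia} once two bookkeeping points have been checked: that an excellent model category is a good model category satisfying \hyperlink{item:ia}{\textbf{IA}}, and that the notion of cell monad with parameters in \cref{ex:cell monad wp excellent} is exactly the notion that \cref{thm:ia}$(2)$ quantifies over. For the first point, I would begin by recalling that, according to the table in \cref{thm:main}, an excellent model category is by definition a good model category with the extra hypotheses \hyperlink{item:lp}{\textbf{LP}} and \hyperlink{item:cg}{\textbf{CG}}; in particular it is good. So the only thing to produce is a class of fibred monads on $\cM$ witnessing \hyperlink{item:ia}{\textbf{IA}} in the sense of \cref{def:ia}.

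For that I would take the class of cell monads on $\cM$ to be the cell monads of Lumsdaine--Shulman, i.e.\ the (possibly transfinite) composites of pushouts along monad cells, a monad cell being the free fibred monad generated by a polynomial endofunctor \cite[Lemma 11.9, Definition 11.10]{lumsdaineshulman2020goodexcellent}. With this choice, \cite[Theorem 12.13, Theorem 12.14]{lumsdaineshulman2020goodexcellent} assert precisely that for every cell monad with parameters $\bT$ on $\cM$ the FCoSwP $(\cM_{\textbf{\text{f}}},\Fib,\bT{-}\Algf,\cP,\Inst(\cP))$ attached to $\bT$ via \cite[Lemma 12.10]{lumsdaineshulman2020goodexcellent} has weakly stable typal initial $\bT$-algebras with representable lifts --- which is exactly what \cref{def:ia} demands. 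Hence $\cM$ satisfies \hyperlink{item:ia}{\textbf{IA}}. I would also note that this class restricts along each coreflection $P_U\colon\cM\to\cM_U$ for $U\in\Phi$: since $U$ is fibrant, $\cM_U\simeq\cM_{/U}$ is again an excellent model category with the induced cell monads, so it too is good and satisfies \hyperlink{item:ia}{\textbf{IA}}. This is the slice-stability that is used inside the proof of \cref{thm:ia} through \cref{rem:cell monad}, and it is also what makes the phrase ``cell monad on $\cM_U$'' in \cref{def:cell monad} and \cref{ex:cell monad wp excellent} meaningful.

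Next I would invoke \cref{thm:ia}: a simplicial model filter is in particular a model filter, so its hypotheses are met, and part $(2)$ gives that $(\cM_\Phi)_!$ has strictly stable typal initial $\bT_\Phi{-}\Alg_f$-algebras for every cell monad with parameters $\bT_\Phi$ on $\cM_\Phi$ in the sense of \cref{def:cell monad wp}. It then only remains to match the two definitions: with the class of cell monads on $\cM$ and on each $\cM_U$ fixed to be the Lumsdaine--Shulman ones, \cref{def:cell monad} unwinds to ``equal to $\bT_\Phi$ for some Lumsdaine--Shulman cell monad $\bT$ on some $\cM_U$'', and therefore \cref{def:cell monad wp} becomes word for word \cref{ex:cell monad wp excellent}. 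Thus the hypothesis of the corollary is exactly an instance of the hypothesis of \cref{thm:ia}$(2)$, and the conclusion follows verbatim.

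The step I expect to carry the most weight is the first bookkeeping point: confirming that the results of Lumsdaine--Shulman really do establish \hyperlink{item:ia}{\textbf{IA}} as it is formalized here. This requires checking that their construction of a cell monad with parameters over an excellent model category (which runs through local presentability and the small object argument), the associated FCoSwP of $\bT$-algebras, and their notions of weakly stable typal initial algebra and of representable lift \cite[Definition 12.4, Definition 12.5]{lumsdaineshulman2020goodexcellent} line up on the nose with the abstractions of \cref{def:ia}. Everything else --- the slice-stability of excellence and the translation between \cref{ex:cell monad wp excellent} and \cref{def:cell monad wp} via \cref{rem:cell monad} --- is routine.
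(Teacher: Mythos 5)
Your proposal is correct and follows essentially the same route as the paper: the corollary is obtained by noting that an excellent model category is a good model category satisfying \hyperlink{item:ia}{\textbf{IA}} (via the Lumsdaine--Shulman cell monads and \cite[Theorem 12.13, Theorem 12.14]{lumsdaineshulman2020goodexcellent}) and then specializing \cref{thm:ia}$(2)$, with \cref{ex:cell monad wp excellent} providing exactly the translation between \cref{def:cell monad wp} and the hypothesis of the corollary. Your extra remark on slice-stability (that $\cM_U\simeq\cM_{/U}$ is again excellent, so the phrase ``cell monad on $\cM_U$'' is meaningful) is a point the paper leaves implicit, and spelling it out is a welcome addition rather than a deviation.
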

 
This establishes the main result we were aiming for. Notice, we established everything with a very axiomatic view on what a cell monad is. In the last part of this section, we want to analyze how this approach relates to cell monads in excellent model categories. Recall that a cell monad is generated via monad cells. We can see that monad cells are almost completely recovered in $\C_\Phi$. This requires reviewing the relation between endofunctors and monads. 

\begin{definition}
	Let $\C$ be a category and $F$ an endofunctor on $\C$. The \emph{monad generated by $F$} is a monad $\bT_F\colon \C \to \C$, satisfying the universal property that algebras over $\bT_F$ are equivalent to algebras over $F$.
\end{definition}

If the category $\C$ is locally presentable and the endofunctor fiber-wise accessible, then we can use a version of the small object argument to freely generate a fibred monad out of a fibred endofunctor \cite[Lemma 11.9]{lumsdaineshulman2020goodexcellent}, building on techniques in \cite{kelly1980transfinite}. As filter quotient categories are generally not locally presentable, we need a different approach. So, as a first step we study fibred endofunctors, fibred monads and their algebras over filter quotients.

For a given category $\C$, let $\End(\C) \to \C$ denote the Grothendieck fibration whose fiber over an object $C$ in $\C$ is given by the category of fibred endofunctors on $\C_{/C}$. The same proof as in \cref{lemma:mnd times u} gives us the following result.

\begin{lemma} \label{lemma:end times u}
	 Let $\C$ be a category and $U$ a subobject in $\C$. Let $\bT_U$ be the fibred endofunctor $\bT_C(X) = C \times U$. Then we have the following diagram of adjunctions
		\[
	\begin{tikzcd}[column sep= 2cm ]
		\End(\C_{U}) \arrow[d, "P_U"]  \arrow[r, bend left=7, "\pi"] \arrow[r, leftarrow, bend right=7, "\bot", "P_{\bT_U}"']& \End(\C) \arrow[d, "P"] \\ 
		\C_{U} \arrow[r, bend left=7, "\pi"] \arrow[r, leftarrow, bend right=7, "\bot", "P_U"'] & \C
	\end{tikzcd}.
	\]
\end{lemma}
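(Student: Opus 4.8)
The plan is to run the proof of \cref{lemma:mnd times u} essentially verbatim, with the fibred-monad construction $\Mnd(-)$ replaced throughout by the fibred-endofunctor construction $\End(-)$. That argument has exactly two inputs: a pullback square expressing $\Mnd(\C_U)$ as $\C_U \times_\C \Mnd(\C)$ (\cref{lemma:pullback mnd}), and the existence of a terminal object in $\Mnd(\C)$; together these let \cref{lemma:adj fcoswp} produce the displayed square of adjunctions and identify the right adjoint. Both inputs have immediate endofunctor analogues, so I would first establish those two facts and then quote \cref{lemma:adj fcoswp}.

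First, the endofunctor analogue of \cref{lemma:pullback mnd}: the square with vertices $\End(\C_U)$, $\End(\C)$, $\C_U$, $\C$ is a pullback. This is immediate from the definition of the Grothendieck fibration $\End(-)\to(-)$, because for any object $C$ of $\C_U$ every object of $\C$ lying over $C$ automatically maps to $U$, so $(\C_U)_{/C} = \C_{/C}$; hence the fibre of $\End(\C_U) \to \C_U$ over $C$ coincides with the fibre of $\End(\C) \to \C$ over $C$, and the reindexing functors along morphisms of $\C_U$ agree as well. Consequently $\End(\C)_U := \C_U \times_\C \End(\C) \cong \End(\C_U)$.

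Second, $\End(\C)$ has a terminal object, namely the locally constant fibred endofunctor $\bT_C(X) = C$, which fibrewise is the constant functor at the terminal object $\id_C$ of $\C_{/C}$; as in \cref{lemma:mnd times u} (and \cref{rem:terminal}) this lies over the terminal object of $\C$. Now I apply \cref{lemma:adj fcoswp} to the Grothendieck fibration $\End(\C) \to \C$ and the subterminal object $U$: part (1) yields the commuting square of adjunctions with a Cartesian right adjoint to $\End(\C)_U \to \End(\C)$, which, under the identification $\End(\C)_U \cong \End(\C_U)$ from the previous step, is precisely the top row of the asserted diagram; part (2) identifies this right adjoint as $P_{\hat U}$, with $\hat U$ the domain of the Cartesian lift of $U \to 1$ along the terminal fibred endofunctor. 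The only remaining point is to check that $\hat U$ is the fibred endofunctor $\bT_U$ given by $\bT_C(X) = C \times U$, which is the same explicit computation inside the Grothendieck fibration $\End(\C) \to \C$ that appears in the proof of \cref{lemma:mnd times u}.

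I do not expect any genuine obstacle: the whole argument is parallel to \cref{lemma:mnd times u}, and if anything slightly easier, since $\End(\C)$ carries no monad structure whose preservation must be tracked. The one point that genuinely requires a computation — the identification $\hat U = \bT_U$, i.e.\ that reindexing the terminal fibred endofunctor along $U \hookrightarrow 1$ produces the endofunctor $X \mapsto C \times U$ on each slice — is routine and identical in form to the monad case.
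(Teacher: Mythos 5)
Your proposal is correct and matches the paper's approach: the paper proves this lemma by simply invoking ``the same proof as in \cref{lemma:mnd times u}'', which is exactly the argument you spell out (the endofunctor analogue of the pullback square of \cref{lemma:pullback mnd}, the terminal object $\bT_C(X)=C$ of $\End(\C)$, and then \cref{lemma:adj fcoswp}). Your explicit justification of the pullback square via $(\C_U)_{/C}=\C_{/C}$ and your flagging of the identification $\hat{U}=\bT_U$ are details the paper leaves implicit, but they are correct and routine.
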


Similar to \cref{lemma:mnd phi}, \cref{lemma:end times u} implies that every fibred endofunctor on $\C$	induces a fibred endofunctor on $\C_\Phi$, which we again denote $F_\Phi$. \cref{lemma:end times u} also implies that a filter of subterminal objects $\Phi$ on $\C$ induces a filter of subterminal objects on $\End(\C)$, which we also denote by $\Phi$, giving us the following definition.

\begin{definition} \label{def:end phi}
	Let $\C$ be a category and $\Phi$ a filter of subterminal objects. A $\Phi$-fibred endofunctor is an object in $\End(\C)_\Phi$.
\end{definition}

An object in $\End(\C)_\Phi$ is a fibred endofunctor on $\C_\Phi$, that is equal to $F_\Phi$ for some fibred endofunctor $F$ on $\C_U$ for some $U$ in $\Phi$. We now want to relate fibred endofunctors and free fibred monads.

\begin{lemma} \label{lemma:free monad}
	Let $F$ be an object in $\End(\C)_\Phi$, with lift $F_U$. Assume that $\bT_U$ is the free monad associated to $F_U$. Then the image of $\bT_U$ in $\Mnd(\C)_\Phi$ is the free monad associated to $F$. 
\end{lemma}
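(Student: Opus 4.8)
\emph{Proof proposal.} The plan is to argue through the universal property of the free monad. Recall there is a forgetful functor $V\colon\Mnd(\C)\to\End(\C)$, and that $\bT_F$ is the free monad on $F$ precisely when there is a universal arrow $\eta_F\colon F\to V(\bT_F)$ in $\End(\C)$; I will show this universal arrow is preserved under passage to the filter quotient. As in \cref{rem:terminal} I first reduce to the case where $U$ is terminal, so that $F_U$ is a fibred endofunctor on $\C$ itself, $\bT_U$ is its free fibred monad with universal arrow $\eta\colon F_U\to V(\bT_U)$, and $F$ is the $\Phi$-fibred endofunctor it induces. By \cref{lemma:end times u} and \cref{lemma:mnd times u} the forgetful functor is compatible with the filter structures — it carries the distinguished subterminal monad $\bT_W$ to the distinguished subterminal endofunctor $\bT_W$ — hence it descends to functors $V_W\colon\Mnd(\C_W)\to\End(\C_W)$ for each $W$ in $\Phi$ and, in the colimit, to $V_\Phi\colon\Mnd(\C)_\Phi\to\End(\C)_\Phi$, together with a projected arrow $\eta_\Phi\colon F\to V_\Phi\bigl((\bT_U)_\Phi\bigr)$.

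The crux is to show that for every $W$ in $\Phi$ the image $(\bT_U)_W$ of $\bT_U$ under the transition functor $\Mnd(\C)\to\Mnd(\C_W)$ remains the free fibred monad on the image $(F_U)_W$ over $\C_W$. By \cref{lemma:adj fcoswp}, applied to the fibrations $\Mnd(\C)\to\C$ and $\End(\C)\to\C$ (which have terminal objects), this transition functor is the Cartesian right adjoint to the fully faithful inclusion $\Mnd(\C_W)\hookrightarrow\Mnd(\C)$, and on an object lying over the terminal object it is computed as reindexing along $W\to 1$. Since $(\C_W)_{/c}=\C_{/c}$ for every $c$ in $\C_W$, this reindexing does not alter the fibres: $(\bT_U)_W$ and $(F_U)_W$ carry fibrewise exactly the same monad and endofunctor structures as $\bT_U$ and $F_U$, and the equivalence of fibred categories $\bT_U$-algebras $\simeq F_U$-algebras that witnesses freeness is pulled back along $W\to 1$ to the corresponding equivalence over $\C_W$. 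Concretely the universal arrows $\eta_W\colon(F_U)_W\to V_W((\bT_U)_W)$ are just restrictions of $\eta$ and are natural in $W\in\Phi^{op}$.

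Finally I would conclude using the description of filter quotients as filtered colimits (\cref{lemma:filtered colimits filter quotients}, \cref{lemma:filter quotient fibration}): $\C_\Phi$, $\End(\C)_\Phi$ and $\Mnd(\C)_\Phi$ are the filtered colimits of the $\C_W$, $\End(\C_W)$ and $\Mnd(\C_W)$, and $\eta_\Phi$ is the induced colimit of the $\eta_W$. To see $\eta_\Phi$ is universal, take any $\Phi$-fibred monad $\bG$ and any morphism $\phi\colon F\to V_\Phi(\bG)$ in $\End(\C)_\Phi$; since the colimits are filtered, $\bG$ and $\phi$ are represented at some finite stage $W$, where $\eta_W$ is universal, producing a monad morphism $(\bT_U)_W\to\bG_W$ over $\phi$ whose image in $\Mnd(\C)_\Phi$ supplies the required factorization, with uniqueness obtained by passing to a common refinement. (Alternatively one can invoke \cref{lemma:adjunctions} stagewise, the transition functors being right adjoints, to transport the functor represented by $\bT_U$ to the one represented by $(\bT_U)_W$, and then pass to the colimit.) The only genuinely delicate point is the second paragraph — checking that the transition functors on $\Mnd$ and $\End$ are fibre-preserving reindexings, so that freeness, being a fibrewise property, survives; everything else is the formal fact that a filtered colimit of universal arrows is universal.
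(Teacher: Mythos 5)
There is a genuine mismatch between what you prove and what the lemma asserts. In this paper the free monad (the ``monad generated by $F$'') is \emph{defined} by the algebra condition: $\bT_F$ is free precisely when $\bT_F$-algebras are equivalent to $F$-algebras. That condition is absolute and fibrewise, and the paper's proof is correspondingly short: the defining equivalence $\eta^*\colon \bT_U{-}\Alg \to F_U{-}\Alg$ is preserved by the filter quotient construction (applying $P_\Phi$, i.e.\ a filtered colimit of categories, preserves equivalences), and identifying the quotients of the algebra fibrations with $\bT_\Phi{-}\Alg$ and $F_\Phi{-}\Alg$ gives exactly the required property for the induced $[\eta]\colon F_\Phi\to\bT_\Phi$. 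Your argument instead targets the universal-arrow characterization with respect to $V_\Phi\colon\Mnd(\C)_\Phi\to\End(\C)_\Phi$, and your concluding filtered-colimit step only tests against monads $\bG$ that are objects of $\Mnd(\C)_\Phi$, i.e.\ that descend to some stage $W\in\Phi$ --- that is exactly what the step ``$\bG$ and $\phi$ are represented at some finite stage'' uses. So even on the universal-arrow reading you only obtain freeness relative to $\Phi$-fibred monads, not among all fibred monads on $\C_\Phi$; and in any case a universal-arrow freeness does not automatically yield the algebra equivalence that the paper's definition (and its later use in \cref{prop:free polynomial monad} and the cell-monad corollary) requires --- the implication ``free $\Rightarrow$ algebraically free'' is precisely the direction that needs infinitary hypotheses which $\C_\Phi$ lacks. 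Your stagewise claim also tacitly uses the opposite implication (``algebraically free $\Rightarrow$ free''), which is true but never justified.

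The good news is that your second paragraph already contains the correct proof: the equivalence of algebra categories is fibrewise, restricts along $\C_W\subseteq\C$ without changing the slices, and hence passes to the filtered colimit. If you simply assemble those stagewise equivalences (equivalently, apply $P_\Phi$ directly to $\eta^*$) and identify $(\bT_U{-}\Alg)_\Phi$ with $\bT_\Phi{-}\Alg$ and $(F_U{-}\Alg)_\Phi$ with $F_\Phi{-}\Alg$, you land exactly on the paper's argument. The detour through $V_\Phi$, stagewise universal arrows, and colimits of universal arrows is unnecessary, and it is where the argument drifts away from the statement being proved.
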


\begin{proof}
	By construction there is a natural transformation $\eta\colon F_U \to \bT_U$ and the restriction map $\eta^*\colon \bT_U{-}\Alg \to \F_U{-}\Alg$, that sends an algebra $\bT_UX \to X$ to the algebra $F_UX \xrightarrow{ \ \eta \ } \bT_UX \to X$, is an equivalence of categories. Applying $P_\Phi$ preserves equivalences of categories, meaning 
	\[[\eta]^*\colon \bT_\Phi{-}\Alg \to \F_\Phi{-}\Alg\] 
	is also an equivalence, where $[\eta]$ is the induced natural transformation $F_\Phi \to \bT_\Phi$.
\end{proof}
 
We now apply this result to monad cells on $\C_\Phi$. 

\begin{definition}
	Let $\C$ be a locally Cartesian closed category. A \emph{polynomial endofunctor} is a functor $Q\colon \C_{/C} \to \C_{/C}$ of the form $\C_{/C} \xrightarrow{ f^* } \C_{/A} \xrightarrow{g _*} \C_{/B} \xrightarrow{h_!} \C_{/C}$, for a given triple $(f,g,h) = C \xleftarrow{f} A \xrightarrow{g} B \xrightarrow{h} C$ in $\C$. 
\end{definition}

\begin{example} \label{ex:polynomial endofunctor}
	Let $\C$ be a simplicially enriched locally Cartesian closed category, $f\colon A \to B$ a morphism in $\C$ and $K$ a simplicial set. Then we obtain a fibred endofunctor $F^{f,K}$ on $\C$, which on $\C_{/C}$ is given by $F^f \otimes K$, where $F^f$ is the polynomial functor $(\pi_2\colon A \times C \to C, f \times C, \pi_2\colon B \times C \to C)$.
\end{example}

We now have the following fundamental fact about the fibred endofunctor $F^{f,K}$. 

\begin{lemma} \label{lemma:endomorphism lift}
	Let	$\C$ be a simplicially enriched locally Cartesian closed category, $\Phi$ a simplicial filter of subterminal objects, $f$ a morphism in $\C_\Phi$, and $K$ a finite simplicial set. Then the fibred endofunctor on $F^f\otimes K$ is in $\End(\C)_\Phi$.
\end{lemma}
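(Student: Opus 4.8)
The plan is to realise $F^f\otimes K$ as coming from a single level of the filtered colimit presentation $\C_\Phi=\colim_{U\in\Phi^{op}}\C_U$ of \cref{lemma:filtered colimits filter quotients}. First I would use that presentation to choose a $U\in\Phi$ together with a morphism $\tilde f\colon\tilde A\to\tilde B$ in $\C_U$ whose image under the colimit inclusion $\C_U\to\C_\Phi$ is $f$. The subcategory $\C_U$ is again simplicially enriched, locally Cartesian closed, and cotensored over finite simplicial sets — it is equivalent to the slice $\C_{/U}$, or, if one prefers, it is the filter quotient of $\C$ by the principal filter generated by $U$, so that \cref{prop:filter quotient category} applies. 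Hence \cref{ex:polynomial endofunctor} produces an honest fibred endofunctor $G=F^{\tilde f}\otimes K$ on $\C_U$, i.e.\ an object of $\End(\C)_U$ lying over the terminal object. By \cref{lemma:end times u} such a $G$ induces a fibred endofunctor $G_\Phi$ on $\C_\Phi$, and the assertion of the lemma is precisely that $G_\Phi=F^f\otimes K$, which exhibits $F^f\otimes K$ as an element of $\End(\C)_\Phi$.

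To prove the identity $G_\Phi=F^f\otimes K$ I would check that the colimit inclusion $\C_U\to\C_\Phi$ carries the construction of \cref{ex:polynomial endofunctor} applied to $\tilde f$ to the same construction applied to $f$. On the slice over an object $C$ of $\C_\Phi$ the functor $F^{\tilde f}$ is the composite of the pullback $\pi_2^{*}$ along $\tilde A\times C\to C$, the dependent product $(\tilde f\times C)_{*}$, and the dependent sum $(\pi_2)_{!}$ along $\tilde B\times C\to C$, after which one tensors with $K$. Pullbacks are finite limits and dependent products are exactly the local exponentials, both preserved by the filter-quotient projections by \cref{prop:filter quotient category}(1); dependent sums are post-composition and are preserved by any functor compatible with slicing; and, since $\Phi$ is a simplicial filter, the projections preserve the tensor by the \emph{finite} simplicial set $K$ by \cref{prop:filter quotient category}(3), which is the only place the finiteness hypothesis on $K$ enters. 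Uniformity over $C$ is handled by the filtered colimit: every object of $\C_\Phi$ is represented at some level $U'\in\Phi$, and passing to a common lower bound $W\le U,U'$ in $\Phi$ (available since $\Phi$ is intersection-closed) the whole computation takes place inside $\C_W$ and is then transported to $\C_\Phi$, where by the preservation statements above it agrees with $F^f\otimes K$ evaluated at $C$.

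The step I expect to be the main obstacle is the compatibility of the dependent product $g_{*}$ with the colimit: right adjoints, unlike finite limits and left adjoints, are not automatically preserved by a filtered colimit of categories, so the argument must genuinely invoke the fact recorded in \cref{prop:filter quotient category} that the filter-quotient projections preserve local exponentiability, equivalently the locally Cartesian closed structure, rather than any purely formal colimit-preservation property. The remaining verifications — finite limits, dependent sums, and tensoring by a finite simplicial set — are either formal or already contained in \cref{prop:filter quotient category}.
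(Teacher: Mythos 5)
Your proposal is correct and follows essentially the same route as the paper: lift $f$ to some $\hat f$ in $\C_U$ and use the preservation statements of \cref{prop:filter quotient category} (pullbacks, local exponentials/pushforwards, and the finite simplicial tensor, the last requiring the simplicial filter hypothesis) to identify $(F^{\hat f}\otimes K)_\Phi$ with $F^f\otimes K$. Your write-up simply spells out in more detail the slice-by-slice verification and the role of the dependent product that the paper's three-sentence proof leaves implicit.
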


\begin{proof}
	By definition, there exists a $U$ in $\Phi$, such that $f$ lifts to a morphism $\hat{f}$ in $\C_U$. By \cref{prop:filter quotient category}, the filter quotient construction preserves pullbacks, pushforwards, and simplicial tensor. Hence $(F^{\hat{f}} \otimes K)_\Phi = F^{f} \otimes K$. 
\end{proof}

\begin{proposition} \label{prop:free polynomial monad}
	Let $\C$ be locally presentable, locally Cartesian closed category and simplicially enriched, $\Phi$ a filter of subterminal objects, $f$ a morphism in $\C_\Phi$, and $K$ a finite simplicial set. Then the fibred endofunctor $F^f \otimes K$ on $\C_\Phi$ has an associated free fibred monad $\bT^{f,K}$ that is in $\Mnd(\C)_\Phi$. 
\end{proposition}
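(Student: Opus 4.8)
The plan is to descend one level to a slice $\C_U = \C_{/U}$ for a suitable $U \in \Phi$, build the free fibred monad there using the presentability-based machinery of \cite{lumsdaineshulman2020goodexcellent}, and then push it back up to the filter quotient via \cref{lemma:free monad}.

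First I would apply \cref{lemma:endomorphism lift} to conclude that $F^f \otimes K$ lies in $\End(\C)_\Phi$: there is a $U \in \Phi$ and a morphism $\hat f$ in $\C_U$ lifting $f$ such that the polynomial-tensor fibred endofunctor $F^{\hat f} \otimes K$ on $\C_U$, formed as in \cref{ex:polynomial endofunctor}, satisfies $(F^{\hat f} \otimes K)_\Phi = F^f \otimes K$. (Here one uses that $\Phi$ is a simplicial filter; in the present locally presentable simplicially enriched setting this is either assumed or automatic.)

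Next I would check that the slice $\C_U = \C_{/U}$ still satisfies the hypotheses of \cite[Lemma 11.9]{lumsdaineshulman2020goodexcellent}: it is locally presentable (a slice of a locally presentable category is locally presentable), locally Cartesian closed (since $(\C_{/U})_{/(C \to U)} \simeq \C_{/C}$ is Cartesian closed for every $C$ over $U$), and simplicially enriched. Moreover, fiber-wise $F^{\hat f} \otimes K$ is a polynomial endofunctor $h_! g_* f^*$ postcomposed with $- \otimes K$, and this is accessible: $f^*$ and $h_!$ are cocontinuous, the dependent product $g_*$ is accessible on a locally presentable category, and $- \otimes K$ is cocontinuous since $K$ is finite. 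Hence \cite[Lemma 11.9]{lumsdaineshulman2020goodexcellent} produces a free fibred monad $\bT^{\hat f, K}$ on $\C_U$ associated to $F^{\hat f} \otimes K$, i.e.\ a fibred monad whose algebras are naturally the algebras of $F^{\hat f} \otimes K$.

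Finally, I would transport this back to the filter quotient. By \cref{lemma:free monad}, the image of $\bT^{\hat f, K}$ in $\Mnd(\C)_\Phi$ is the free fibred monad on $\C_\Phi$ associated to $(F^{\hat f} \otimes K)_\Phi = F^f \otimes K$, so I would set $\bT^{f,K} \coloneqq (\bT^{\hat f, K})_\Phi$, which by \cref{def:mnd phi} is a $\Phi$-fibred monad, hence an object of $\Mnd(\C)_\Phi$, as claimed. The substantive content is already packaged in \cref{lemma:endomorphism lift} and \cref{lemma:free monad}, so the only real work is the two bookkeeping verifications above — transferring the hypotheses of \cite[Lemma 11.9]{lumsdaineshulman2020goodexcellent} to the slice $\C_U$, and identifying ``the image of $\bT^{\hat f,K}$ in $\Mnd(\C)_\Phi$'' with the $\Phi$-fibred monad $(\bT^{\hat f,K})_\Phi$ of \cref{def:mnd phi}. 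I expect the latter identification to be the fussiest point, but it is routine and presents no genuine obstacle.
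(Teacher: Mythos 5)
Your proposal is correct and follows essentially the same route as the paper's proof: lift $F^f\otimes K$ to some $\C_U$ via \cref{lemma:endomorphism lift}, invoke \cite[Lemma 11.9]{lumsdaineshulman2020goodexcellent} there to generate the free fibred monad, and descend via \cref{lemma:free monad} to identify its image in $\Mnd(\C)_\Phi$ as the free monad on $F^f\otimes K$. The extra verifications you include (that the slice $\C_U$ inherits local presentability, local Cartesian closure and the enrichment, and that the endofunctor is accessible) are details the paper leaves implicit, and your remark about the simplicial hypothesis on $\Phi$ correctly flags a point the statement itself glosses over.
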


\begin{proof}
 By \cref{lemma:endomorphism lift}, $F^{f} \otimes K = (F^{\hat{f}} \otimes K)_\Phi$, for some $\hat{f}$ in $\C_U$. By \cite[Lemma 11.9]{lumsdaineshulman2020goodexcellent}, this generates a free fibred monad $\bT^{f,K}$ on $\C_U$.  Finally, by \cref{lemma:free monad}, $\bT^{f,K}$ is the free fibred monad associated to $F^{f} \otimes K$ on $\C_\Phi$, and hence by definition in $\Mnd(\C)_\Phi$.
\end{proof}

We now have the following immediate corollary, which gives a more concrete understanding of cell monads in filter quotient categories.

\begin{corollary}
	Let $\cM$ be an excellent model category, $\Phi$ a simplicial model filter, $f$ a fibration in $\cM_\Phi$, and $A \to B$ an inclusion of finite simplicial sets. Then the free monad cell $\bT^{f,A} \to \bT^{f,B}$ on $\cM_\Phi$ exists and is a cell monad on $\cM_\Phi$.
\end{corollary}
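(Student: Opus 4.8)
The plan is to reduce the statement to \cref{prop:free polynomial monad}, applied once with $K=A$ and once with $K=B$, together with the description of $\Mnd(\cM)_\Phi$ as a filtered colimit. An excellent model category is in particular locally presentable, locally Cartesian closed and simplicially enriched, and a simplicial model filter is in particular a simplicial filter of subterminal objects, so \cref{lemma:endomorphism lift} applies and $F^f\otimes A$, $F^f\otimes B$ are $\Phi$-fibred endofunctors on $\cM_\Phi$. Then \cref{prop:free polynomial monad} directly produces the free fibred monads $\bT^{f,A}$ and $\bT^{f,B}$ on $\cM_\Phi$ and places them in $\Mnd(\cM)_\Phi$. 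This already settles the existence of the two monads; what remains is to produce the comparison map $\bT^{f,A}\to\bT^{f,B}$ and to identify it as a cell monad in the sense of \cref{def:cell monad}.

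To build the comparison map I would first descend to a common level. By \cref{lemma:mnd times u} the filter $\Phi$ induces a filter of subterminal objects on $\Mnd(\cM)$, and by \cref{lemma:filtered colimits filter quotients} (applied to $\Mnd(\cM)$) the category $\Mnd(\cM)_\Phi$ is the filtered colimit $\colim_{U\in\Phi^{op}}\Mnd(\cM_U)$. Since $\Phi^{op}$ is filtered and only finitely many pieces of data are involved — the fibration $f$ and the two free monads — there is a single $U\in\Phi$ and a fibration $\hat f$ in $\cM_U$ lifting $f$ such that $\bT^{f,A}=(\bT^{\hat f,A})_\Phi$ and $\bT^{f,B}=(\bT^{\hat f,B})_\Phi$, where $\bT^{\hat f,A},\bT^{\hat f,B}$ are the free fibred monads on $\cM_U$ generated by $F^{\hat f}\otimes A$ and $F^{\hat f}\otimes B$; these exist by \cite[Lemma 11.9]{lumsdaineshulman2020goodexcellent} because $\cM_U$, the slice of the excellent model category $\cM$ over the fibrant subterminal object $U$, is again excellent. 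The inclusion $A\hookrightarrow B$ induces, by functoriality of the simplicial tensor, a natural transformation $F^{\hat f}\otimes A\to F^{\hat f}\otimes B$ and hence, by the universal property of free monads, a map $\bT^{\hat f,A}\to\bT^{\hat f,B}$, which is precisely the free monad cell on $\cM_U$ in the sense of \cite[Definition 11.10]{lumsdaineshulman2020goodexcellent}. Applying $P_\Phi\colon\cM_U\to\cM_\Phi$, equivalently passing to the colimit, produces $\bT^{f,A}\to\bT^{f,B}$ (with correct source and target by \cref{lemma:free monad}); being of the form $(\bT^{\hat f,A}\to\bT^{\hat f,B})_\Phi$ for a genuine monad cell on $\cM_U$, it is a cell monad on $\cM_\Phi$ by \cref{def:cell monad}, which is what we want.

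The only place where the argument carries genuine content rather than being purely formal is the construction of the free monads themselves: $\cM_\Phi$ is not locally presentable, so the small object argument behind \cite[Lemma 11.9]{lumsdaineshulman2020goodexcellent} is not available on it, and one is forced to descend to a level $U\in\Phi$, run the construction in the excellent model category $\cM_U$, and transport it back along \cref{lemma:free monad} — which is exactly what \cref{prop:free polynomial monad} already packages. Everything else (choosing a single $U$ handling $f$, $A$ and $B$ simultaneously, and the naturality of $-\otimes(-)$ in the simplicial argument) is routine bookkeeping.
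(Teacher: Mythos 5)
Your argument is correct and follows essentially the same route as the paper's own proof: lift $f$ to a fibration $\hat f$ over some $U\in\Phi$ (the paper cites \cref{thm:filter quotient model structure} for exactly this), form the monad cell $\bT^{\hat f,A}\to\bT^{\hat f,B}$ in $\cM_U$ in the sense of \cite[Definition 11.10]{lumsdaineshulman2020goodexcellent}, and transport it to $\cM_\Phi$ via \cref{prop:free polynomial monad} (with \cref{lemma:free monad} identifying source and target). Your invocation of excellence of the slice $\cM_U$ is a bit more than is needed --- local presentability, local Cartesian closure and simplicial enrichment of the slice already suffice for \cite[Lemma 11.9]{lumsdaineshulman2020goodexcellent}, which is all that \cref{prop:free polynomial monad} uses --- but this is harmless.
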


\begin{proof}
 By \cref{thm:filter quotient model structure}, there exists a fibration $\hat{f}$ in $\cM_U$ for some $U$ in $\Phi$, such that $P_\Phi(\hat{f}) = f$. The result now follows from the definition of cell monad \cite[Definition 11.10]{lumsdaineshulman2020goodexcellent} and \cref{prop:free polynomial monad}.
\end{proof}

\subsection{Univalent Universes} \label{subsec:universes}
In this subsection we prove that the filter quotient construction preserves univalent universes. This requires reviewing some aspects of its semantics, as presented in \cite{shulman2019inftytoposunivalent}. Intuitively a universe in a model category $\cM$ is a fibrant object $U$ in $\cM$, such that morphisms $X \to U$ are in an appropriate sense equivalent to fibrations $Y \twoheadrightarrow X$, functorial in $X$. Evidently, an object $U$ induces a representable functor $\cM(-,U)$, so as a first step we need to make the other side functorial.

\begin{definition} \label{def:bFib}
	Let $\cM$ be a finitely complete category. $\bM\colon\cM^{op} \to \Grpd$ is the pseudo-functor that takes $X$ in $\cM$ to $(\cM_{/X})^\simeq$, meaning the underlying groupoid of $\cM_{/X}$. If $\cM$ has additionally a model structure, we let $\bFib\colon\cM^{op} \to \Grpd$ be the full sub-functor of $\bM$ with objects fibrations in $\cM$.
\end{definition}

It would be unreasonable to expect an actual equivalence of groupoids between $\cM(-,U)$ and $\bM$, as it would imply $\bM$ takes values in sets. What we need is a more refined notion of equivalence that fits the situation. The solution is to transfer parts of the model structure from $\cM$ to the category of pseudo-functors $\Fun^{ps}(\cM^{op},\Grpd)$, as was originally realized by Shulman \cite[Definition 5.1]{shulman2019inftytoposunivalent}. 

\begin{definition} \label{def:acyclic fibration}
	Let $\cM$ be a model category and $\bX, \bY\colon \cM^{op} \to \Grpd$ pseudo-functors. A morphism $\alpha\colon \bX \to \bY$ is an \emph{acyclic fibration}, if it has the right lifting property with respect to $\cM(-,A) \to \cM(-,B)$ for all cofibrations $A \to B$ in $\cM$.
\end{definition}

We can use \cref{def:acyclic fibration} to define universes for an even broader class of pseudo-functors, as the one introduced in \cref{def:bFib}, namely \emph{locally representable and relatively acyclic notion of fibred structure covering all fibrations (LPaRANoFScaF)}. 

\begin{definition}[{\cite[Definition 3.1, Definition 3.10, Definition 5.11]{shulman2019inftytoposunivalent}}] \label{def:lparanofscaf}
	Let $\cM$ be a model category. A \emph{locally representable and relatively acyclic notion of fibred structure covering all fibrations (LPaRANoFScaF)} consists of the following data and conditions:
	\begin{itemize}[leftmargin=*]
		\item A pseudo-functor $\bF\colon \cM^{op} \to \Grpd$
		\item A discrete fibrations of pseudo-functors $\bF \to \bM$ with small fibers.
		\item For every morphism $\cM(-,Z) \to \bM$ the pseudo-pullback along $\bF \to \bM$ is representable.
		\item The image of $\bF$ in $\bM$, denoted $|\bF|$, is precisely $\bFib$.
		\item The resulting morphism $\bF \to \bFib$ is an acyclic	fibration.
	\end{itemize} 
\end{definition}

For such an $\bF$, intuitively, we would now hope a universe is an object $U$ along with a choice of acyclic fibration $\cM(-,U) \to \bF$. However, this is still unrealistic, as it would imply $U$ itself is classified by a map $1 \to U$, which results in a situation analogous to Russell's paradox. Here the solution is straightforward: we need to suitably restrict $\bF$ to smaller classes of fibrations. 

\begin{definition}
	Let $\cM$ be a cofibrantly generated model category. Let $\bF$ be a LPaRANoFScaF and $\kappa$ a regular cardinal. Let $\bF^\kappa$ be defined as the pullback $\bF \times_{\bM}\bM^{\kappa}$, where $\bM^\kappa$ is the full sub-pseudo-functor of $\bM$ given by $\kappa$-small morphisms. 
\end{definition}

With all these pieces at hand, Shulman finally proves that for a suitable model category $\cM$ (in particular it needs to be cofibrantly generated, with underlying category locally presentable) and $\bF$ a LPaRANoFScaF, there exists a regular cardinal $\lambda$, such that for all regular cardinals $\kappa$ larger than $\lambda$, the pseudo-functor $\bF^\kappa$ has a fibrant univalent universe $U_\kappa$ \cite[Theorem 5.22]{shulman2019inftytoposunivalent}.

Moving on to our situation, we want to repeat these arguments for filter quotients. Unfortunately, that is not possible, as filter quotients are generally not cofibrantly generated. On the other hand, this machinery is primarily used to generate a tower of universes. Hence, building on work done in \cite[Appendix A]{shulman2019inftytoposunivalent}, we will formulate a precise definition of such a tower of universes and show this property is indeed preserved by the filter quotient construction.

\begin{definition} \label{def:universe}
	Let $\cM$ be a model category and $\bF$ a LPaRANoFScaF. A \emph{universe} for $\bF$ is a choice of object $U$, along with a map of pseudo-functors $\cM(-,U) \to \bF$, such that the induced map onto the image is an acyclic fibration. An object in $\bF$ that is in the image of $U$ is a fibration \emph{classified} by $U$.
\end{definition}

Having defined universes, the only thing missing is univalence. Given a fibration $\tilde{U} \to U$ in a Cartesian closed model category $\cM$, Shulman constructs the \emph{universal object of equivalences} $\Eq(\tilde{U}) \to U \times U$, along with a section of the $\Delta\colon U \to U \times U$, denoted $\idtoequiv \colon U \to \Eq(\tilde{U})$. See \cite[Section 4]{shulman2015elegantunivalence} for a detailed construction. We can now use	this overview to define univalent universes.

\begin{definition} \label{def:univalent universe}
	Let $\cM$ be a model category and $\bF$ a LPaRANoFScaF. A \emph{univalent universe} for $\bF$ is a universe $U$ for $\bF$ such that the induced map $\idtoequiv\colon U \to \Eq(\tilde{U})$  is an equivalence in $\cM$, where the fibration $\tilde{U} \to U$ is the image of the identity map $\id_U\colon U \to U$	under the map $\cM(-,U) \to \bF$. 
\end{definition}

We now use the theory of pseudo-models, as discussed in \cite[Appendix A]{shulman2019inftytoposunivalent}, to develop a theory of universes suitable to our situation. Recall that a natural pseudo-model is a category $\cM$ with a terminal object and a representable strict discrete fibration $\omega\colon \Tm \to \Ty$ in $\Fun(\cM^{op},\Grpd)$ \cite[Definition A.1]{shulman2019inftytoposunivalent}. A \emph{natural model} is a pseudo-natural model such that $\Tm$ is discrete \cite{awodey2018natural}. The \emph{canonical natural pseudo-model} of a model category $\cM$ is given by $\bFib \times_{\bM} \bM_\bullet \to \bFib$, where $\bM_\bullet \to \bM$ is the forgetful fibration from pointed objects \cite[Example A.5]{shulman2019inftytoposunivalent}. We now use this background to define universes. 

\begin{definition}[Precise formulation of \hyperlink{item:u}{\textbf{U}}] \label{def:u}
 Let $\cM$ be a model category. $\cM$ satisfies \hyperlink{item:u}{\textbf{U}} if it has the following structure and properties:
	\begin{itemize}[leftmargin=*]
		\item $\cM$ is locally Cartesian closed. 
		\item $\cM$ has a LPaRANoFScaF denoted $\bF$ (\cref{def:lparanofscaf}).
		\item A \emph{level structure} $\cL$ \cite[Definition A.12]{shulman2019inftytoposunivalent}.
		\item For every $\alpha \in \cL$, a fibrant univalent universe $U_\alpha$ in $\cM$, in the sense of \cref{def:univalent universe}, such that $U_\alpha$ is strictly closed under $\Sigma$-types, $\Pi$-types, identity types, and binary sum types, in the sense of \cite[Definition A.16]{shulman2019inftytoposunivalent}, and containing the empty type, the unit type, the natural numbers type, the spheres types $S^n$, as well as other “cell complex” types such as the torus $T^2$.		
		\item The collection of $U_\alpha$ induce an $\cL$-family of strict Tarski universes in the canonical natural pseudo-model of $\cM$ \cite[Definition A.14]{shulman2019inftytoposunivalent} that classify all fibrations in $\cM$.
	\end{itemize}
\end{definition}

Using the ideas from \cite{lumsdainewarren2015localuniverses} and the work in \cite{lumsdaineshulman2020goodexcellent}, to every natural pseudo-model and choice of family of local universes $\cV$, in the sense of \cite[Definition A.19]{shulman2019inftytoposunivalent}, we can associate a natural model $\omega_!\colon \Tm_{!,\cV} \to \Ty_{!,\cV}$ which strictly models type theoretical constructors. If $\cM$ satisfies \hyperlink{item:u}{\textbf{U}}, then we can associate to the $\cL$-family of strict Tarski universes a family of local universes $\cV$ \cite[Example A.23]{shulman2019inftytoposunivalent}, resulting in a natural model $\omega_{!,\cV}\colon \Tm_{!,\cV} \to \Ty_{!,\cV}$, which we call the \emph{associated natural model}. Building on that, by \cite[Theorem A.25]{shulman2019inftytoposunivalent}, \hyperlink{item:u}{\textbf{U}} indeed suffices to model universes, meaning we have the following result.

\begin{corollary} \label{cor:u univalent universe}
	Let $\cM$ be a model category that satisfies	\hyperlink{item:u}{\textbf{U}}. Then the associated natural model $\omega_{!,\cV}\colon \Tm_{!,\cV} \to \Ty_{!,\cV}$ has strictly stable $\Sigma$-types, $\Pi$-types, identity types, and binary sum type, and an $\cL$-family of strict Tarski universes strictly closed under all these type constructors, meaning $\cM$ models arbitrarily large univalent universes.
\end{corollary}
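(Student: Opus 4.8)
The proof is an application of the local universes machinery of \cite{lumsdainewarren2015localuniverses,lumsdaineshulman2020goodexcellent} in the form packaged by Shulman in \cite[Appendix A]{shulman2019inftytoposunivalent}; indeed, \cref{def:u} was formulated precisely so that the data of \hyperlink{item:u}{\textbf{U}} feed directly into \cite[Theorem A.25]{shulman2019inftytoposunivalent}. The first step is to unwind \hyperlink{item:u}{\textbf{U}}: it supplies a locally Cartesian closed model category $\cM$, a LPaRANoFScaF $\bF$ (\cref{def:lparanofscaf}), a level structure $\cL$, and, for each $\alpha \in \cL$, a fibrant univalent universe $U_\alpha$ (\cref{def:univalent universe}) that is strictly closed under $\Sigma$-, $\Pi$-, identity, and binary sum types and contains the listed ``cell complex'' types, together with the datum that the collection $\{U_\alpha\}_{\alpha \in \cL}$ forms an $\cL$-family of strict Tarski universes in the canonical natural pseudo-model $\bFib \times_{\bM} \bM_\bullet \to \bFib$ of $\cM$ that classifies all fibrations.

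Next I would invoke \cite[Example A.23]{shulman2019inftytoposunivalent} to convert this $\cL$-family of strict Tarski universes into a family of local universes $\cV$ in the sense of \cite[Definition A.19]{shulman2019inftytoposunivalent}, and then apply the local universes construction to obtain the associated natural model $\omega_{!,\cV}\colon \Tm_{!,\cV} \to \Ty_{!,\cV}$. By \cite[Theorem A.25]{shulman2019inftytoposunivalent} this natural model carries strictly stable $\Sigma$-, $\Pi$-, identity, and binary sum types, and the images of the $U_\alpha$ constitute an $\cL$-family of strict Tarski universes in $\omega_{!,\cV}$ that is strictly closed under these constructors. Univalence is then inherited: the local universes construction alters only the presentation of the type-theoretic structure and not the underlying fibrations of $\cM$, so the statement that $\idtoequiv\colon U_\alpha \to \Eq(\tilde{U}_\alpha)$ is a weak equivalence in $\cM$ — which is hypothesised in \hyperlink{item:u}{\textbf{U}}, the construction of $\Eq(\tilde{U}_\alpha)$ being available since $\cM$ is locally Cartesian closed — transports to univalence of the corresponding Tarski universe. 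Finally, because $\cL$ is a level structure and the family classifies all fibrations, every fibration of $\cM$ is classified by some $U_\alpha$, and the $U_\alpha$ are of unbounded size; this is exactly the assertion that $\cM$ models arbitrarily large univalent universes.

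The only point that requires genuine care — and the step I expect to be the main obstacle — is verifying that the rectification from the weakly stable Tarski structure of the canonical natural pseudo-model to the strictly stable structure of the associated natural model along $\cV$ is simultaneously compatible with all of the strict closure conditions and with univalence, i.e. that no coherence is lost in the passage. This is precisely the content of \cite[Theorem A.25]{shulman2019inftytoposunivalent}, whose hypotheses are met verbatim by the data of \hyperlink{item:u}{\textbf{U}}; the substantive work has been front-loaded into \cref{def:u}, so once that definition is in place the corollary follows.
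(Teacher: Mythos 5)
Your proposal matches the paper's argument: the corollary is obtained exactly as you describe, by converting the $\cL$-family of strict Tarski universes from \hyperlink{item:u}{\textbf{U}} into a family of local universes $\cV$ via \cite[Example A.23]{shulman2019inftytoposunivalent} and then applying \cite[Theorem A.25]{shulman2019inftytoposunivalent} to the associated natural model $\omega_{!,\cV}$. The paper gives no further argument beyond this citation chain, so your write-up is correct and follows essentially the same route.
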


We now want to prove that \hyperlink{item:u}{\textbf{U}} is preserved by the filter quotient construction. This requires several technical preliminary constructions and lemmas. Recall that to every pseudo-functor valued in groupoids denoted $\bF$, we can associate a Grothendieck fibration $\int_\C\bF \to \C$, via the Grothendieck construction. See \cite[Theorem 2.2.3]{loregianriehl2020fibration} for a detailed exposition.

\begin{definition} \label{def:pseudo-functor phi}
	Let $\C$ be a category with finite products, $\bF\colon \C^{op} \to \Grpd$ be a pseudo-functor, and $\Phi$ a filter of subterminal objects in $\C$. Then the \emph{induced pseudo-functor} $\bF_\Phi\colon \C_\Phi^{op} \to \Grpd$ is defined as the pseudo-functor associated to the Grothendieck fibration $(\int_{\C}\bF)_\Phi \to \C_\Phi$ (defined in \cref{lemma:filter quotient fibration}). 
\end{definition}

We will need the following basic observation regarding induced pseudo-functors, which is a direct computation, using the colimit description from \cref{lemma:filter quotient fibration}. 

\begin{lemma} \label{lemma:induced pseudo-functor}
 Let $\cM$ be a model category and $\Phi$ a model filter.
	\begin{itemize}[leftmargin=*]
		\item The induced pseudo-functor of $\bM$ is precisely $\bM_\Phi$, meaning the functor that maps an object $X$ in $\cM_\Phi$ to the groupoid $((\cM_{\Phi})_{/X})^\simeq$.
		\item The induced pseudo-functor of $\bFib$ is precisely $\bFib_\Phi$, meaning the functor that maps an object $X$ in $\cM_\Phi$ to the sub-groupoid of $((\cM_{\Phi})_{/X})^\simeq$ with objects fibrations.
		\item The induced	pseudo-functor of $\cM(-,X)$ is precisely $\cM_\Phi(-,X)$.
	\end{itemize}
\end{lemma}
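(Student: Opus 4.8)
The plan is to unwind \cref{def:pseudo-functor phi}. For a pseudo-functor $\bF$, the induced pseudo-functor $\bF_\Phi$ is by definition the one attached via the Grothendieck construction to the Grothendieck fibration $(\int_\cM\bF)_\Phi \to \cM_\Phi$, which by parts $(1)$ and $(2)$ of \cref{lemma:filter quotient fibration} is the filtered colimit $\colim_{U\in\Phi^{op}} (\int_\cM\bF)_U$ of the fibrations $(\int_\cM\bF)_U = \cM_U \times_\cM \int_\cM\bF \to \cM_U$. So in each of the three cases it suffices to identify, naturally in $U\in\Phi^{op}$, the fibration $(\int_\cM\bF)_U$ with the Grothendieck construction of the appropriate pseudo-functor over $\cM_U$, and then pass to the colimit, using \cref{lemma:filtered colimits filter quotients} to recognize the result as living over $\cM_\Phi = \colim_U \cM_U$.

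First I would record the concrete shape of the three total categories. The Grothendieck construction of $\bM$ is the (non-full) subcategory of the arrow category $\cM^\rightarrow$ with the same objects and with morphisms the pullback squares; that of $\bFib$ is its full subcategory on the objects that are fibrations (well defined since pullbacks of fibrations are fibrations); and that of the representable $\cM(-,X)$ is the slice $\cM_{/X}$, a discrete fibration over $\cM$. For each, restricting along the inclusion $\cM_U \hookrightarrow \cM$, i.e.\ forming $\cM_U \times_\cM -$, simply passes to the full subcategory of objects lying over $\cM_U$; since $(-)\times U$ evidently preserves pullbacks, this is again, respectively, the arrow category of $\cM_U$ with pullback squares, its restriction to fibrations, and the Grothendieck construction of the restriction of $\cM(-,X)$ to $\cM_U$. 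Here for $\bFib$ I would additionally use that a map is a fibration in $\cM_U$ iff it is one in $\cM$, and that fibrations are $\Phi$-product-stable, $a\times U\colon Y\times U \to X\times U$ being a base change of $a\colon Y\to X$ along $X\times U \to X$.

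Now I pass to the filtered colimit. For $\bM$: an object of $(\int_\cM\bM)_\Phi$ is an $\cM$-arrow $a\colon Y\to X$ (living in some $\cM_U$), which I identify with the object $X$ of $\cM_\Phi$ together with the $\cM_\Phi$-morphism $[\,Y\times U \to X\,]$ into it, using that every $U\in\Phi$ becomes terminal in $\cM_\Phi$; morphisms, being pullback squares stable under $(-)\times U$, match the same way, so the colimit is $\int_{\cM_\Phi}\bM_\Phi$ with $\bM_\Phi(X) = ((\cM_\Phi)_{/X})^\simeq$. The same computation with ``fibration'' in place of ``arrow'' gives $\bFib_\Phi$, using that $\cF_\Phi$ is precisely the class of fibrations of $\cM_\Phi$ produced by the filter quotient construction (\cref{thm:filter quotient model structure}). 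For $\cM(-,X)$ the argument is identical, and since a filtered colimit of discrete fibrations is a discrete fibration, the induced pseudo-functor is again set-valued and equals $\cM_\Phi(-,X)$. I expect the only real difficulty to be bookkeeping: each of these comparisons must be checked to be an equivalence of Grothendieck fibrations over the respective bases, compatibly with cleavages and through both the equivalence relations defining the morphisms of $\cM_\Phi$ and the $2$-categorical slack inherent in the Grothendieck construction; in particular one must be careful that an object of the colimit is an honest morphism of $\cM$ whereas an object of $\int_{\cM_\Phi}(-)$ carries only a morphism of $\cM_\Phi$, the reconciliation being exactly the observation that the objects $U\in\Phi$ are rendered terminal in $\cM_\Phi$.
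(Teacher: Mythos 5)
Your proposal is correct and follows exactly the route the paper intends: the paper offers no detailed argument, declaring the lemma ``a direct computation, using the colimit description from \cref{lemma:filter quotient fibration}'', and your unwinding — identifying $(\int_\cM\bF)_U$ with the Grothendieck construction of the restriction to $\cM_U$ and then passing to the filtered colimit via \cref{lemma:filtered colimits filter quotients}, with the observation that the objects of $\Phi$ become terminal in $\cM_\Phi$ — is precisely that computation. The bookkeeping caveats you flag (strict equality versus equivalence of fibrations, cleavages, the equivalence relation on morphisms of $\cM_\Phi$) are glossed at the same level of strictness in the paper itself.
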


\begin{lemma} \label{lemma:filter quotient trivial fibration}
	Let $\cM$ be a locally Cartesian closed model category, $\alpha\colon\bX \to \bY$ be a trivial fibration of pseudo-functors, and $\Phi$ a model filter. Then the natural transformation on induced pseudo-functors $\bX_\Phi \to \bY_\Phi$ is also a trivial fibration.
\end{lemma}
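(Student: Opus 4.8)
The plan is to reduce to the corresponding statement one stage at a time inside the filtered colimit $\cM_\Phi=\colim_{U\in\Phi^{op}}\cM_U$. Recall from \cref{def:acyclic fibration} that a morphism of pseudo-functors is a trivial fibration exactly when it has the right lifting property against the morphisms induced by all cofibrations of the ambient model category; so I would take an arbitrary lifting problem for $\bX_\Phi\to\bY_\Phi$ against $\cM_\Phi(-,A)\to\cM_\Phi(-,B)$ with $A\to B$ a cofibration of $\cM_\Phi$, descend it to a finite stage, solve it there, and transport the solution back.

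First I would describe the stages. For $U\in\Phi$ the slice $\cM_U=\cM_{/U}$ is again a locally Cartesian closed model category, whose cofibrations are those maps whose underlying morphism in $\cM$ is a cofibration, and I take $\bX_U\to\bY_U$ over $\cM_U$ to be the restriction $\pi_U^*\alpha$ along $\pi_U\colon\cM_U\to\cM$; by \cref{lemma:filter quotient fibration} together with \cref{def:pseudo-functor phi}, these, with the transition functors $P_V\colon\cM_U\to\cM_V$ for $V\le U$, form a filtered diagram with colimit $\bX_\Phi\to\bY_\Phi$, and a routine (pseudo-)Yoneda computation identifies $\pi_U^*(\cM(-,X))$ with the representable $\cM_U(-,P_UX)$. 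The key observation is then that each $\pi_U^*\alpha$ is still a trivial fibration: unwinding the pseudo-Yoneda lemma, a lifting problem for $\pi_U^*\alpha$ against $\cM_U(-,A')\to\cM_U(-,B')$ for a cofibration $j\colon A'\to B'$ of $\cM_U$ is the same data as an object of $\bX(A')$, an object of $\bY(B')$ and a witnessed isomorphism between their images in $\bY(A')$ — that is, precisely a lifting problem for $\alpha$ itself against $\cM(-,A')\to\cM(-,B')$ for the cofibration $j$ of $\cM$ — and since $\alpha$ is a trivial fibration over $\cM$ a filler exists, lives over $B'$, and is therefore already a filler over $\cM_U$.

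With this in hand the descent is bookkeeping. Given a cofibration $A\to B$ of $\cM_\Phi$, \cref{thm:filter quotient model structure} and the definition of the class $\cC_\Phi$ furnish $U_0\in\Phi$ and a cofibration $\hat A\to\hat B$ of $\cM_{U_0}$ representing it; and by \cref{lemma:induced pseudo-functor} a lifting square for $\bX_\Phi\to\bY_\Phi$ against $\cM_\Phi(-,A)\to\cM_\Phi(-,B)$ amounts to an object of $\bX_\Phi(A)$, an object of $\bY_\Phi(B)$ and a coherence isomorphism in $\bY_\Phi(A)$. Since $\bX_\Phi$, $\bY_\Phi$ and the representables are filtered colimits over $\Phi^{op}$ of the corresponding gadgets over the $\cM_U$, this finite package factors through some stage $U\le U_0$: it is the image under $P_{U,\Phi}$ of a lifting problem for $\pi_U^*\alpha$ against $\cM_U(-,\hat A_U)\to\cM_U(-,\hat B_U)$ with $\hat A_U\to\hat B_U$ a cofibration of $\cM_U$. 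By the previous paragraph that problem has a filler, and applying $P_{U,\Phi}$ yields a filler of the original; hence $\bX_\Phi\to\bY_\Phi$ is a trivial fibration. The one step requiring genuine care — the main obstacle — is this identification showing that a trivial fibration of pseudo-functors remains one after restricting along a slice projection $\pi_U$; everything else is either the standard ``compact objects in a filtered colimit'' argument or routine Yoneda manipulation.
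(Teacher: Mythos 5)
Your proposal is correct and follows essentially the same route as the paper: descend the lifting problem against $\cM_\Phi(-,A)\to\cM_\Phi(-,B)$ to a stage $\cM_U$ of the filtered colimit using the Yoneda identification of the two legs (and the coherence witness) with objects of $\bX_{U_i}$, $\bY_{U_j}$, solve it there, and push the lift forward along $P_\Phi$. Your explicit verification that the restriction of $\alpha$ to $\cM_U$ is still a trivial fibration is exactly the step the paper compresses into ``by assumption,'' so you have merely made the paper's argument slightly more careful, not different.
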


\begin{proof}
		We need to prove that $\bX_\Phi \to \bY_\Phi$ lifts against all maps of the form $\cM_\Phi(-,A) \to \cM_\Phi(-,B)$. Here $A \to B$ is a trivial cofibration in $\cM_\Phi$, meaning it is a trivial cofibration of the form $i \times U_0\colon A \to B$ in $\cM$, for some $U_0$ in $\Phi$. By the Yoneda lemma, a natural transformation $\alpha\colon\cM_\Phi(-,A) \to \bX_\Phi$ is uniquely determined by an object in the groupoid $\bX_\Phi(A) = \colim_{U \in \Phi} \bF_U(A)$, which by definition is an object $X$ in $\bX_{U_1}(A)$, for some $U_1$ in $\Phi$. Similarly, $\beta\colon \cM_\Phi(-,B) \to \bY_\Phi$ is uniquely determined by an object $Y$ in $\bY_{U_3}(B)$, for some $U_3$ in $\Phi$. Let $U \leq U_1,U_2,U_3$. Then, the diagram induced by $i, \alpha, \beta$ lifts to a diagram of pseudo-functors on $\cM_{U}$
		\[
		\begin{tikzcd}
			\cM_U(-,A)  \arrow[d, "i \times U"] \arrow[r, "\alpha"] & \bX_U \arrow[d] \\ 
			\cM_U(-,B) \arrow[r, "\beta"] \arrow[ur, dashed] & \bY_U
		\end{tikzcd}.
		\]
		By assumption, this diagram admits a lift $g\colon \cM_U(-,B) \to \bX_U$, which on filter quotients induces the desired lift $\cM_\Phi(-,B) \to \bX_\Phi$.
\end{proof}

\begin{proposition} \label{prop:induced lparanofscaf}
 Let $\cM$ be a locally Cartesian closed model category, $\bF$ a LPaRANoFScaF, and $\Phi$ a model filter. Then $\bF_\Phi$, defined via \cref{def:pseudo-functor phi}, is also a LPaRANoFScaF. 
\end{proposition}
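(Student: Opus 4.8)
The plan is to verify, one by one, the five clauses of \cref{def:lparanofscaf} for $\bF_\Phi$ over $\cM_\Phi$, exploiting that $\bF_\Phi$, $\bM_\Phi$, $\bFib_\Phi$, and each representable $\cM_\Phi(-,X)$ are induced pseudo-functors in the sense of \cref{def:pseudo-functor phi}, as identified in \cref{lemma:induced pseudo-functor}. Throughout, the operative structural fact is that passage to the induced pseudo-functor is the composite: take the Grothendieck construction over $\cM$, pull back along each discrete fibration $\cM_U \to \cM$ ($U \in \Phi$), take the filtered colimit over $\Phi^{op}$ (as in \cref{lemma:filter quotient fibration}), and pass back to pseudo-functors over $\cM_\Phi$. (That $\cM_\Phi$ is itself a locally Cartesian closed model category, so that the statement makes sense, is already available from \cref{prop:filter quotient category} and \cref{thm:filter quotient model structure}.)

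Clause (1) is immediate from \cref{def:pseudo-functor phi} and \cref{lemma:filter quotient fibration}. For clause (2), a discrete fibration $\bF \to \bM$ of pseudo-functors is a discrete fibration of Grothendieck fibrations over $\cM$; discrete fibrations are stable under pullback and — by the argument of \cref{lemma:filtered colimit fibration} with ``Grothendieck fibration'' replaced throughout by ``discrete fibration'' — under filtered colimits, so $\bF_\Phi \to \bM_\Phi$ is a discrete fibration, and its fibers, being filtered colimits over $\Phi^{op}$ of the small fibers of the $\bF_U \to \bM_U$, remain small. Clause (4) is handled similarly: the essential image of a morphism of pseudo-functors is computed fiberwise, and, filtered colimits of groupoids being exact, it commutes with the induced-pseudo-functor construction, so $|\bF_\Phi| = (|\bF|)_\Phi = \bFib_\Phi$ using $|\bF| = \bFib$ together with \cref{lemma:induced pseudo-functor}. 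Clause (5) follows directly from \cref{lemma:filter quotient trivial fibration} applied to the acyclic fibration $\bF \to \bFib$.

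The substantive point is clause (3), local representability. Given a morphism $\cM_\Phi(-,Z) \to \bM_\Phi$, the Yoneda lemma turns it into an object $\xi \in \bM_\Phi(Z)$. Since $\bM_\Phi$ is the filtered colimit over $\Phi^{op}$ of the $\bM_U$, and since $X \times U \cong X$ in $\cM_\Phi$ whenever $U \in \Phi$ (so that, after enlarging $U_0$, the object $Z$ may be taken to lie in $\cM_{U_0}$), one may choose a representative of $\xi$ of the form $\xi_0 \in \bM(Z \times U_0)$ for some $U_0 \in \Phi$ — equivalently a morphism $\cM(-, Z \times U_0) \to \bM$ over $\cM$ whose induced pseudo-functor recovers the given map, under the identification $\cM_\Phi(-, Z \times U_0) \simeq \cM_\Phi(-, Z)$. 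As $\bF$ is a LPaRANoFScaF over $\cM$, the pseudo-pullback of $\bF \to \bM$ along $\cM(-, Z \times U_0) \to \bM$ is represented by some object $R$ of $\cM$. Finally, the induced-pseudo-functor construction commutes with pseudo-pullbacks — under the Grothendieck construction these correspond to iso-comma objects of fibrations, which are preserved both by pullback along the discrete fibrations $\cM_U \to \cM$ and by filtered colimits — and it sends representables to representables by \cref{lemma:induced pseudo-functor}; hence the pseudo-pullback of $\bF_\Phi \to \bM_\Phi$ along $\cM_\Phi(-,Z) \to \bM_\Phi$ is represented by $R$, as required.

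The main obstacle is this last clause: one must track the Yoneda correspondence down to a finite stage $U_0 \in \Phi$ and check that forming the filter quotient of the relevant Grothendieck fibration is compatible with the pseudo-pullback defining local representability — a piece of $2$-categorical bookkeeping. Everything else is a routine transfer of stability properties of (discrete) fibrations, essential images, and acyclic fibrations of pseudo-functors under pullback and filtered colimits, already packaged in \cref{lemma:filter quotient fibration,lemma:filtered colimit fibration,lemma:filter quotient trivial fibration,lemma:induced pseudo-functor}.
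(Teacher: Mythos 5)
Your proposal is correct and takes essentially the same approach as the paper: both proofs check the clauses of \cref{def:lparanofscaf} one by one, using \cref{lemma:induced pseudo-functor}, \cref{lemma:filter quotient trivial fibration}, filtered-colimit stability of discrete/small fibers and surjectivity, and, for local representability, the Yoneda lemma to descend the map $\cM_\Phi(-,Z)\to\bM_\Phi$ to a stage $U_0\in\Phi$ where the original LPaRANoFScaF hypothesis applies. The only cosmetic difference is that the paper takes the filtered colimit of the stage-wise representable pullbacks over $V\le U_0$, whereas you form one pullback over $\cM$ and push it through the induced-pseudo-functor construction, invoking commutation with pseudo-pullbacks --- the same underlying commutation of filtered colimits with pullbacks.
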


\begin{proof}
	We check the conditions separately, relying on the computations established in \cref{lemma:induced pseudo-functor}.
	\begin{itemize}[leftmargin=*]
		\item We need to show the fiber of $\bF_\Phi \to \bM_\Phi$ is discrete and small. This follows from the fact that the fiber of $\bF_\Phi \to \bM_\Phi$ over an object $X$ is the filtered colimit of the fibers of $\bF_U \to \bM_U$, for $U$	in $\Phi$. Hence, the results from the fact that $\bF$ is a LPaRANoFScaF, and discrete small groupoids are closed under filtered colimits.
		\item Let $\alpha\colon\cM_\Phi(-,Z) \to \bM_\Phi$ be a natural transformation. By the Yoneda lemma, the data of such a natural transformation is equivalent to a choice of morphism $p\colon W \times U \to Z \times U$, for some $U$ in $\Phi$. This means we can lift $\alpha$ to a natural transformation $\alpha_U\colon \cM_U(-,Z) \to \bM_U$, with the property that $\alpha = \colim_{V \leq U} \alpha_V$, where $\alpha_V\colon\cM_V(-,Z) \to \bM_V$ is uniquely given by $p \times V$.
	 
		Now, we know that filtered colimits commute with pullbacks, and that the filter quotient of a representable functor is representable, by \cref{lemma:induced pseudo-functor}. Hence, to finish this step, it suffices to prove the pullback of the diagram   
		\[ 
		\begin{tikzcd}
			\cM_V(-,Z) \arrow[r] & \bM_V & \bF_V \arrow[l]
		\end{tikzcd}
		\]    
		is representable. However, this is true by assumption, as $\bF$ is a LPaRANoFScaF.
		\item By assumption, the map $\bF \to \bFib$ is surjective, and filtered colimits preserve surjectivity. Hence, the map $\bF_\Phi \to \bFib_\Phi$ is also surjective, meaning $|\bF_\Phi| = \bFib_\Phi$.
		\item This follows from applying \cref{lemma:filter quotient trivial fibration} to the trivial fibration $\bF \to \bFib$. \qedhere
	\end{itemize}
\end{proof}

\begin{proposition} \label{prop:univalent universe}
	Let $\cM$ be a locally Cartesian closed model category, $\Phi$ a model filter.
	\begin{enumerate}[leftmargin=*]
		\item $P_\Phi$ preserves universes. 
		\item $P_\Phi$ preserves the fibrancy of the universe. 
		\item $P_\Phi$ preserves the univalence of the universe.
		\item $P_\Phi$ preserves any type the universe contains, including empty type, unit type, the natural numbers type, the sphere types, and other ``cell complex types''.
	\end{enumerate}
\end{proposition}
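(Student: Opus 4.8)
The plan is to apply the filter quotient operation $(-)_\Phi$ to all of the data witnessing that $U$ is a fibrant univalent universe for $\bF$, and to verify that each defining condition survives. Throughout I would lean on the structural preservation results already in hand: $P_\Phi$ preserves finite (co)limits, monomorphisms, local exponentials and natural number objects (\cref{prop:filter quotient category}); fibrations, cofibrations, weak equivalences and the (co)tensor over finite simplicial sets (\cref{thm:filter quotient model structure}); the identification of induced pseudo-functors in \cref{lemma:induced pseudo-functor}; and the preservation of trivial fibrations of pseudo-functors in \cref{lemma:filter quotient trivial fibration}. Note that the target makes sense after passing to $\cM_\Phi$, since $\bF_\Phi$ is again a LPaRANoFScaF by \cref{prop:induced lparanofscaf}.

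For (1) and (2), given a universe $U$, i.e. a map $\cM(-,U)\to\bF$ whose corestriction onto its image is an acyclic fibration, I would factor it as $\cM(-,U)\twoheadrightarrow\mathrm{im}\hookrightarrow\bF$ and apply $(-)_\Phi$. Since the filter quotient is the filtered colimit of \cref{lemma:filtered colimits filter quotients} and filtered colimits preserve objectwise essential surjections, hence images, this yields an image factorisation of $(\cM(-,U))_\Phi\to\bF_\Phi$; by \cref{lemma:induced pseudo-functor} the source is $\cM_\Phi(-,P_\Phi U)$, and by \cref{lemma:filter quotient trivial fibration} the first map is still an acyclic fibration, so $P_\Phi U$ is a universe for $\bF_\Phi$. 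Fibrancy is immediate: $P_\Phi$ preserves fibrations and the terminal object, so $P_\Phi U\to 1$ is a fibration.

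For (3), note first that by part (1) the fibration $\tilde U\to U$ classified by $\id_U$ is sent by $P_\Phi$ to the fibration classified by $\id_{P_\Phi U}$. The object $\Eq(\tilde U)\to U\times U$ and the section $\idtoequiv\colon U\to\Eq(\tilde U)$ of \cite[Section 4]{shulman2015elegantunivalence} are assembled from $\tilde U\to U$ using finite limits, pushforward along fibrations, and path objects, each preserved by $P_\Phi$, so there is a canonical identification $P_\Phi\Eq(\tilde U)\cong\Eq(P_\Phi\tilde U)$ over $P_\Phi U\times P_\Phi U$ compatible with $\idtoequiv$. Since $\idtoequiv$ is a weak equivalence in $\cM$ and $P_\Phi$ preserves weak equivalences, the same holds in $\cM_\Phi$. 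I expect this to be the only delicate point: one must check that the construction of $\Eq$, including the contractibility-of-fibres condition phrased via $\Pi$-types and path objects, is built entirely from operations $P_\Phi$ preserves, so that its formation genuinely commutes with $P_\Phi$ in a way compatible with the section.

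For (4), a type contained in $U$ is a distinguished fibration $X\twoheadrightarrow 1$ classified by some map $1\to U$; by part (1), $P_\Phi X\twoheadrightarrow 1$ is then classified by $P_\Phi U$, so it suffices to identify $P_\Phi X$ with the corresponding type in $\cM_\Phi$. For the empty and unit types this is preservation of the initial and terminal objects; for the natural numbers type it is preservation of the natural number object (\cref{prop:filter quotient category}); and for the spheres $S^n$, the torus $T^2$ and other ``cell complex'' types it follows because these are finite colimits of simplicial tensors (with cofibrations as attaching maps) of the terminal object, and $P_\Phi$ preserves finite colimits, the finite simplicial tensor, cofibrations and weak equivalences (\cref{prop:filter quotient category}, \cref{thm:filter quotient model structure}).
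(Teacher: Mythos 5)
Your proposal is correct and follows essentially the same route as the paper's proof: part (1) via \cref{lemma:filter quotient trivial fibration} applied to the corestriction onto the image, part (2) from preservation of fibrations, part (3) by observing that $\Eq(\tilde{U})$ and $\idtoequiv$ are built from structure preserved by $P_\Phi$ (so their formation commutes with $P_\Phi$) together with preservation of weak equivalences, and part (4) from \cref{prop:filter quotient category}. Your extra care in (1) (identifying the image via the filtered-colimit description) and in (4) (invoking natural number object preservation explicitly rather than subsuming it under finite (co)limits) only makes the same argument slightly more explicit.
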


\begin{proof}
	$(1)$ Let us denote the image of $\cM(-,U) \to \bF$ in $\bF$ by $|\cM(-,U)|$. By assumption, $\cM(-,U) \to |\cM(-,U)|$ is a trivial fibration of pseudo-functors, and so, by \cref{lemma:filter quotient trivial fibration}, $\cM_\Phi(-,U) \to |\cM_\Phi(-,U)|$ is also a trivial fibration, proving that $P_\Phi$ preserves universes.
	
	$(2)$ $P_\Phi$ preserves fibrancy, by \cref{thm:filter quotient model structure}. 

	$(3)$ Following \cite[Section 4]{shulman2015elegantunivalence} (see also \cref{def:univalent universe}), the universal object of equivalences $\Eq(\tilde{U}) \to U \times U$ is exclusively constructed via the locally Cartesian structure on $\cM$, which is preserved by $P_\Phi$ (\cref{prop:filter quotient category}). Hence, if $\idtoequiv\colon U \to \Eq(\tilde{U})$ is an equivalence, then it remains so after applying $P_\Phi$.
	
	$(4)$ By \cref{prop:filter quotient category}, $P_\Phi$ preserves the empty type, unit type, the natural numbers type, the spheres types, and other ``cell complex types'', as these are finite (co)limits. 
\end{proof}

	\begin{remark}
		We can gain an intuitive understanding of the fact that $P_\Phi$ preserves universes. Following \cite[Remark 5.4]{shulman2019inftytoposunivalent}, $U$ is a universe if and only if the following diagram admits a lift for a suitable choice of fibration $X \to B$.
	\[ 
		\begin{tikzcd}[row sep = .3cm]
			i^*(X) \arrow[rr] \arrow[dr] \arrow[dd, twoheadrightarrow] & & \tilde{U} \arrow[dd] \\ 
			 & X \arrow[ur, dashed] \arrow[dd, twoheadrightarrow]  & \\ 
			A \arrow[rr] \arrow[dr, "i"'] & & U \\ 
			& B \arrow[ur, dashed] & 
		\end{tikzcd}.
	\]
	We can still obtain such a lift in $\cM_\Phi$, as $P_\Phi$ preserves pullback squares (\cref{prop:filter quotient category}) and cofibrations (\cref{thm:filter quotient model structure}).
\end{remark}

\begin{proposition} \label{prop:closed universe}
	Let $\cM$ satisfy \hyperlink{item:u}{\textbf{U}} and $\Phi$ a model filter. If the $\cL$-family of pseudo Tarski universes is closed under $\Sigma$-types, $\Pi$-types, identity types, or binary sum types, then so is its image via $P_\Phi$. 
\end{proposition}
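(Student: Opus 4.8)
By \cref{prop:univalent universe} each image $(U_\alpha)_\Phi$ is again a fibrant univalent universe for the LPaRANoFScaF $\bF_\Phi$ (\cref{prop:induced lparanofscaf}), so the only thing left to transport along $P_\Phi$ is the closure structure. The plan is to unwind the definition of closure under a type former from \cite[Definition A.16]{shulman2019inftytoposunivalent} and observe that each ingredient is assembled from structure that $P_\Phi$ is already known to preserve. Fix one of the type formers $T \in \{\Sigma, \Pi, \mathrm{Id}, +\}$. The data exhibiting the $\cL$-family $\{U_\alpha\}_{\alpha \in \cL}$ as closed under $T$ consists, for each admissible tuple of levels, of a code-forming operation in the canonical natural pseudo-model of $\cM$ together with a strict equality identifying the fibration classified by the output code with the result of applying the semantic counterpart of $T$ to the fibrations classified by the input codes. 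Here the semantic $\Sigma$ is composition of a display map with a fibration; the semantic $\Pi$ is the dependent product of a fibration along another (available because fibrations are exponentiable); the semantic identity type is the chosen factorization of a relative diagonal as a trivial cofibration followed by a fibration; and the semantic binary sum is the fibrewise coproduct of two fibrations over a common base.

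First I would record that $P_\Phi$ preserves all of these semantic operations. By \cref{prop:filter quotient category}, $P_\Phi$ preserves finite limits, finite colimits, local exponentiability and the locally Cartesian closed structure; by \cref{thm:filter quotient model structure}, it preserves fibrations, cofibrations and weak equivalences, and hence also factorizations, since a factorization in $\cM_\Phi$ is obtained by projecting a factorization of a representative in some $\cM_U$; and by \cref{prop:univalent universe} together with \cref{lemma:filter quotient trivial fibration}, the acyclic fibration $\cM_\Phi(-,(U_\alpha)_\Phi) \to \bF_\Phi$ classifying $(U_\alpha)_\Phi$ is precisely the $P_\Phi$-image of the one classifying $U_\alpha$. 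Consequently, the semantic $T$ computed in $\cM_\Phi$ on the image of a fibration agrees with the $P_\Phi$-image of the semantic $T$ computed in $\cM$, and the corresponding statement holds for the universe classification data.

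Then I would apply $P_\Phi$ to the code-forming operations and to the strict equalities witnessing closure of $\{U_\alpha\}$ under $T$; by the preservation statements above this produces code-forming operations and strict equalities exhibiting $\{(U_\alpha)_\Phi\}$ as closed under $T$, which is the claim. The step I expect to require the most care is the preservation of \emph{strictness}: closure under $T$ demands genuine equalities, not merely coherent isomorphisms. This is exactly where the description of $\cM_\Phi$ as the filtered colimit $\colim_{U \in \Phi^{op}} \cM_U$ from \cref{lemma:filtered colimits filter quotients} is used: filtered colimits of categories commute with finite limits, so equalities of morphisms — and the coherences asserting that the semantic operations are strictly stable under reindexing — survive in the colimit. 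A minor additional point is the level bookkeeping, namely that the output code for input codes at levels $\beta \leq \alpha$ still lands in $U_\alpha$; this is immediate, since $P_\Phi$ is the identity on the underlying set of levels and preserves the relevant fibrations and classifying maps.
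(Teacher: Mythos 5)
Your argument is correct in substance and relies on the same preservation inputs as the paper (\cref{prop:filter quotient category}, \cref{thm:filter quotient model structure}, \cref{prop:univalent universe}, \cref{lemma:induced pseudo-functor}), but the execution differs in how much of \cite[Definition A.16]{shulman2019inftytoposunivalent} gets unwound. The paper never decomposes closure into ``code-forming operations plus strict equalities with semantic type formers''; it observes that closure under, say, $\Pi$-types \emph{is} the existence of a lift in a fixed diagram of pseudo-functors involving $\Ty^\Pi$, a surjection $\bG \twoheadrightarrow \Ty^\Pi$, $\Ty$, and the representables of the universes, and then simply applies the induced-pseudo-functor construction $(-)_\Phi$ to the entire diagram, lift included. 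The only facts needed are that $P_\Phi$ preserves fibrations (so $\Ty$ and $\Ty^\Pi$ go to the right pseudo-functors), preserves surjectivity, and preserves universes. Your route instead asserts that $P_\Phi$ commutes on the nose with each semantic type former (dependent product, identity-type factorization, etc.); this is the most delicate point of your write-up, since dependent products and factorizations are only determined up to isomorphism, and your appeal to ``filtered colimits commute with finite limits'' does not by itself deliver the strictness you flag as the main concern. The claim can be repaired — because the closure witness in \cite[Definition A.16]{shulman2019inftytoposunivalent} is packaged as a choice of structure (the lift into $\bG$) rather than an equality against a globally fixed operation on $\cM_\Phi$, projecting the chosen structure suffices, and $P_\Phi$ preserves the universal properties needed for the projected structure to qualify — but the paper's diagrammatic formulation sidesteps this issue entirely, which is what makes its proof shorter. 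So: same key lemmas, correct conclusion, but your decomposition buys concreteness at the price of a strictness discussion that the paper's lift-preservation argument renders unnecessary.
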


\begin{proof}
 We will consider the case of $\Pi$-types and the other cases follow similarly. By \cref{thm:filter quotient model structure}, $P_\Phi$ preserves fibrations, meaning it preserves both $\Ty$ (whose objects are fibrations) and $\Ty^\Pi$ (whose objects are pairs of fibrations, as defined in \cite[Below Remark A.8]{shulman2019inftytoposunivalent}). Moreover, $P_\Phi$ preserves surjectivity. Finally, by \cref{prop:univalent universe}, $P_\Phi$ preserves universes. Hence, applying $P_\Phi$ to the diagram in \cite[Definition A.16]{shulman2019inftytoposunivalent} for all $\alpha, \beta \in \cL$ we obtain the following diagram of functors from $\cM_\Phi^{op}$ to groupoids
	\[
	\begin{tikzcd}
	 \cM_\Phi(-,(\diamond\centerdot  P_\Phi U_\alpha) \triangleleft(\diamond\centerdot P_\Phi U_\beta)) \arrow[d] \arrow[dr, dashed] \arrow[rr, dotted] & & \cM_\Phi(-,\diamond\centerdot P_\Phi U_{\alpha\vee \beta}) \arrow[d] \\ 
		\Ty^\Pi & \bG \arrow[l, twoheadrightarrow] \arrow[r] & \Ty 
	\end{tikzcd}
	\]
 Given that the lift is preserved as part of the diagram, $P_\Phi$ preserves $\cL$-families of pseudo Tarski universes closed under $\Pi$-types (which simply means the lift to $\bG$ exists).
\end{proof}
	
\begin{theorem} \label{thm:u}
 Let $\cM$ be a model category, $\Phi$ a model filter. Then $P_\Phi\colon	\cM \to \cM_\Phi$ preserves \hyperlink{item:u}{\textbf{U}}. 
\end{theorem}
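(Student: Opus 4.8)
The plan is to unwind \cref{def:u} clause by clause and verify that $\cM_\Phi$, equipped with the induced data, satisfies each one, assembling the preservation statements already proved in this subsection together with \cref{prop:filter quotient category} and \cref{thm:filter quotient model structure}.

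First I would dispose of the structural clauses. By \cref{prop:filter quotient category}, $\cM_\Phi$ is again locally Cartesian closed, and by \cref{prop:induced lparanofscaf} the induced pseudo-functor $\bF_\Phi$ (\cref{def:pseudo-functor phi}) is a LPaRANoFScaF on $\cM_\Phi$. A level structure is a purely combinatorial datum attached to the index set $\cL$, so the same $\cL$ and the same level structure serve for $\cM_\Phi$; and since $P_\Phi$ is a functor, applying it to the comparison maps between the $U_\alpha$ yields an $\cL$-indexed family $\{P_\Phi U_\alpha\}_{\alpha\in\cL}$ with the identical compatibilities.

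Next I would handle the universe clauses. For each $\alpha\in\cL$, \cref{prop:univalent universe} gives that $P_\Phi U_\alpha$ is a fibrant univalent universe for $\bF_\Phi$ still containing the empty type, the unit type, the natural numbers type, the sphere types $S^n$, and the other ``cell complex'' types, and \cref{prop:closed universe} gives that it remains strictly closed under $\Sigma$-types, $\Pi$-types, identity types, and binary sum types. So the only clause left is that the $P_\Phi U_\alpha$ form an $\cL$-family of strict Tarski universes in the canonical natural pseudo-model of $\cM_\Phi$ that classify all fibrations.

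The hard part will be exactly this last clause, and the plan there is threefold: (i) identify the induced pseudo-functor of the canonical natural pseudo-model $\bFib\times_{\bM}\bM_\bullet\to\bFib$ of $\cM$ with the canonical natural pseudo-model $\bFib_\Phi\times_{\bM_\Phi}(\bM_\Phi)_\bullet\to\bFib_\Phi$ of $\cM_\Phi$, which should follow from \cref{lemma:induced pseudo-functor} together with the fact that filtered colimits commute with the finite limits defining pointed objects and the defining pullback; (ii) transport the strict Tarski (decoding) structure and the $\cL$-family structure along $P_\Phi$, using \cref{prop:univalent universe}, \cref{prop:closed universe}, and functoriality of $P_\Phi$; and (iii) check the ``classifies all fibrations'' condition in $\cM_\Phi$, for which the key input is \cref{thm:filter quotient model structure}: every fibration of $\cM_\Phi$ is represented by a fibration $f\times U$ of $\cM$ for some $U\in\Phi$, hence lies in the image of $P_\Phi$, so classification in $\cM$ forces classification in $\cM_\Phi$. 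Once these three points are settled, $\cM_\Phi$ satisfies every clause of \cref{def:u}; I expect step (i) — pinning down that the filtered colimit genuinely produces the canonical natural pseudo-model of $\cM_\Phi$, rather than merely a sub-pseudo-functor of it — to be the most delicate piece of bookkeeping.
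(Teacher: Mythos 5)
Your proposal is correct and follows essentially the same route as the paper's proof: local Cartesian closure via \cref{prop:filter quotient category}, the LPaRANoFScaF via \cref{prop:induced lparanofscaf}, the same level structure $\cL$, fibrant univalent universes via \cref{prop:univalent universe}, and closure under type formers via \cref{prop:closed universe}. Your steps (i)--(iii) on the strict Tarski/classification clause are sound and in fact spell out bookkeeping that the paper compresses into the single remark that $P_\Phi$ preserves the pullback squares appearing in the definition of strict Tarski universes (together with the fact that every fibration in $\cM_\Phi$ is represented by one of the form $f\times U$ in $\cM$).
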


\begin{proof}
	First, $\cM_\Phi$ is locally Cartesian closed by \cref{prop:filter quotient category}. Next, by \cref{prop:induced lparanofscaf}, $P_\Phi$ maps the LPaRANoFScaF $\bF$ to the LPaRANoFScaF  $\bF_\Phi$. Next, we take the same level structure $\cL$ given for $\cM$. By \cref{prop:univalent universe}, $P_\Phi(U_\alpha)$ is a fibrant univalent universe. Moreover, by \cref{prop:closed universe}, the family of universes $P_\Phi(U_\alpha)$ is closed under the same type formers. Finally, by \cref{prop:filter quotient category}, $P_\Phi$ preserves pullback squares, and so in particular the pullbacks involved in the definition of strict Tarski universes. Hence, we are done. 
\end{proof}

We can now apply this general result to our case of interest. First of all we have the following lemma, stated as part of \cite[Theorem A.26]{shulman2019inftytoposunivalent}.

\begin{lemma} \label{lemma:ttmt u}
 Let $\cM$ be a type-theoretic model topos. Then $\cM$ satisfies \hyperlink{item:u}{\textbf{U}}. 
\end{lemma}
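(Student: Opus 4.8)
The plan is to verify the five bullet points of \cref{def:u} one at a time, in each case invoking the defining properties of a type-theoretic model topos together with the results assembled in \cite[Appendix A]{shulman2019inftytoposunivalent}.

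First, local Cartesian closure is part of the definition of a type-theoretic model topos: the properties \hyperlink{item:gt}{\textbf{GT}} and \hyperlink{item:slc}{\textbf{SLC}} are assumed, and a Grothendieck topos is in particular locally Cartesian closed, so the first bullet is immediate. Second, the existence of a locally representable and relatively acyclic notion of fibred structure $\bF$ with $|\bF| = \bFib$ is literally the property \hyperlink{item:f}{\textbf{F}} from \cref{not:main}, so we take this $\bF$.

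For the remaining three bullets I would invoke Shulman's universe construction. Because a type-theoretic model topos is locally presentable (\hyperlink{item:lp}{\textbf{LP}}) and cofibrantly generated (\hyperlink{item:cg}{\textbf{CG}}), \cite[Theorem 5.22]{shulman2019inftytoposunivalent} applies and produces, for each sufficiently large regular cardinal $\kappa$, a fibrant univalent universe $U_\kappa$ classifying the $\kappa$-small fibrations; one takes $\cL$ to be the resulting tower of cardinals, equipped with the level structure of \cite[Definition A.12]{shulman2019inftytoposunivalent}. That each $U_\kappa$ is strictly closed under $\Sigma$-types, $\Pi$-types, identity types and binary sum types, and contains the empty, unit and natural numbers types together with the sphere and torus types and other cell-complex types, uses in addition that a type-theoretic model topos is in particular an excellent model category (note \hyperlink{item:cem}{\textbf{CEM}} gives \hyperlink{item:cim}{\textbf{CIM}}, and monomorphisms are limit-stable in a topos, giving \hyperlink{item:cl}{\textbf{CL}}), so that the higher inductive types of \cite{lumsdaineshulman2020goodexcellent} exist and, for $\kappa$ large enough, lie inside $U_\kappa$. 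Finally, that the family $(U_\alpha)_{\alpha \in \cL}$ assembles into an $\cL$-family of strict Tarski universes in the canonical natural pseudo-model of $\cM$ that classifies all fibrations is exactly the content of \cite[Theorem A.26]{shulman2019inftytoposunivalent}.

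Since every ingredient is already available in the literature, there is no genuine obstacle here; the only point needing mild care is the size bookkeeping, namely checking that the cell-complex types produced by the small object argument are $\kappa$-small for the cardinals $\kappa$ appearing in the tower $\cL$, and this is carried out in \cite[Appendix A]{shulman2019inftytoposunivalent}.
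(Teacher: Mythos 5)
Your proposal is correct and follows essentially the same route as the paper: the paper's proof is simply the observation that the statement is contained in \cite[Theorem A.26]{shulman2019inftytoposunivalent}, which is exactly the result you invoke (together with the ingredients, such as \cite[Theorem 5.22]{shulman2019inftytoposunivalent}, that go into it). Your bullet-by-bullet verification is just an unwinding of that citation, so there is no substantive difference.
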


\begin{corollary} 
 Let $\cM$ be a type-theoretic model topos, $\Phi$ a simplicial model filter. Then $\cM_\Phi$ satisfies \hyperlink{item:u}{\textbf{U}}. Moreover, $P_\Phi$ preserves every universe that exists in $\cM$. 
\end{corollary}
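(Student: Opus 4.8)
The plan is to assemble two results already proved in this section. First I would invoke \cref{lemma:ttmt u}, which records (as part of \cite[Theorem A.26]{shulman2019inftytoposunivalent}) that every type-theoretic model topos $\cM$ satisfies \hyperlink{item:u}{\textbf{U}}: it is locally Cartesian closed, it carries a LPaRANoFScaF $\bF$, a level structure $\cL$, and for each $\alpha \in \cL$ a fibrant univalent universe $U_\alpha$ that is strictly closed under $\Sigma$-types, $\Pi$-types, identity types and binary sum types, contains the expected ``cell complex'' types, and assembles into an $\cL$-family of strict Tarski universes classifying all fibrations.

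Next I would note that a simplicial model filter is in particular a model filter, so the hypotheses of \cref{thm:u} are satisfied by $\cM$ and $\Phi$. Applying \cref{thm:u} then gives at once that $P_\Phi\colon \cM \to \cM_\Phi$ preserves \hyperlink{item:u}{\textbf{U}}, hence that $\cM_\Phi$ satisfies \hyperlink{item:u}{\textbf{U}}. If one wishes to be explicit, one unwinds the proof of \cref{thm:u}: $\cM_\Phi$ is locally Cartesian closed by \cref{prop:filter quotient category}; the induced pseudo-functor $\bF_\Phi$ of \cref{def:pseudo-functor phi} is a LPaRANoFScaF by \cref{prop:induced lparanofscaf}; the level structure $\cL$ transfers verbatim; each $P_\Phi(U_\alpha)$ is a fibrant univalent universe by \cref{prop:univalent universe}, closed under the same type formers by \cref{prop:closed universe}; and the pullbacks defining the strict Tarski structure are preserved by $P_\Phi$ again by \cref{prop:filter quotient category}.

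For the ``moreover'' clause I would appeal directly to \cref{prop:univalent universe}: parts (1)--(4) assert respectively that $P_\Phi$ preserves universes, the fibrancy of a universe, its univalence, and every type it contains, so any universe of $\cM$ is carried to a universe of $\cM_\Phi$ with all the same structure. I do not expect any genuine obstacle here: the corollary is purely a bookkeeping consequence of \cref{thm:u,lemma:ttmt u,prop:univalent universe}. The only point worth verifying is the compatibility of hypotheses --- that ``simplicial model filter'' delivers exactly what \cref{thm:u} requires, namely that $\Phi$ be a model filter --- which is immediate from the definition, the ``simplicial'' adjective being needed elsewhere (as in \cref{thm:prop}) only for the preservation of \hyperlink{item:s}{\textbf{S}} and \hyperlink{item:ia}{\textbf{IA}}.
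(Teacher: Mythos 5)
Your proposal is correct and follows essentially the same route as the paper: the first claim is obtained by combining \cref{lemma:ttmt u} with \cref{thm:u} (a simplicial model filter being in particular a model filter), and the ``moreover'' clause is exactly \cref{prop:univalent universe}. The extra unwinding of \cref{thm:u} is harmless but not needed.
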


\begin{proof}
	The first part follows from applying \cref{thm:u} to \cref{lemma:ttmt u}. The second part follows from \cref{prop:univalent universe}.
\end{proof}

\subsection{The Proofs} \label{subsec:proof}
We are now ready to combine all the previous steps and prove the two main results. 

\begin{proof}[Proof of \cref{thm:prop}]
	We analyze the properties separately.
	\begin{itemize}[leftmargin=*]
	\item	\hyperlink{item:flc}{\textbf{FLC}}, \hyperlink{item:et}{\textbf{ET}}, \hyperlink{item:lc}{\textbf{LC}}, \hyperlink{item:slc}{\textbf{SLC}}, \hyperlink{item:rp}{\textbf{RP}}: Follow directly from \cref{prop:filter quotient category}.  
	\item \hyperlink{item:s}{\textbf{S}}: Follows from \cref{thm:filter quotient model structure} and the assumption that $\Phi$ is simplicial. 
	\item \hyperlink{item:cim}{\textbf{CIM}}, \hyperlink{item:cem}{\textbf{CEM}}: By \cref{prop:filter quotient category}, $P_\Phi$ preserves monos, and by \cref{thm:filter quotient model structure} $P_\Phi$ preserves cofibrations. Hence every mono in $\cM_\Phi$ is either a cofibration for the first case, or precisely a cofibration, for the second case.
	\item \hyperlink{item:fe}{\textbf{FE}}, \hyperlink{item:tcp}{\textbf{TCP}}: By \cref{prop:filter quotient category}, $P_\Phi$ preserves exponentiability and pullbacks, and by \cref{thm:filter quotient model structure}, $P_\Phi$ preserves fibrations and trivial fibrations.
	\item \hyperlink{item:cl}{\textbf{CL}}: By \cref{prop:filter quotient category}, $P_\Phi$ preserves all limits in $\C_\Phi$, and by \cref{thm:filter quotient model structure}, $P_\Phi$ preserves cofibrations.
	\item \hyperlink{item:ia}{\textbf{IA}}: Proven in \cref{thm:ia}.
	\item \hyperlink{item:u}{\textbf{U}}: Proven in \cref{thm:u}.\qedhere
	\end{itemize}
\end{proof}

\begin{proof}[Proof of \cref{thm:main}] Throughout the proof assume $\cM$ is a model category and $\Phi$ a simplicial model filter. 
	\begin{enumerate}[leftmargin=*]
		\item Let $\cM$ be a logical model category, defined via the properties \hyperlink{item:flc}{\textbf{FLC}}, \hyperlink{item:tcp}{\textbf{TCP}}, and \hyperlink{item:fe}{\textbf{FE}} \cite[Definition 24]{arndtkapulkin2011modelstypetheory}. Then, by \cref{thm:prop}, $\cM_\Phi$ satisfies \hyperlink{item:flc}{\textbf{FLC}}, \hyperlink{item:tcp}{\textbf{TCP}}, and \hyperlink{item:fe}{\textbf{FE}}, meaning it is also a logical model category. Hence, $\cM_\Phi$ models the unit type, $\Sigma$-types and $\Pi$-types, by \cite[Theorem 26]{arndtkapulkin2011modelstypetheory}.
		\item Let $\cM$ be a type-theoretic model category, defined via the properties \hyperlink{item:lc}{\textbf{LC}}, \hyperlink{item:cl}{\textbf{CL}}, \hyperlink{item:rp}{\textbf{RP}}, and \hyperlink{item:fe}{\textbf{FE}}  \cite[Definition 2.12]{shulman2015homotopycanonicity}. Then, by \cref{thm:prop}, $\cM_\Phi$ satisfies \hyperlink{item:flc}{\textbf{FLC}}, \hyperlink{item:cl}{\textbf{CL}}, \hyperlink{item:rp}{\textbf{RP}}, and \hyperlink{item:fe}{\textbf{FE}}, meaning it satisfies all conditions of a type-theoretic model category except for infinite colimits. Following the proof of \cite[Proposition 2.13]{shulman2015homotopycanonicity}, it is still the case that its full subcategory	of fibrant objects is a type-theoretic fibration category \cite[Definition 2.1]{shulman2015homotopycanonicity}, hence still models identity types and function extensionality along with the type constructors from the previous item, by \cite[Section 4.2]{shulman2015homotopycanonicity}. 
		\item	Let $\cM$ be a good model category, defined via the properties \hyperlink{item:lc}{\textbf{LC}}, \hyperlink{item:s}{\textbf{S}}, \hyperlink{item:cim}{\textbf{CIM}}, \hyperlink{item:cl}{\textbf{CL}}, \hyperlink{item:rp}{\textbf{RP}}, and \hyperlink{item:slc}{\textbf{SLC}} \cite[Definition 2.1]{lumsdaineshulman2020goodexcellent}. Then, by \cref{thm:prop},  $\cM_\Phi$ satisfies \hyperlink{item:flc}{\textbf{FLC}}, \hyperlink{item:s}{\textbf{S}}, \hyperlink{item:cim}{\textbf{CIM}}, \hyperlink{item:cl}{\textbf{CL}}, \hyperlink{item:rp}{\textbf{RP}}, and \hyperlink{item:slc}{\textbf{SLC}}, meaning it satisfies all conditions of a good model category except for infinite colimits. Following the constructions and proofs in \cite[Section 3 - 6]{lumsdaineshulman2020goodexcellent}, the category of fibrant objects is still a type-theoretic fibration category which permits coproduct and pushout types, hence in particular includes various ``cell complex'' types, such as spheres and tori, along with all the type constructors from	the previous items. 
		\item Let $\cM$ be an excellent model category, defined via the properties \hyperlink{item:lc}{\textbf{LC}}, \hyperlink{item:s}{\textbf{S}}, \hyperlink{item:cim}{\textbf{CIM}}, \hyperlink{item:cl}{\textbf{CL}}, \hyperlink{item:rp}{\textbf{RP}}, \hyperlink{item:slc}{\textbf{SLC}}, \hyperlink{item:lp}{\textbf{LP}}, and \hyperlink{item:cg}{\textbf{CG}} \cite[Definition 2.1]{lumsdaineshulman2020goodexcellent}. Then, by \cite[Theorem 13.1]{lumsdaineshulman2020goodexcellent}, $\cM$ also satisfies \hyperlink{item:ia}{\textbf{IA}}. Thus, by \cref{thm:prop}, $\cM_\Phi$ satisfies \hyperlink{item:flc}{\textbf{FLC}}, \hyperlink{item:s}{\textbf{S}}, \hyperlink{item:cim}{\textbf{CIM}}, \hyperlink{item:cl}{\textbf{CL}}, \hyperlink{item:rp}{\textbf{RP}}, \hyperlink{item:slc}{\textbf{SLC}}, and \hyperlink{item:ia}{\textbf{IA}}. Finally, following the constructions and proofs in \cite[Section 3 - 8, 10]{lumsdaineshulman2020goodexcellent}, in addition to the previous constructors, the underlying category of fibrant objects of $\cM_\Phi$ also models various higher inductive types, such as a natural number type, $W$-types, propositional truncations, James constructions, and localizations.
		\item Let $\cM$ be a type-theoretic model topos, defined via the properties \hyperlink{item:lc}{\textbf{LC}}, \hyperlink{item:gt}{\textbf{GT}}, \hyperlink{item:rp}{\textbf{RP}}, \hyperlink{item:s}{\textbf{S}}, \hyperlink{item:cem}{\textbf{CEM}}, \hyperlink{item:lp}{\textbf{LP}}, \hyperlink{item:cg}{\textbf{CG}}, \hyperlink{item:slc}{\textbf{SLC}}, and \hyperlink{item:f}{\textbf{F}} \cite[Definition 6.1]{shulman2019inftytoposunivalent}. Then, $\cM$ also satisfies \hyperlink{item:et}{\textbf{ET}} (by \cite[Corollary III.7.4]{maclanemoerdijk1994topos}), \hyperlink{item:ia}{\textbf{IA}} (by \cite[Theorem 13.1]{lumsdaineshulman2020goodexcellent}) and \hyperlink{item:u}{\textbf{U}} (by \cref{lemma:ttmt u}). Thus, by \cref{thm:prop}, $\cM_\Phi$ satisfies \hyperlink{item:flc}{\textbf{FLC}}, \hyperlink{item:et}{\textbf{ET}}, \hyperlink{item:rp}{\textbf{RP}}, \hyperlink{item:s}{\textbf{S}}, \hyperlink{item:cem}{\textbf{CEM}}, \hyperlink{item:slc}{\textbf{SLC}}, \hyperlink{item:ia}{\textbf{IA}}, and \hyperlink{item:u}{\textbf{U}}. Hence, $\cM_\Phi$ models all previously mentioned type constructors, as well as, by \cref{cor:u univalent universe}, arbitrarily large strict univalent universes, in the sense of \cref{def:u}. \qedhere
	\end{enumerate}
\end{proof}

\bibliographystyle{alpha}
\bibliography{main}

\end{document}